\let\over\@@over\makeatother
\numberwithin{equation}{section}
\theoremstyle{plain} 
\newtheorem{theorem}{Theorem}[section] 
\newtheorem{corollary}[theorem]{Corollary}
\newtheorem{lemma}[theorem]{Lemma} 
\theoremstyle{remark}
\newtheorem{remark}[theorem]{Remark}
\theoremstyle{definition}
\newtheorem{definition}[theorem]{Definition}
\newcommand{\be}{\begin{equation}}
\newcommand{\ee}{\end{equation}}%
\newcommand{\bse}{\begin{subequations}}
\newcommand{\ese}{\end{subequations}}
\newcommand{\supp}[1]{\operatorname{supp}{#1}}
\newcommand{\jump}[1]{\left\llbracket{#1}\right\rrbracket}
\newcommand{\id}{\operatorname{id}}
\newcommand{\inter}{\operatorname{int}}
\newcommand{\R}{\mathbb{R}} 
\newcommand{\placeholder}{\,\cdot\,}
\newcommand{\by}{\times}
\newcommand{\abs}[2][]{#1\lvert #2 #1\rvert}
\newcommand{\ina}{\textup{~in~}}
\newcommand{\ona}{\textup{~on~}}
\newcommand{\bdd}{\mathrm{b}}       
\newcommand{\loc}{{\mathrm{loc}} }
\newcommand{\Stokesu}{{u_{\textup{S}}}}
\newcommand{\cm}{{\mathscr C}}
\newcommand\fluidD{\mathscr{D}}
\newcommand\fluidS{\mathscr{S}}
\newcommand{\fluidB}{\mathscr{B}}
\newcommand{\fluidT}{\mathscr{T}}
\newcommand{\extD}{\mathscr{R}}
\newcommand\Lip{\operatorname{Lip}}
\newcommand{\gc}{\mathrm{gc}}
\newcommand{\cme}{\cm_{\textup{elev}}}
\newcommand{\cmd}{\cm_{\textup{depr}}}
\begin{document}

\title[Extreme internal waves]{Extreme internal waves: gravity currents and overturning fronts}

\date{\today}

\author[R. M. Chen]{Robin Ming Chen}
\address{Department of Mathematics, University of Pittsburgh, Pittsburgh, PA 15260} 
\email{mingchen@pitt.edu}  

\author[S. Walsh]{Samuel Walsh}
\address{Department of Mathematics, University of Missouri, Columbia, MO 65211} 
\email{walshsa@missouri.edu} 

\author[M. H. Wheeler]{Miles H. Wheeler}
\address{Department of Mathematical Sciences, University of Bath, Bath BA2 7AY, United Kingdom}
\email{mw2319@bath.ac.uk}

\begin{abstract}
Hydrodynamic bores are front-type traveling wave solutions to the two-layer free boundary Euler equations in two dimensions. The velocity field in each layer is assumed to be incompressible and irrotational, and it limits to distinct laminar flows upstream and downstream. Rigid horizontal boundaries confine the fluids from above and below. A constant gravitational force acts on the waves, but surface tension is neglected.  It was recently shown by the authors~\cite{chen2023global} that there exist two large-amplitude families of hydrodynamic bores: a curve of depression bores $\cmd$ and a curve of elevation bores $\cme$. 

We now prove that in the limit along $\cme$, the solutions must overturn: the interface separating the layers develops a vertical tangent. This type of behavior was first observed over $45$ years ago in numerical computations of internal gravity waves and gravity water waves with vorticity. Despite considerable progress over the past decade in constructing families of water waves that potentially overturn, a proof that overturning definitively occurs has been stubbornly elusive.  We further show that in the limit along $\cmd$, either overturning occurs or the solutions converge to a gravity current: the free boundary contacts the upper wall and the relative velocity in the upper fluid is stagnant. 
We also determine the contact angle between the interface and the rigid barrier for the limiting gravity current, giving the first rigorous confirmation of a conjecture of von Kármán.

The resolutions of these questions in the specific case of hydrodynamic bores is accomplished through the use of novel geometric analysis techniques, including bounds on the decay of the velocity field near a hypothetical double stagnation point. These ideas may have broader applications to bifurcation theoretic studies of large-amplitude waves.

\end{abstract}

\maketitle

\setcounter{tocdepth}{1}
\tableofcontents

\section{Introduction}

The cresting and collapsing of water waves is a dramatic if familiar sight in nature. Mathematically, though, water waves are typically modeled by the incompressible free boundary Euler equations, and, in the presence of vorticity, this system allows for \emph{steady overhanging waves}: classical solutions that do not break but simply translate at a fixed velocity remaining overturned for all time. Numerical evidence for overhanging gravity water waves goes back to the late 1970s. Holyer~\cite{holyer1979large} computed overhanging internal waves propagating along an interface dividing two immiscible fluid layers. This was accomplished by making a high-order Fourier series expansion, then solving the resulting recurrence relation for the coefficients via computer algebra. Later, starting at a laminar flow where the interface is perfectly flat and using numerical continuation methods, Meiron and Saffman~\cite{meiron1983overhanging}, Pullin and Grimshaw \cite{pullin1988finite}, and Turner and Vanden-Broeck~\cite{turner1988broadening} likewise obtained overhanging {internal waves}, while Teles da Silva and Peregrine~\cite{dasilva1988steep} computed overhanging surface water waves with constant vorticity.  Investigations in this vein have continued to the present; see, for example, \cite{dias2003internal,maklakov2018almost,maklakov2020note,hur2020exact,guan2021local,hur2022overhanging}. Reproducing these findings rigorously is among the largest open problems in water waves. The appeal of overhanging waves is not primarily due to physical relevance, as they would necessarily be highly unstable, but rather that the existence of such strikingly nonlinear and counterintuitive solutions vividly illustrates the richness of the steady water wave problem itself. 

In a breakthrough paper, Constantin, Vărvăruca, and Strauss~\cite{constantin2016global} constructed families of periodic waves with constant vorticity in a framework that admits overturning. This was improved to general vorticity by Wahlén and Weber~\cite{wahlen2024large}. Haziot~\cite{haziot2021stratified} proved an analogous result for waves with linear density stratification, while Haziot and Wheeler~\cite{haziot2023large} treated the case of solitary waves with constant vorticity. Finally, solitary waves carrying point vortices or hollow vortices were constructed by Chen, Varholm, Walsh, and Wheeler~\cite{chen2025vortex}. Numerical results~\cite{dyachenko2019folds,dyachenko2019stokes,chen2025vortex} suggests that many of these solutions do indeed overturn, but a rigorous proof has not yet been achieved. The central issue is that, while global bifurcation theory is a powerful tool for constructing large continua of solutions, it must be supplemented with highly nontrivial qualitative theory in order to fully determine the limiting behavior. It is extremely difficult to say whether a particular global branch of solutions will exhibit overturning, say, or first develop a surface singularity such as a corner. 

With that in mind, another set of papers have adopted the opposite approach: instead of beginning at a laminar flow and using global bifurcation, they construct overhanging waves directly. One tack has been to take an explicit gravity-less overhanging wave and introduce perturbative gravity~\cite{akers2014gravity,cordoba2016existence,hur2020exact,hur2022overhanging,carvalho2023gravity}. The task is then to fully understand the linearized steady water wave problem at an overhanging wave, which is certainly harder than working near laminar flows, but avoids any global continuation. Quite recently, Dávila, del Pino, Musso, and Wheeler~\cite{davila2024overhanging} gave a gluing method construction of overhanging solitary waves with small gravity. Like those predicted in~\cite{dasilva1988steep}, the fluid domain consists of a nearly circular constant vorticity region that is perched on top of a body of nearly laminar flow. An exceedingly delicate fixed point argument is used to match the velocity fields together in a thin neck region. Collectively, these results show that overhanging waves exist, but they leave unresolved some interesting questions. First, they all work within the weak gravity regime, whereas the numerics suggest overhanging waves are possible even with $O(1)$ gravity. 
Moreover, these constructions cleverly sidestep the need to discern overturning along a solution branch. But overcoming this problem directly is arguably just as important as establishing the existence of overhanging waves as it represents a fundamental obstacle to global bifurcation theoretic methods more broadly.

In this work, we study two familes of large-amplitude front-type solutions to the internal wave problem. These are called (smooth) \emph{hydrodynamic bores}, and were obtained via global bifurcation theory in~\cite{chen2023global}. One curve $\cme$ consists of elevation bores, for which the height of the internal interface is minimized upstream, and the other curve $\cmd$ is of depression bores, for which the height is maximized upstream. Earlier numerics by Dias and Vanden-Broeck~\cite{dias2003internal} indicate that overturning invariably occurs as one follows  $\cme$.  We are now able to give a rigorous proof. The original paper~\cite{chen2023global} left open two possibilities: overturning or the development of a ``double stagnation'' degeneracy wherein the relative velocity field in both fluids tends to $0$ at some common point on the interface.  Using ideas from the geometric analysis of free boundary elliptic equations, most notably an estimate on the decay of the velocity near such surface stagnation points, the degeneracy alternative can be excluded, leaving only overturning. While there is a natural connection between free boundary regularity techniques and the qualitative theory of water waves, historically the two fields have been largely isolated. In part, this is because in the neighborhood of a surface stagnation point, the gravity water wave problem is beyond the reach of much of the classical literature on Bernoulli free boundary problems, though there are several notable recent exceptions~\cite{shargorodsky2008bernoulli,varvaruca2011geometric,kriventsov2025minmax}. 

For the depression bore family $\cmd$, the numerics in~\cite{dias2003internal} instead suggest that the interface approaches the upper wall. This coincides with the entire upper fluid region becoming stagnant --- that is, it appears stationary in a frame moving with the front --- corresponding to a so-called gravity current configuration. These have been the subject of a large number of works in the applied fluid mechanics literature, yet no exact solutions have been constructed previously. Gravity currents featured in von Kármán's~\cite{vonkarman1940engineer} influential lecture on nonlinear phenomena with salience to engineering. He asserted that, if the lighter fluid has a stagnation point where the interface meets the wall, then the contact angle must be precisely $60\degree$. In that connection, we prove that following $\cmd$, either overturning occurs or else there is a limiting gravity current solution. Via geometric methods, we give a rigorous proof of von Kármán's conjectured contact angle for these waves, as well as more general solutions to the gravity current problem.

\subsection{The internal wave problem}

\begin{figure}
	\centering
	\includegraphics{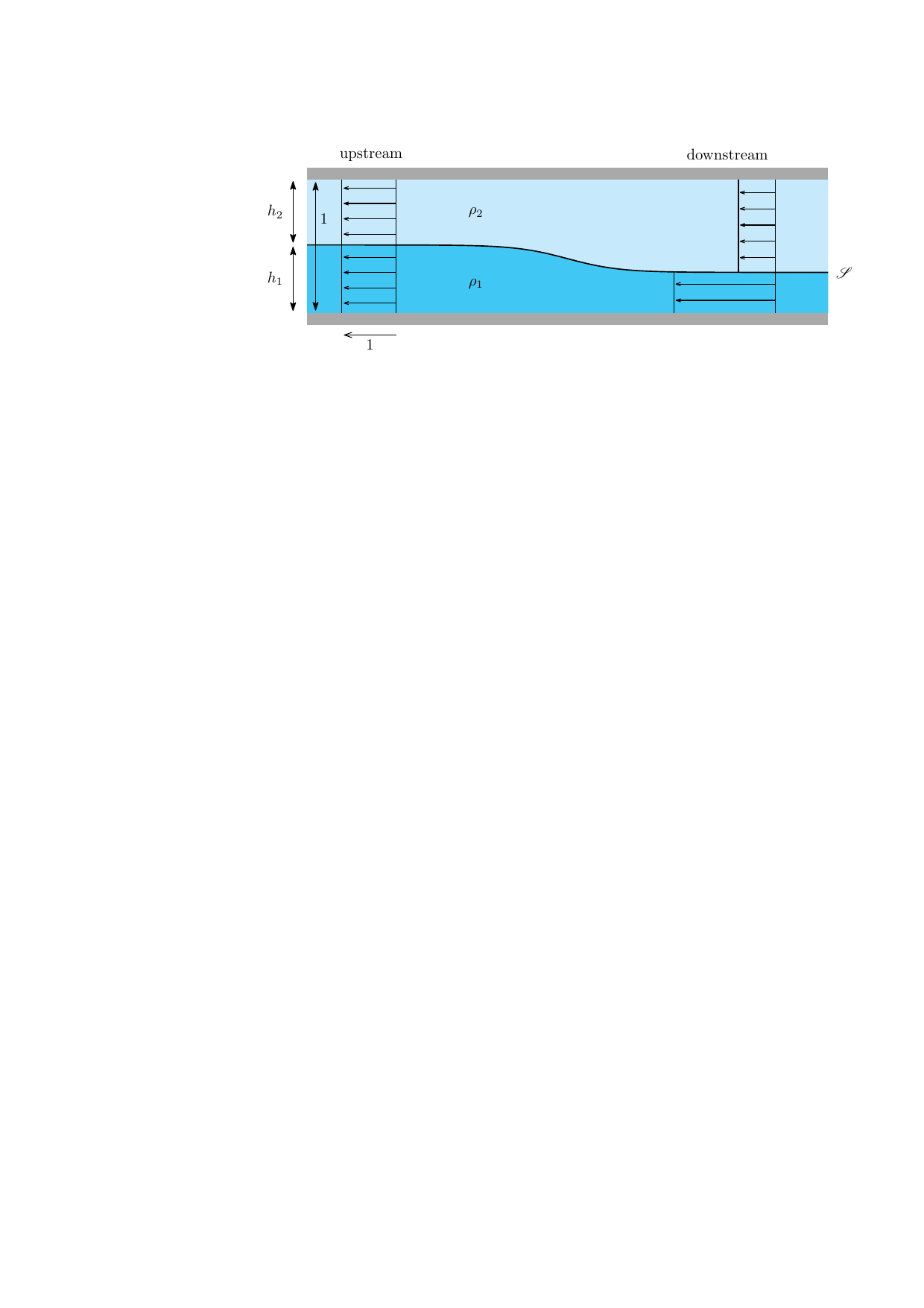}
	\caption{A steady hydrodynamic bore viewed in a frame moving with the front. The channel extends from $y=-h_1$ to $y = h_2$, with the heavier fluid layer (shaded in darker blue) $\fluidD_2$ lying below the lighter fluid layer $\fluidD_1$. Both upstream and downstream, the limiting velocity field is purely horizontal. Note that the upstream velocity is the same in both layers, here normalized to $(-1,0)$, whereas there will be a jump in the downstream velocity.}
	\label{bore problem setup figure}
\end{figure}

In density stratified bodies of water, it is typical to have large regions of nearly constant density separated by pycnoclines or thermoclines, which are thinner regions where the density varies rapidly. A common model is to replaces the pycnoclines with sharp material interfaces that now divide immiscible constant density fluid layers. \emph{Internal waves} are traveling waves that propagate along these interfaces. 

Consider a stably stratified two-layer configuration in which a lighter fluid with constant density $\rho_2 > 0$ lies atop a heavier fluid with constant density $\rho_1 > \rho_2$. In the ocean, for instance, the densities of the two layers are often quite close, so it is desirable to also study the so-called \emph{Boussinesq limit} where $\rho_2 \to \rho_1$. Working in a reference frame moving with the wave, we assume that the interface $\fluidS$ between the two layers, as well as the fluid velocity fields, are independent of time. Both for simplicity and because it is the setting with the most applied interest, suppose that the fluid velocity tends to some constant value $(-c,0)$ in the upstream limit $x \to -\infty$, where here $c>0$ is interpreted as the wave speed. Letting $h_1,h_2 > 0$ be the upstream thicknesses of the two layers and $g > 0$ be the constant acceleration due to gravity, the dimensionless Froude number
\begin{equation}
  \label{definition Froude number}
  F \colonequals \frac c{\sqrt{g(h_1+h_2)}} 
\end{equation}
measures the relative importance of inertial and gravitational effects. Switching to dimensionless units with $h_1+h_2$ as the length scale and $c$ as the velocity scale, we are left with the three dimensionless parameters $F$, $\rho_2/\rho_1$, and $h_1$. 

\emph{Bores} are front-type solutions to this system, meaning that the height of each fluid layer in the downstream limit $x \to +\infty$ is distinct from the upstream limit. Through a so-called conjugate flow analysis, it can be shown that there is a unique value of the Froude number for which bores are possible:
\begin{equation}
  \label{definition front Froude number}
  F^2 = \frac{\sqrt{\rho_1}-\sqrt{\rho_2}}{\sqrt{\rho_1}+\sqrt{\rho_2}};
\end{equation}
see, for example, \cite[Appendix A]{Laget1997interfacial} or \cite[Lemma 5.5]{chen2023global}.  For the time being, we consider the case where the interface $\fluidS$ between the layers can be parameterized globally as the graph of a single-valued function $\eta = \eta(x)$. We adopt coordinates so that $\eta \to 0$ as $x \to -\infty$. The bottom boundary of the channel is therefore $y=-h_1$, and the upper boundary is $y=h_2=1-h_1$.  We denote the upper fluid domain in these variables by $\fluidD_2$, the lower fluid by $\fluidD_1$, and set $\fluidD \colonequals \fluidD_1 \cup \fluidD_2$. See Figure~\ref{bore problem setup figure} for an illustration.

\begin{subequations}\label{eqn:stream}
  Requiring the flow to be irrotational and incompressible in each layer, the Euler equations can be expressed in terms of a \emph{(pseudo) stream function} $\psi$ satisfying
  \begin{equation} \label{eqn:psi harmonic}
    \Delta \psi = 0 \quad \text{in } \fluidD.
  \end{equation}
  The pseudo stream function is constant along the interface as it is a material surface, and we normalize this constant to be zero:
  \begin{align}
    \label{eqn:stream:kinint}
    \psi = 0 \qquad \text{on } \fluidS.
  \end{align}
  In particular, $\psi$ is continuous across the interface. Our assumptions upstream can be written as
  \begin{align}
    \label{eqn:stream:asym}
    \nabla\psi_i \to (0,-\sqrt{\rho_i}),
    \quad 
    \eta \to 0
    \qquad \text{ as } x \to -\infty.
  \end{align}
  The rigid boundaries $y=-h_1$ and $y=h_2$ are also material surfaces, and hence level curves of the stream function. Calculating these values using \eqref{eqn:stream:kinint} and \eqref{eqn:stream:asym}, we find
  \begin{align}
    \begin{alignedat}{2}
      \psi &=  h_1 \sqrt{\rho_1} &\quad& \text{ on } y=-h_1,  \\
      \psi &= -  h_2 \sqrt{\rho_2} &\quad& \text{ on } y=h_2. 
    \end{alignedat}
    \label{psi value on top}
  \end{align}
  Finally, the dynamic boundary condition on $y=\eta(x)$ asserts the continuity of the pressure. Evaluating the pressure using Bernoulli's law leads to the fully nonlinear boundary condition
  \begin{equation}
    \label{eqn:stream:dynamic}
     \abs{\nabla\psi_2}^2 -  \abs{\nabla\psi_1}^2
    + \frac {2(\rho_2-\rho_1)}{F^2} y =  {\rho_2-\rho_1}  \qquad  \ona \fluidS.
  \end{equation}
For a general solution of the internal wave problem, the constant on the right-hand side above is an unknown. Here, however, it is determined completely by the upstream state~\eqref{eqn:stream:asym}. Note that while we have assumed the densities are distinct, in the {Boussinesq limit} $\rho_2 \to \rho_1$, taking $F$ to be the unique Froude number \eqref{definition front Froude number} admitting fronts, one finds that the dynamic condition~\eqref{eqn:stream:dynamic} likewise becomes
  \begin{equation}
  \label{eqn:stream:dynamic-boussinesq}
  \tag{\ref*{eqn:stream:dynamic}$'$}
  	 |\nabla \psi_2|^2 -  |\nabla \psi_1|^2 - 8 \rho_1 y = 0 \qquad \textrm{on } \fluidS.
  \end{equation}
\end{subequations}
Therefore, when we refer to the Boussinesq limit of the internal wave problem~\eqref{eqn:stream}, we mean the system that results from setting $\rho_1 = \rho_2$ and replacing the dynamic condition~\eqref{eqn:stream:dynamic} by \eqref{eqn:stream:dynamic-boussinesq}.

For any $\alpha \in (0,1)$, there exists classical solutions $(\psi,\eta,h_1)$ to the internal front problem~\eqref{eqn:stream} enjoying the Hölder regularity
\begin{equation}
  \label{definition classical regularity internal wave}
  \psi \in C_\bdd^{2+\alpha}(\overline{\fluidD_1}) \cap C_\bdd^{2+\alpha}(\overline{\fluidD_2}) \cap C_\bdd^0(\overline{\fluidD}), \qquad \eta \in C_\bdd^{2+\alpha}(\mathbb{R}).
\end{equation}
Small-amplitude bores (that is, with $\| \eta \|_{C^{2+\alpha}} \ll 1$) were originally obtained by Amick and Turner~\cite{amick1989small}, and then later through different methods by Makarenko~\cite{makarenko1992bore,makarenko1999conjugate}, Mielke~\cite{mielke1995homoclinic}, and Chen, Walsh, and Wheeler~\cite{chen2022center}. We also mentioned that viscous bores of small amplitude (in a single fluid) were quite recently studied by Stevenson and Tice~\cite{stevenson2025gravity}. Families of bores extending to large amplitude were first constructed in~\cite{chen2023global}, the main result of which is recalled in Section~\ref{preliminaries existence section}. Our purpose in the present paper is to understand the limiting form of the bores along these families. With that in mind, we will later consider a broader notion of solution to~\eqref{eqn:stream}, namely \emph{domain variational solutions}, that allows for more singular behavior. We postpone their definition until Section~\ref{model problem section}.

\subsection{Gravity currents and cavity flows}
\label{intro gravity current section}

\begin{figure}
	\centering
	\includegraphics{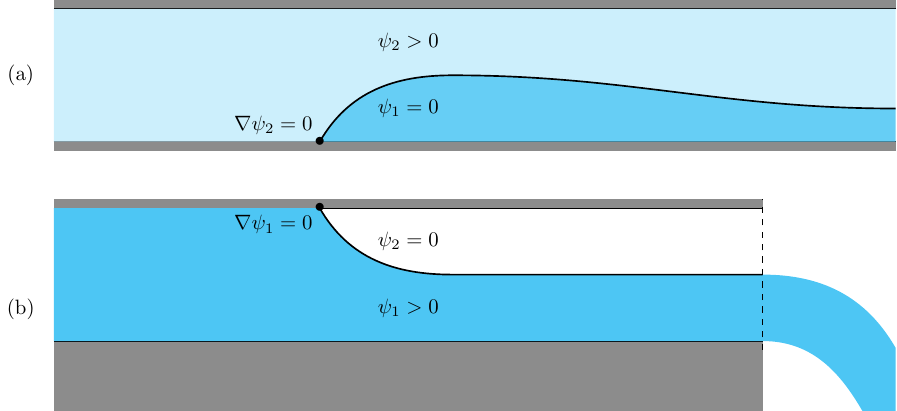}
	\caption{(a) The gravity current model proposed by von Kármán~\cite{vonkarman1940engineer}. A heavier fluid (shaded darker blue) intrudes on a lighter fluid (shaded light blue). The velocity in the heavy fluid is assumed to be constant. In the moving frame, the heavy fluid appears to be at rest, while the lighter fluid flows over it. The free boundary meets the bed at a stagnation point (in both phases). (b) Benjamin's~\cite{benjamin1968gravity} cavity front model. There is a single phase, which is emptying out of the channel far downstream.} 
	\label{gravity current figure}
\end{figure}

When a heavier fluid intrudes into a lighter fluid occupying a rigid channel, it can produce a traveling front called a \emph{gravity current}.  This phenomenon occurs in a remarkably large number of applications, including such varied examples as billowing clouds following volcanic eruptions \cite{moore1969nuees}, river plumes \cite{garvine1974dynamics}, and oil spillage in the ocean \cite{hoult1972oil}; see the survey \cite{simpson1982gravity}.  In 1940, von K\'arm\'an \cite{vonkarman1940engineer} formally analyzed a model for gravity currents consisting of two superposed layers of immiscible perfect fluids.  Evoking the classical conjecture of Stokes \cite{stokes1880theory} for periodic water waves beneath vacuum, he reasoned that if there was a stagnation point where the interface meets the wall, then the contact angle there must be precisely $60\degree$. Almost three decades later, Benjamin \cite{benjamin1968gravity} reconsidered the physical validity of von K\'arm\'an's model and gave a second derivation based on conjugate flow analysis and formal power series. He also noted that the same mathematical problem describes \emph{cavity flow}, wherein a lighter density fluid at uniform velocity intrudes into a heavier fluid from above. It is now common to use the term gravity current to refer to both scenarios. See Figure~\ref{gravity current figure} for an illustration.

Benjamin's cavity flow model arises in the formal limit of the internal bore problem~\eqref{eqn:stream} when the interface $\fluidS$ approaches the upper rigid boundary; von Kármán's gravity current on the other hand would arise if the interface met the lower rigid boundary.  We will present the formulation for the cavity flow case first. We also relax the assumption that $\fluidS$ is a graph, and only ask it to be a (locally) rectifiable globally injective curve. Suppose that $\fluidS$ meets the upper boundary at a unique point, which in particular means the upper layer has an upstream depth $h_2 = 0$. This choice is in keeping with the decision to fix the upstream flow in \eqref{eqn:stream:asym}. We may therefore choose the horizontal axes so that contact occurs at the origin:
\begin{subequations}
\label{gravity current problem}
\begin{equation}
  (0,0) \in \partial\fluidS.
\end{equation}
The flow in the lower layer is still taken to be irrotational, while the flow in the upper layer is at constant velocity, hence
\begin{align}
	\label{intro dc laplace}
	\Delta \psi_1 = 0 \quad \textrm{in } \fluidD_1, \qquad \nabla \psi_2 = 0 \quad \textrm{in } \fluidD_2.
\end{align} 
The kinematic conditions on the interface remains the same 
  \begin{align}
    \label{intro gc kinematic S}
    \psi = 0 \qquad \text{on } \fluidS,
  \end{align}
  and setting $h_2 = 0$ and $h_1 = 1$ in \eqref{psi value on top}, we find that on the rigid boundaries 
    \begin{align}
    \begin{alignedat}{2}
      \psi &=   \sqrt{\rho_1} &\quad& \text{ on } y=-1,  \\
      \psi &=  0 &\quad& \text{ on } y= 0. 
    \end{alignedat}
    \label{intro gc kinematic bed and lid}
  \end{align}
  Note that these three equations together force $\psi_2 \equiv 0$. Because the upper layer does not extend upstream, we only impose asymptotic conditions on the flow in the lower layer:
  \[
  	\nabla \psi_1 \longrightarrow (0,-\sqrt{\rho_1}) \qquad \textrm{as } x \to -\infty.
  \]
  Finally, the dynamic or Bernoulli condition on the interface becomes
  \begin{equation}
    \label{intro gc dynamic non-boussinesq}
     \abs{\nabla\psi_1}^2     = \frac {2 (\rho_2-\rho_1)}{F^2} \left( y + Q \right)  \qquad  \ona \fluidS,
  \end{equation}
  in the case $\rho_1 \neq \rho_2$, where $Q$ is the Bernoulli constant. The value of $Q$ can be determined from the asymptotic behavior downstream. For gravity currents that are the limit of solutions to the internal front problem~\eqref{eqn:stream}, from~\eqref{eqn:stream:dynamic} we would have $Q = -F^2/2 < 0$. In the Boussinesq limit $\rho_1 = \rho_2$, the dynamic condition~\eqref{eqn:stream:dynamic-boussinesq} instead takes the form 
  \begin{equation}
    \label{intro gc dynamic boussinesq}
    \tag{\ref*{intro gc dynamic non-boussinesq}$'$}
     |\nabla \psi_1|^2 = -8 \rho_1 y  \qquad \textrm{on } \fluidS.
  \end{equation}
\end{subequations}
Note that when $\fluidS$ is merely locally rectifiable, the boundary conditions~\eqref{intro gc dynamic boussinesq} and \eqref{intro gc dynamic non-boussinesq} must be satisfied in some appropriately weak sense. As mentioned above, in this paper we work with domain variational solutions; for the gravity current problem, specifically, see Definition~\ref{vK variational solution definition}.

A key distinction between \eqref{intro gc dynamic non-boussinesq} and \eqref{intro gc dynamic boussinesq} is that for the former, the contact point at the origin \emph{cannot} be a stagnation point for the lower fluid velocity, whereas in the latter Boussinesq limit, it \emph{must be}. One can also consider more general cavity flows or gravity currents where the asymptotic conditions are such that a stagnation point is possible even in the non-Boussinesq case. These, however, are not potential limits of the family of internal fronts we study.
Nonetheless, in Section~\ref{gravity current section}, the formulation of the gravity current problem we use does allow for this scenario and many of our results extend to this more general setting.

In the case of a gravity current where the interface $\fluidS$ meets the lower rigid boundary, the formulation is essentially the same with the roles of the fluid layers reversed. That is, the upstream heights are $h_1 = 0$ and $h_2 = 1$, and the flow in the lower fluid is stagnant. The resulting system is thus
\begin{subequations}
\label{elevation gravity current problem}
\begin{equation}
  \label{elevation gravity current momentum and kinematic}
  \left\{
    \begin{aligned}
      \nabla \psi_1 & = 0 & \qquad & \ina \fluidD_1 \\
      \Delta \psi_2 & = 0 & \qquad & \ina  \fluidD_2 \\
      \psi & = 0 & \qquad & \ona \fluidS \cup \{ y = 0 \} \\
      \psi & = -\sqrt{\rho_2}  & \qquad & \ona y = 1 \\
    \end{aligned}
  \right.
\end{equation}
along with the dynamic condition on $\fluidS$, which in the non-Boussinesq case is
\begin{equation}
  \label{elevation gravity current dynamic non-boussinesq} 
  \abs{\nabla\psi_2}^2    = \frac {2 (\rho_1-\rho_2)}{F^2} \left( y + Q \right)  \qquad \ona \fluidS 
\end{equation}
and, in the Boussinesq limit, becomes 
\begin{equation}
    \label{elevation gravity current dynamic boussinesq}
    \tag{\ref*{elevation gravity current dynamic non-boussinesq}$'$}
     |\nabla \psi_2|^2 = 8 \rho_2 y  \qquad \textrm{on } \fluidS.
  \end{equation}	
\end{subequations}

\subsection{Main results}

\begin{figure}
	\centering
  \includegraphics[page=1]{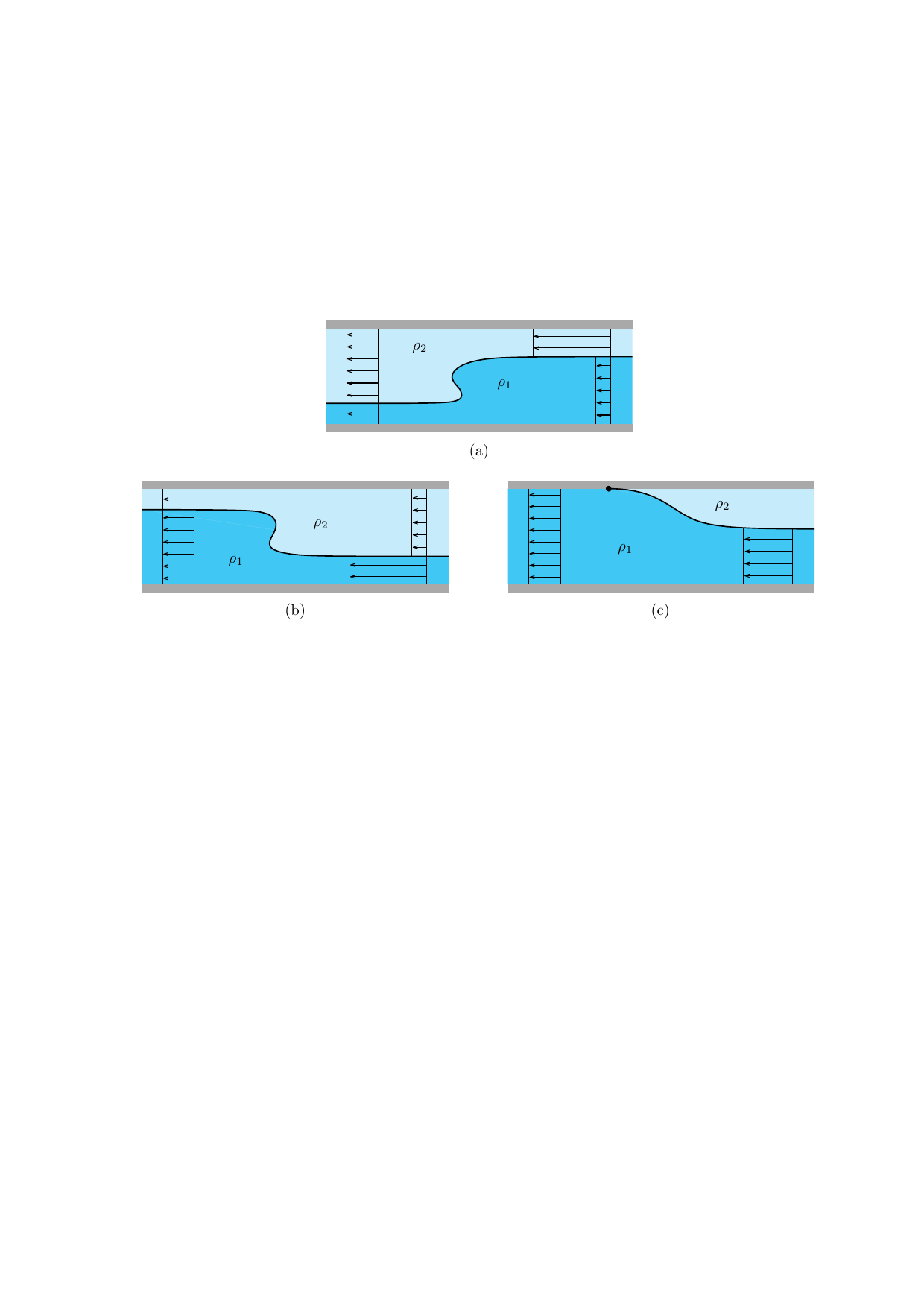}
  \caption{Limiting configurations observed numerically by Dias and Vanden-Broeck~\cite{dias2003internal} and verified in Theorems~\ref{overturning theorem}--\ref{non-boussinesq gravity current theorem} in the non-Boussinesq setting. (a) Bores of elevation $\cme$ invariably overturn. Along the curve of depression bores $\cmd$, either (b) overturning occurs or (c) the free boundary limits to the upper wall, resulting in a gravity current where the free boundary meets the wall tangentially.}
	\label{non-boussinesq bore alternatives figure}
\end{figure}

We now give an informal statement of the contributions of the present paper. Our first result completely resolves the limiting behavior along the family of elevation bores in the non-Boussinesq case.

\begin{theorem}[Overturning]
  \label{overturning theorem}
  For any $0 < \rho_2 \leq \rho_1$, there exist a global $C^0$ curve
  \[ \cme = \left\{ (\psi(s), \eta(s), h_1(s)) : s \in (-\infty,0) \right\}\]
  of classical solutions to the internal front problem~\eqref{eqn:stream} exhibiting the H\"older regularity~\eqref{definition classical regularity internal wave}.  If $\rho_2 < \rho_1$, then in the limit along $\cme$, the interface \emph{overturns}:
  \begin{equation}
    \label{intro overturning}
    \lim_{s \to -\infty} \sup \partial_x \eta(s) = \infty. 
  \end{equation}
\end{theorem}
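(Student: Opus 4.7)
The plan is to proceed by contradiction, building on the nodal/global bifurcation framework already established in \cite{chen2023global}. That paper constructs the continuum $\cme$ and shows that in the limit $s\to-\infty$ one of two alternatives must occur: either the overturning conclusion \eqref{intro overturning} holds, or the solutions develop a \emph{double stagnation} degeneracy in which a limiting configuration $(\psi^\star,\eta^\star,h_1^\star)$ has a point $p^\star\in\fluidS^\star$ at which both $|\nabla\psi_1^\star|$ and $|\nabla\psi_2^\star|$ vanish. The entire task is to exclude the second alternative.

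First I would extract the limit. Under the standing assumption $\sup\partial_x\eta$ stays bounded along $\cme$, the interfaces remain uniformly $C^1$; together with Schauder estimates in each phase and the fixed channel width this yields subsequential compactness to a domain variational solution of \eqref{eqn:stream} carrying at least one double stagnation point $p^\star$. Substituting $\nabla\psi_1^\star(p^\star)=\nabla\psi_2^\star(p^\star)=0$ into \eqref{eqn:stream:dynamic} pins the stagnation height exactly to $y(p^\star)=F^2/2$, locating $p^\star$ geometrically in the interior of the channel, above the asymptotic interface level.

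The heart of the argument is a sharp decay estimate for $|\nabla\psi^\star|$ as $p\to p^\star$, obtained by adapting the regularity theory of Bernoulli-type free boundary problems (in the spirit of \cite{varvaruca2011geometric,shargorodsky2008bernoulli,kriventsov2025minmax}) to the two-phase setting. I would introduce an appropriate Weiss- or Almgren-type monotonicity functional centered at $p^\star$, perform a blow-up along vanishing scales, and classify the possible homogeneous tangent profiles subject to the constraints that each blow-up is harmonic in its phase, vanishes on the interface, and respects the two-sided Bernoulli jump \eqref{eqn:stream:dynamic} under rescaling. The output is a quantitative lower bound on the growth of $|\nabla\psi^\star|$ away from $p^\star$.

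The final step is to show this growth rate is inconsistent with the global structure of the wave. Since $\eta^\star$ is a bounded graph, the channel has fixed width, and the upstream asymptotics \eqref{eqn:stream:asym} fix the mass flux through each layer, maximum principle or barrier comparisons should preclude any admissible solution from actually realizing the homogeneity forced by the blow-up, yielding the contradiction. The main obstacle is genuinely the velocity decay estimate: at a doubly stagnant point the classical one-phase Bernoulli regularity theory does not directly apply, and the customary monotonicity functional must be redesigned so that (i) the simultaneous degeneration of both gradients does not destroy its monotonicity, and (ii) the classification of admissible blow-up profiles correctly incorporates the two-sided jump condition. Once such a decay bound is secured, the exclusion of the double stagnation alternative---and hence the overturning theorem---should follow.
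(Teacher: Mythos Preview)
Your overall architecture matches the paper's: argue by contradiction, extract a limiting domain variational solution with a double stagnation point at the critical height, build a Vărvăruca--Weiss style monotonicity functional, perform a $3/2$-homogeneous blowup, classify the tangent profiles, and derive a contradiction. Where your proposal diverges from the paper is in the mechanism of the contradiction and, more seriously, in the one technical step you yourself flag as the main obstacle.

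The genuine gap is the \emph{upper} bound on the averaged Dirichlet energy at the stagnation point. In the one-phase problem, the pointwise Bernstein estimate $|\nabla\psi|^2 = O(y)$ follows from the maximum principle and underlies the entire blowup analysis in \cite{varvaruca2011geometric}. In the two-phase problem this estimate is simply false: Bernoulli only controls the \emph{jump} $|\nabla\psi_2|^2 - |\nabla\psi_1|^2$, not either side separately. The paper's key new idea is Theorem~\ref{energy bound theorem}: for \emph{monotone} solutions (in the sense that $u_x,u_y$ have signs and $|u_x|\le M|u_y|$), the functionals $\mathcal A(r)=r^{-2}\int_{B_r}u_xu_y$ and $\mathcal B(r)=r^{-2}\int_{B_r}(u_y^2-u_x^2)$ are Lipschitz in $r$, and together they bound $r^{-2}\int_{B_r}|\nabla u|^2$. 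At a double stagnation point $\mathcal A(0+)=\mathcal B(0+)=0$, so the averaged energy is $O(r)$. This is what justifies compactness of the blowup sequence $u(r_m\cdot)/r_m^{3/2}$ and is the missing ingredient in your plan; without it you cannot even get to the tangent profile classification. The monotonicity of the solutions along $\cme$, which you do not invoke, is precisely what makes this work.

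Your description of the final contradiction is also off. The paper does not appeal to global structure (mass flux, channel width, upstream asymptotics). Instead, monotonicity forces the blowup limit to be identically zero (Lemma~\ref{limit of chi lemma}\ref{limiting chi monotone part}), which in turn forces the free boundary to be $C^1$ and flat at the stagnation point (Lemma~\ref{flatness lemma}). Oddson's edge-point lemma then gives a \emph{lower} bound $|u|\gtrsim |(x,y)|^{1+\varepsilon}$ in a cone of aperture close to $\pi$, hence $r^{-2}\int_{B_r}|\nabla u|^2 \gtrsim r^{1/2}$. This lower bound is incompatible with the $O(r)$ upper bound just established, and the contradiction is entirely local.
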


To the best of our knowledge, this theorem represents the first rigorous proof that overturning definitively occurs along a bifurcation curve of water waves with $O(1)$ gravity. It also verifies the numerical computations of Dias and Vanden-Broeck~\cite{dias2003internal}.
We caution that Theorem~\ref{overturning theorem} only guarantees that the slope of the free boundary tends to $\infty$, it does not assert that there is a limiting overhanging bore. See Figure~\ref{non-boussinesq bore alternatives figure}(a) for an illustration of a bore just beyond the overturning limit \eqref{intro overturning} as predicted by the numerics in~\cite{dias2003internal}. Extending $\cme$ to this regime requires a different formulation of the problem than in~\cite{chen2023global}, which will be carried out in a future work.

For the curve of depression bores in the non-Boussinesq setting, we prove that either overturning occurs or else one can extract a limiting gravity current solution for which the internal interface contacts the upper wall tangentially; see Figure~\ref{non-boussinesq bore alternatives figure}(b,c). Note that in this case, the Bernoulli constant is nonzero, so the tangential intersection agrees with the formal prediction of Chandler and Trinh~\cite{chandler2018complex}.

\begin{theorem}[Gravity current or overturning]
  \label{non-boussinesq gravity current theorem}
For any $0 < \rho_2 \leq \rho_1$, there exist a global $C^0$ curve
  \[ \cmd = \left\{ (\psi(s), \eta(s), h_1(s)) : s \in (0,\infty) \right\}\]
  of classical solutions to the internal front problem~\eqref{eqn:stream} exhibiting the H\"older regularity~\eqref{definition classical regularity internal wave}. If $\rho_2 < \rho_1$, then in the limit along $\cmd$, either the interface \emph{overturns}
  \begin{equation}
    \label{intro overturning depression}
    \lim_{s \to \infty} \inf \partial_x \eta(s)  = -\infty
  \end{equation}
  or there is a limiting \emph{gravity current}: there exists a bounded sequence $\{ x_n \} \subset \mathbb{R}$ and a sequence $s_n \nearrow \infty$ such that, for all $\varepsilon \in (0,1)$,
  \begin{equation}
    \label{depression gravity current limit statement}
    \psi(s_n)(\placeholder-x_n, \placeholder) \xrightarrow{H^1_\loc \cap C_\loc^\varepsilon} \psi, \qquad \eta(s_n)(\placeholder-x_n) \xrightarrow{C_\loc^\varepsilon}  \eta, \qquad h_1(s_n) \longrightarrow 1,
  \end{equation}
  where $(\psi, \fluidS)$ is a nontrivial solution to the gravity current problem~\eqref{gravity current problem} in the domain variational sense with $\fluidS$ being the graph of the Lipschitz continuous function $\eta$. Moreover, the limiting interface meets the wall tangentially. 
  \end{theorem}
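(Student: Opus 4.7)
The plan is to run a compactness argument along the global curve $\cmd$, assuming the overturning alternative \eqref{intro overturning depression} fails, and to identify the only remaining degeneracy as the gravity current with $h_1 \to 1$. Concretely, suppose $\inf_{s > 0}\inf_{x \in \mathbb R} \partial_x \eta(s)(x) \geq -L$ for some finite $L$. Then $\eta(s)$ is uniformly Lipschitz along $\cmd$, and interior elliptic regularity combined with Schauder estimates using the Bernoulli condition \eqref{eqn:stream:dynamic} yields uniform local Hölder bounds on $(\psi(s), \eta(s))$ up to the interface. Using the horizontal translation invariance of the front problem, choose shifts $x_n \in \mathbb R$ so that $\eta(s_n)(x_n) = -\delta$ for a fixed small $\delta > 0$, which is admissible for $n$ large since depression bores have $\eta \to 0$ upstream and progressively deeper minima along $\cmd$. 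Along a subsequence, $h_1(s_n) \to h_1^\infty \in (0, 1]$ and $(\psi(s_n)(\placeholder - x_n, \placeholder),\ \eta(s_n)(\placeholder - x_n))$ converges in the topologies of \eqref{depression gravity current limit statement} to a nontrivial domain variational solution.

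The heart of the proof is to exclude $h_1^\infty < 1$. If it held, the limit would be a bounded two-fluid front with interface strictly in the interior of the channel. Because the interface is uniformly Lipschitz and does not touch either wall, the only degeneracy mechanism consistent with the global alternatives of \cite{chen2023global} is a \emph{double stagnation point}: a common point $p \in \fluidS$ at which $\nabla \psi_1$ and $\nabla \psi_2$ both vanish. This is where I would invoke the main geometric estimate advertised in the introduction, namely the decay rate for the velocity field near a hypothetical double stagnation. In the non-Boussinesq regime $\rho_1 > \rho_2$, evaluating \eqref{eqn:stream:dynamic} at $p$ forces $|\nabla \psi_2|^2 - |\nabla \psi_1|^2$ to equal a nonzero constant there, which is incompatible with both $|\nabla \psi_i|$ vanishing at $p$. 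Hence $h_1^\infty = 1$, and the limit solves the gravity current system \eqref{gravity current problem}.

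For the tangential contact of $\fluidS$ with the upper wall, note that a gravity current descended from the bore problem has Bernoulli constant $Q = -F^2/2$, so \eqref{intro gc dynamic non-boussinesq} evaluated at the contact point $y = 0$ gives $|\nabla \psi_1|^2 = \rho_1 - \rho_2 > 0$; in particular the contact is non-stagnant. If the interface met the wall at a nonzero angle, the fluid domain $\fluidD_1$ would locally be a wedge with opening $\alpha \in (0, \pi)$ on whose two boundary sides $\psi_1 = 0$. Classical wedge theory then gives the leading behavior $\psi_1 \sim r^{\pi/\alpha}$ and hence $|\nabla \psi_1| \sim r^{\pi/\alpha - 1} \to 0$ as $r \to 0$, contradicting the strictly positive Bernoulli value. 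Therefore $\alpha = 0$ and the contact is tangential.

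The principal obstacle is the exclusion of the double stagnation alternative. Existing free boundary regularity theory in the Bernoulli setting \cite{varvaruca2011geometric,shargorodsky2008bernoulli} is largely one-phase, whereas here both phases must be controlled simultaneously in the presence of the nontrivial jump in $|\nabla \psi|^2$ across $\fluidS$ dictated by \eqref{eqn:stream:dynamic}. A Weiss- or Almgren-type frequency analysis adapted to the two-phase problem will likely be required, carefully tracking the independent contributions of the two layers so as to rule out spurious cancellations between them.
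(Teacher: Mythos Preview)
Your proposal misidentifies where the real work lies. The alternative ``$h_1^\infty < 1$'' that you spend most of your effort excluding is already ruled out in the prior paper \cite{chen2023global}: Theorem~\ref{global bore theorem}\ref{depression part} states that along $\cmd$, the only alternatives are overturning or $\lim_{s\to\infty} h_1(s)=1$. Double stagnation is not on the list for depression bores (it is the obstacle for \emph{elevation} bores, Theorem~\ref{overturning theorem}). Your final paragraph about two-phase frequency analysis is thus solving the wrong theorem. Incidentally, your Bernoulli-condition argument does contain a correct kernel---for depression bores $\eta\le 0$ while the critical double-stagnation height is $y=F^2/2>0$---but you never state this height constraint, and in any case this is how \cite{chen2023global} already disposed of the alternative.

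The genuine difficulties you gloss over are two. First, compactness: you invoke ``interior elliptic regularity combined with Schauder estimates,'' but as $h_1(s)\to 1$ the upper layer collapses and the interface approaches the rigid lid, so there is no fixed interior to work in and the ACF monotonicity formula cannot be applied directly to control $|\nabla\psi|$ near $\fluidS(s)$. The paper's key device (proof of Theorem~\ref{limiting gravity current theorem}) is to extend $\psi(s)$ beyond the walls as the upstream laminar flow, show via \eqref{lid horizontal velocity bound} that the extensions are superharmonic in each phase, and only then apply ACF in a fixed strip. You also need a conjugate-flow argument (Lemma~\ref{conjugate flow lemma stagnant}) to show the limit actually touches the wall rather than being a trivial parallel flow. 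Second, the contact angle: your wedge argument presupposes that the limiting free boundary has a well-defined opening angle at the contact point, but a priori $\fluidS$ is only Lipschitz, so the expansion $\psi_1\sim r^{\pi/\alpha}$ is not available. The paper instead builds a monotonicity formula adapted to the half-ball (Theorem~\ref{gc monotonicity formula theorem}) and runs a blowup argument to show every blowup limit is $\alpha y$, from which tangential contact follows by a measure-theoretic contradiction (Theorem~\ref{contact angle theorem}\ref{contact angle Q not 0 part}). Your heuristic is morally right for $Q\ne 0$, but turning it into a proof requires exactly this machinery.
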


Finally, in the Boussinesq setting for both the curve of depression bores $\cmd$ as well as the curve $\cme$, we prove that if overturning does not occur, then one can likewise extract a limiting solution to the gravity current problem in the domain variational sense. However, in this case the interface will make a $60\degree$ contact angle with the rigid boundary as conjectured by von Kármán; see Figure~\ref{boussinesq bore alternatives figure}.

\begin{figure}
	\centering
  \includegraphics[page=2]{bore_alternatives.pdf}
  \caption{Limiting configurations in Theorem~\ref{boussinesq gravity current theorem} in the Boussinesq setting. Along the curve of elevation bores $\cme$, either (a) overturning occurs or (b) the free boundary limits to the upper wall, resulting in a gravity current where the free boundary makes a contact angle of exactly $60\degree$. The alternatives (c) and (d) for depression bores $\cmd$ are similar. Dias and Vanden-Broeck~\cite{dias2003internal} numerically observe (b) and (d) but not (a) or (c).
  \label{boussinesq bore alternatives figure}}
\end{figure}

\begin{theorem}[Limiting Boussinesq bores] 
  \label{boussinesq gravity current theorem}
Consider the curves $\cme$ and $\cmd$ in the Boussinesq setting $\rho_1 = \rho_2$.
\begin{enumerate}[label=\rm(\alph*\rm)]
	\item \textup{(Elevation)} \label{boussinesq elevation part} 
	In the limit along $\cme$ either overturning~\eqref{intro overturning} occurs or there is a limiting \emph{gravity current}: there is a bounded sequence $\{ x_n \} \subset \mathbb{R}$ and a sequence $s_n \searrow -\infty$ such that, for all $\varepsilon \in (0,1)$,
  \begin{equation}
    \label{elevation gravity current limit statement}
    \psi(s_n)(\placeholder-x_n, \placeholder) \xrightarrow{H^1_\loc \cap C_\loc^\varepsilon} \psi, \qquad \eta(s_n)(\placeholder-x_n) \xrightarrow{C_\loc^\varepsilon}  \eta, \qquad h_1(s_n) \longrightarrow 0,
  \end{equation}
  where $(\psi, \fluidS)$ is a nontrivial solution to the gravity current problem~\eqref{elevation gravity current problem} in the domain variational sense with $\fluidS$ being the graph of the Lipschitz continuous function $\eta$. Moreover, the limiting interface meets the lower wall at a $60\degree$ angle.
  	
	\item \textup{(Depression)} \label{boussinesq depression part}
	Following $\cmd$, either overturning~\eqref{intro overturning depression} occurs or else there is a limiting gravity current~\eqref{depression gravity current limit statement} whose  interface meets the upper wall at a $60\degree$ angle.
\end{enumerate}
\end{theorem}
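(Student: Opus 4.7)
The plan is to reduce the theorem to a compactness argument combined with the local stagnation analysis used elsewhere in the paper to determine the contact angle for general gravity currents. I will focus on part~\ref{boussinesq depression part}; part~\ref{boussinesq elevation part} follows by a symmetric argument with the elevation gravity current problem~\eqref{elevation gravity current problem} replacing~\eqref{gravity current problem}.

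Starting from the global curve $\cmd$ of \cite{chen2023global}, as $s \to \infty$ some limiting alternative along the curve must occur. Assuming that the overturning alternative~\eqref{intro overturning depression} fails, the slopes $\partial_x \eta(s)$ remain uniformly bounded from below, and a bootstrap using interior elliptic regularity for $\psi$ together with the Bernoulli identity~\eqref{eqn:stream:dynamic-boussinesq} yields uniform $C^{1,\alpha}_\loc$ bounds on $\eta(s)$ away from any hypothetical contact point with the upper wall. To extract a meaningful limit, I would pick translation points $x_n \in \R$ so that $\psi(s_n)(\cdot - x_n,\cdot)$ retains a fixed fraction of its nontrivial profile (for instance, by centering where $\eta(s_n)$ crosses a level strictly between its upstream and downstream heights). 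Arzel\`a--Ascoli together with weak$^*$ compactness then delivers a subsequential limit $(\psi, \eta)$ with $\eta$ Lipschitz and $\psi \in H^1_\loc \cap C^\varepsilon_\loc$ solving the Boussinesq internal bore system in the domain variational sense. Passing to a further subsequence, $h_1(s_n) \to h_1^\infty \in [0,1]$.

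The crucial step is to show that this limit is nontrivial and that $h_1^\infty = 1$. If $h_1^\infty \in (0,1)$ and $(\psi,\eta)$ were a nontrivial classical bore in the Boussinesq regime, then the qualitative theory developed for the global continuation in \cite{chen2023global}---including nodal properties, sign conditions distinguishing $\cmd$ from $\cme$, and properness on bounded subsets of the solution manifold---would force $\cmd$ to be bounded, contradicting its unbounded parameterization. A variant of the same argument rules out triviality. Thus $h_1^\infty = 1$, the upper-fluid region has collapsed against the upper wall $\{y = 0\}$, and the limit solves the gravity current problem~\eqref{gravity current problem} with $\fluidS$ the graph of a Lipschitz function meeting that wall at one or more contact points.

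Finally, at a contact point $(x_\ast,0) \in \partial\fluidS$, I invoke the local blow-up analysis already developed for the gravity current problem in isolation. Specifically, consider the rescaling $\psi_r(z) \colonequals r^{-3/2}\psi(x_\ast + r z)$, which leaves invariant both the Laplace equation and the Boussinesq Bernoulli condition~\eqref{intro gc dynamic boussinesq}. The $r^{1/2}$ decay estimate on the velocity field near a surface stagnation point---one of the novel geometric ingredients of the paper---guarantees that the family $\{\psi_r\}$ is precompact and that every subsequential limit is a harmonic function, homogeneous of degree $3/2$, defined on an open wedge with vertex at the origin and vanishing on both wedge boundaries. Matching against the Bernoulli identity then uniquely selects the wedge opening $\pi/(3/2) = 2\pi/3$, so that $\fluidS$ meets the horizontal wall at a contact angle of $180\degree - 120\degree = 60\degree$, confirming von K\'arm\'an's conjecture. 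The main obstacle will be establishing the $r^{1/2}$ velocity decay needed to launch the blow-up, since the Boussinesq Bernoulli condition degenerates precisely at the contact; once that decay is in hand, the contact-angle identification follows from the same corner-flow analysis already performed in the paper's treatment of the gravity current problem.
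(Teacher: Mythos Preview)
Your overall strategy matches the paper's: extract a limiting gravity current via compactness (the paper's Theorem~\ref{limiting gravity current theorem}) and then determine the contact angle by blow-up (the paper's Theorem~\ref{contact angle theorem}). Two places in your sketch are genuinely incomplete, though.

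First, the compactness step is more delicate than ``Arzel\`a--Ascoli plus weak-$\ast$.'' As $h_1(s_n)\to 1$ the upper layer collapses, and the usual elliptic estimates near the upper wall degenerate; you need a uniform Lipschitz bound on $\psi(s_n)$ that survives this. The paper handles this by extending $\psi(s_n)$ across both rigid walls as the upstream laminar flow $-\sqrt{\rho_i}\,y$, checking via~\eqref{lid horizontal velocity bound} that the extension is superharmonic in each phase, and then applying the ACF monotonicity formula on balls of size $O(1)$ centered on $\fluidS(s_n)$. Your ``bootstrap using interior elliptic regularity away from the contact point'' does not provide a uniform bound \emph{at} the wall, which is exactly where the limit lives. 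Relatedly, you work hard to argue $h_1^\infty=1$ by properness; in fact this is already one of the alternatives in Theorem~\ref{global bore theorem}\ref{depression part} (and its Boussinesq version), so once overturning is excluded it is immediate.

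Second, your blow-up paragraph oversimplifies the classification. It is not true that ``every subsequential limit is \ldots\ defined on an open wedge'' and that Bernoulli ``uniquely selects'' the opening $2\pi/3$. A priori the blow-up $u_0$ can be identically zero (alternatives~\ref{vK all heavier alternative}, \ref{vK all lighter alternative} in Lemma~\ref{vK density lemma}), which corresponds to a tangential \emph{cusp} rather than a $60\degree$ corner, and there is also the symmetric Stokes corner~\ref{vK Stokes corner alternative}. The paper rules out~\ref{vK Stokes corner alternative} by connectedness of $\Omega^+(u)$, and rules out the cusp on the ``wrong side'' by the Oddson lower bound (Theorem~\ref{oddson theorem}) combined with the $O(r)$ upper bound~\eqref{vK grad u bound}, exactly the mechanism used in the proof of Theorem~\ref{overturning theorem}. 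Finally, it is the \emph{monotonicity} of the limiting $\eta$ that selects $-\pi/3$ over $-2\pi/3$ (equivalently $\eta'(0)=-\sqrt 3$) and excludes the remaining cusp. Your sketch does not invoke monotonicity at this stage, so as written it cannot distinguish the $60\degree$ angle from the $120\degree$ one or from a cusp.
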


\begin{remark}
In the numerical results of Dias and Vanden-Broeck~\cite{dias2003internal}, the Boussinesq bore families are observed to always limit to gravity currents. Our method, however, does not allow us to rule out overturning as an alternative.
\end{remark}

In fact, we give a more general result that applies not only to the limiting gravity currents along $\cme$ and $\cmd$, but to a much broader class of solutions to the gravity current problem; see Theorem~\ref{contact angle theorem}. The original argument of von Kármán~\cite{vonkarman1940engineer} essentially adapts Stokes' reasoning. In particular, it is based on using a complex analytic formulation of the problem, and it assumes the (complex) velocity potential admits a power series representation in a neighborhood of the stagnation point. Other authors have expanded and improved upon this basic idea, including extending it to more general geometries \cite{dagan1972twodimensional,holyer1980gravity,vandenbroeck1994flow,vandenbroeck2010gravitycapillary,chandler2018complex}.  Compared to these works, the geometric method has the advantage of assuming much less regularity a priori. As it does not rely on conformal mappings or other complex-analytic tools, it can also be generalized to treat bores with vorticity.

Despite this wealth of literature on the hypothetical contact angle made by a gravity current, no rigorous constructions of gravity currents have previously been made. Benjamin~\cite[Section 4.3]{benjamin1968gravity} offered an approximate solution that has subsequently been shown to have good agreement with physical and numerical experiments. Exact solutions, however, have been quite difficult to obtain, as the problem is highly nonlinear and there are no explicit solutions nearby from which to perturb.

Lastly, let us make some remarks about the time-dependent system. It is well-known that the gravity internal wave problem is ill-posed in Sobolev spaces, say, due to Kelvin--Helmholtz instability. However, this situation is immediately ameliorated by incorporating small surface tension effects~\cite{shatah2011interface}. In physical experiments \cite{lowe2005non} and numerical simulations \cite{rotunno2011models} of gravity currents in lock-exchange flows, one indeed observes that the interface exhibits rollup phenomena characteristic of Kelvin--Helmholtz instability. Nonetheless, its mean qualitative form is remarkably well-described by the inviscid gravity current model, and it maintains this shape over an extended period of time. This is especially true near the wall, where the $60\degree$ contact angle is observable.

\subsection{Outline of the argument and plan of the paper} 
\label{intro outline section}

The proof of Theorem~\ref{overturning theorem} is an extended argument by contradiction and proceeds roughly as follows. The existence of a global curve of depression bores was proved in~\cite{chen2023global}; see Theorem~\ref{global bore theorem}. Suppose that overturning does not occur. Thus, $\eta(s)$ is uniformly Lipschitz in $s$, and double stagnation \eqref{double stagnation alternative} must occur. Note that by the structure of the dynamic condition~\eqref{eqn:stream:dynamic}, if double stagnation were to occur, it can only be at the unique height $y = F^2/2$. For simplicity, let us reset the coordinates so that this at the origin. Anywhere else along the free boundary, we have by classical results of Caffarelli~\cite{caffarelli1987harnack1} that the Lipschitz continuity of $\eta(s)$ can be upgraded to $C_\loc^{1+\beta}$ regularity for some $\beta \in (0,1)$. Of course, the heart of the matter is to resolve the structure of the interface at the critical height where the dynamic condition~\eqref{eqn:stream:dynamic} is degenerate and standard methods in Bernoulli free boundary regularity theory are inapplicable. 

In a deep paper, Vărvăruca and Weiss~\cite{varvaruca2011geometric} answer the analogous question for the one-fluid case, though they work in arbitrary dimensions and purposely avoid relying on the types of monotonicity properties that we will use extensively.  In Section~\ref{model problem section}, we begin by introducing a model free boundary elliptic problem that shares the same features as~\eqref{eqn:stream}. Note that the free boundary $\fluidS$ is precisely the zero-set of $\psi$, while $\psi$ has (differing) distinguished signs in $\fluidD_1$ and $\fluidD_2$. Following Vărvăruca and Weiss, we therefore study domain variational solutions, which are critical points of the energy with respect to inner variations that maintain the structure of the level sets in a certain sense.  It is important to note that internal waves are typically not extrema of the energy, and this fact precludes the use of some of the stronger results in free boundary regularity. See, however, the very recent paper of Kriventsov and Weiss~\cite{kriventsov2025minmax} where a min-max variational construction  is successfully carried out for a related water wave problem. Exploiting very strongly the qualitative properties of the waves on $\cme$ given by Theorem~\ref{global bore theorem}, we are able to show that if overturning does not occur, then there exists a limiting variational solution $\psi$ to this model elliptic problem. Moreover, $\psi$ enjoys certain monotonicity properties as well as Lipschitz continuity.

One of the key insights of~\cite{varvaruca2011geometric} is that, in the one-phase case, Bernoulli's law justifies the assumption that $|\nabla \psi|^2 = O(y)$; one can see this formally by setting $\rho_2 = 0$ and $\psi_2 = 0$ in \eqref{eqn:stream:dynamic}.  This decay bound is sometimes called a Bernstein-type inequality; it is stated as an assumption throughout~\cite{varvaruca2011geometric}, but it has been verified for weak solutions to the irrotational water wave problem in the periodic and finite-depth cases by Vărvăruca~\cite[Proof of Theorem 3.6]{varvaruca2008bernoulli}, generalizing an idea of Spielvogel~\cite{spielvogel1970variational}.  The decay rate is absolutely essential to the arguments in~\cite{varvaruca2011geometric}. For instance, it is used to justify the subsequential convergence of the blowup limit $\psi( r \placeholder)/r^{3/2}$ as $r \searrow 0$, which is the unique scaling that preserves the dynamic condition. The monotonicity and frequency formulas that constitute the main tools in the analysis are all derived from these types of limits.

However, point-wise decay bounds of this type are not available in the two-fluid problem~\eqref{eqn:stream}. Indeed, arguably the primary distinction between the one- and two-layer cases is that, for the latter, the dynamic condition only controls the decay of the \emph{jump} in the tangential velocity across the boundary, not its magnitude in either phase.  Nonetheless, we are able to prove that, for variational solutions whose level sets are monotone Lipschitz graphs in $x$, an {averaged decay bound} does hold; see Section~\ref{energy bound section} and Theorem~\ref{energy bound theorem}. In particular, we show that the limiting stream function obeys
\begin{equation}
  \label{intro key energy bound}
  \frac{1}{r^2} \int_{B_r} |\nabla \psi|^2 \, dx \, dy = O(r) \qquad \textrm{as } r \searrow 0.
\end{equation}
To our knowledge, this result is new and of some independent interest. Note that an application of the well-known Alt--Caffarelli--Friedman monotonicity formula here would give roughly that
\[
	 \left(\frac{1}{r^2} \int_{B_r \cap \{\psi > 0\}} |\nabla \psi|^2 \, dx \, dy \right) \left(\frac{1}{r^2} \int_{B_r \cap \{\psi < 0\}} |\nabla \psi|^2 \, dx \, dy \right) \searrow 0 \qquad \textrm{as } r \searrow 0.
\]
At a double stagnation point, formally speaking, we expect both of the factors above would limit to $0$ as $r \searrow 0$. The estimate in~\eqref{intro key energy bound} is more useful for our purposes as it gives an explicit upper bound on the decay of the average kinetic energies in both phases, rather than simply saying their product vanishes.

Using \eqref{intro key energy bound}, in Section~\ref{lower bound section} we carry out a blowup argument similar to~\cite{varvaruca2011geometric}. Ultimately, we find that the limiting free boundary is globally $C^1$, and it is asymptotically flat at the origin and the normal vector is purely vertical. It then follows from a comparison principle argument due to Oddson~\cite{oddson1968boundary}, that there is a point-wise \emph{lower bound} on the decay of the gradient: for any $0 < \varepsilon \ll 1$, there exists a constant $C_\varepsilon > 0$ such that
\[
	| \psi_\pm(x,y) | \geq C_\varepsilon  |(x,y)|^{1+\varepsilon} \qquad \textrm{on }\mathcal{D}_\varepsilon^\pm \cap B_R,
\]
where $\mathcal{D}_\varepsilon^\pm$ is a cone oriented along the $\pm y$-axis with aperture angle $\nearrow \pi$ as $\varepsilon \searrow 0$, and $0 < R \ll 1$.  We prove in Section~\ref{proof of the main theorem section} that this lower bound is incompatible with the upper bound in~\eqref{intro key energy bound}. Having arrived at a contradiction, we infer that in fact overturning~\eqref{overturning} must indeed occur.

The proof of Theorems~\ref{non-boussinesq gravity current theorem} and~\ref{boussinesq gravity current theorem} is carried out in Section~\ref{gravity current section}. Our main tools are the same: we introduce a model free boundary problem for~\eqref{gravity current problem}, and an associate notion of domain variational solutions. In Theorem~\ref{limiting gravity current theorem}, we prove that if overturning does not occur along $\cmd$, then one can extract a subsequence converging to a domain variational solution. Moreover, this limiting solution will have as its free boundary the graph of a Lipchitz continuous monotonically decreasing function, and the energy decays like~\eqref{intro key energy bound}. The main new idea here is looking at an auxiliary problem where the upper and lower walls are removed and the fluid is extended as uniform laminar flow, with the velocity matching the upstream limit. Doing so, allows us to use elliptic theory to control the Lipschitz norm of $\psi$ even as the interface draws arbitrarily close to the upper wall. 

We then introduce a Vărvăruca--Weiss-style monotonicity formula tailored to the model gravity current problem, which enables us to carry out a blowup analysis, characterizing the structure of the flow in a neighborhood of the contact point. This is done for both the non-Boussinesq and Boussinesq cases. Based on that local information, we give in Theorem~\ref{contact angle theorem} a partial proof of von Karman's conjecture; the statements in Theorems~\ref{non-boussinesq gravity current theorem} and~\ref{boussinesq gravity current theorem} then come as a simple corollary.

\section{Preliminaries}

\subsection{Notation}

Throughout the paper, we denote one-dimensional Hausdorff measure by $\mathcal{H}^1$ . The ball of radius $r$ centered at $z \in \mathbb{R}^2$ is written $B_r(z)$; when $z=0$, we abbreviate this to $B_r$. Likewise, 
\[
	B_r^\pm \colonequals \left\{ (x,y) \in B_r : \pm y > 0 \right\}
\]
are the upper and lower half balls of radius $r$ centered at the origin. We denote by $\nu$ the outer unit normal to a set, either in the classical sense if the boundary is $C^1$, or the measure theoretic sense if the set is of finite perimeter. 

For $D \subset \mathbb{R}$ or $\mathbb{R}^2$, $k \geq 0$, and $\alpha \in (0,1)$, the space $C^{k+\alpha}(D)$ consists of $k$-times Hölder continuous functions of exponent $\alpha$ on $D$. We use $\Lip(D)$ as notation for the set of Lipschitz continuous function on $D$, which as usual means the set of continuous functions on $D$ for which the Lipschitz seminorm is finite:
\[
	[u]_{\Lip; D} \colonequals \sup_{\substack{z_1, z_2 \in D \\ z_1 \neq z_2}} \frac{|u(z_1) - u(z_2)|}{|z_1 - z_2|} < \infty.
\]
Finally, we write $H^k(D)$ for the standard $k$-th order $L^2$-based Sobolev space on $D$. For any of these spaces, a subscript of ``c'' indicates the subset of functions whose support is compactly contained inside $D$. When $D$ is unbounded, for clarity we use a subscript of ``$\bdd$'' to indicate functions that are uniformly bounded in that their norm on $D$ is finite. Conversely, a subscript of ``$\loc$'' denotes the set of functions whose norm is bounded on each compact subset of $D$. Thus, for example, 
\[
	C_\bdd^{k+\alpha}(D) \colonequals \left\{ u \in C_\loc^{k+\alpha}(D) : \| u \|_{C^{k+\alpha}(D)} < \infty\right\},
\]
where $\| \placeholder \|_{C^{k+\alpha}(D)}$ is the standard Hölder norm on $D$.

\subsection{Some tools from elliptic theory}

For later reference, we record here two important results from elliptic theory that will be used at multiple points in the analysis. First, we recall the classical monotonicity formula of Alt, Caffarelli, and Friedman~\cite[Lemma 5.1]{alt1984variational}. Here, we give only the two-dimensional case, as that is all we need.

\begin{theorem}[ACF monotonicity formula]
\label{acf theorem}
Suppose  that $u_1, u_2 \in C^0(B_1)$ are nonnegative subharmonic function such that $u_1 u_2 = 0$ on $B_1$. For $0 < r < 1$, define the function
\[
	\Phi(u_1, u_2; r) \colonequals \left(\frac{1}{r^2} \int_{B_r} |\nabla u_1|^2 \, dx \, dy \right) \left(\frac{1}{r^2} \int_{B_r} |\nabla u_2|^2 \, dx \, dy \right).
\]
Then $\Phi(u_1, u_2; \placeholder)$ is monotonically decreasing and the right-hand limit $\Phi(u_1, u_2; 0+)$ exists and is finite.
\end{theorem}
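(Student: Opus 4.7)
The plan is to implement the classical argument of Alt, Caffarelli, and Friedman, which in two dimensions reduces to a short computation organized around the one-dimensional Poincaré inequality on arcs of $S^1$. Reading the statement as asserting that $r \mapsto \Phi(u_1,u_2;r)$ is non-decreasing (the standard formulation), existence and finiteness of $\Phi(u_1,u_2;0+)$ then follow immediately as the infimum over $r \in (0,1)$ of a nonnegative function.

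First, by the coarea formula applied to the nonnegative $L^1_\loc$ integrand $|\nabla u_j|^2$, the quantity $I_j(r) := \int_{B_r} |\nabla u_j|^2\,dx\,dy$ is absolutely continuous, with $I_j'(r) = \int_{\partial B_r} |\nabla u_j|^2\,d\mathcal{H}^1$ for a.e.\ $r \in (0,1)$. Green's identity combined with $u_j \geq 0$ and $\Delta u_j \geq 0$ then yields the fundamental trace inequality
\[
I_j(r) \;\leq\; \int_{\partial B_r} u_j \, (\partial_\nu u_j) \, d\mathcal{H}^1.
\]

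The analytic heart of the argument is eigenvalue-based. On $\partial B_r$ let $\Omega_j(r) = \{u_j > 0\} \cap \partial B_r$, denote by $L_j(r) \in [0, 2\pi]$ the angular measure of $\Omega_j(r)$, and set $\alpha_j(r) := \pi/L_j(r)$, so that $\alpha_j^2$ is the first Dirichlet eigenvalue of $-\partial_\theta^2$ on an arc of angular length $L_j$. Because $u_1 u_2 \equiv 0$, the sets $\Omega_j(r)$ are disjoint, so $L_1(r) + L_2(r) \leq 2\pi$. Decomposing $|\nabla u_j|^2 = (\partial_r u_j)^2 + r^{-2}(\partial_\theta u_j)^2$, pairing Cauchy--Schwarz applied to the trace bound with the one-dimensional Dirichlet--Poincaré inequality on each arc, and optimizing via Young's inequality with a free parameter, I would derive the pointwise estimate
\[
\frac{I_j'(r)}{I_j(r)} \;\geq\; \frac{2\alpha_j(r)}{r} \qquad \text{for a.e.\ } r \text{ with } I_j(r) > 0.
\]
The final step is the two-dimensional Friedland--Hayman inequality: since $L_1 + L_2 \leq 2\pi$, the convexity of $L \mapsto \pi/L$ (or AM--HM) gives $\alpha_1 + \alpha_2 \geq 4\pi/(L_1+L_2) \geq 2$. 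Hence at points where both $I_j > 0$,
\[
\frac{d}{dr}\log \Phi(r) \;=\; -\frac{4}{r} + \frac{I_1'(r)}{I_1(r)} + \frac{I_2'(r)}{I_2(r)} \;\geq\; \frac{2}{r}\bigl(\alpha_1(r) + \alpha_2(r) - 2\bigr) \;\geq\; 0.
\]
On the (possibly empty) set where $I_j \equiv 0$ for some $j$, one has $\Phi \equiv 0$ there and monotonicity is trivial.

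The main technical obstacle is obtaining the eigenvalue-weighted differential inequality $I_j'/I_j \geq 2\alpha_j/r$. This demands a careful interplay between Cauchy--Schwarz applied to $\int u_j \partial_\nu u_j$, the Poincaré inequality on $\Omega_j(r)$, and a Young's inequality whose free parameter must be tuned to balance the radial and angular contributions to $I_j'$. A secondary annoyance is that $\alpha_j$ may have jump discontinuities in $r$ where the topology of $\Omega_j(r)$ changes; however, $I_j$ remains absolutely continuous, and since the key estimate is pointwise in $r$, the logarithmic monotonicity of $\Phi$ can be recovered by integrating over the (at most countable collection of) intervals on which everything is smooth.
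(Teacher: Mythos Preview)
The paper does not supply its own proof of this theorem: it is recorded as a classical tool with references to \cite{alt1984variational} and \cite{caffarelli2002monotonicity}, so there is nothing to compare against. Your sketch is the standard two-dimensional ACF argument, and the steps are correct: the trace bound from Green's identity and subharmonicity, the Poincar\'e inequality on arcs giving $I_j'/I_j \ge 2\alpha_j/r$ (your AM--GM/Young optimization is fine, and the case of disconnected $\Omega_j(r)$ only improves the constant), and the convexity step $\pi/L_1+\pi/L_2 \ge 4\pi/(L_1+L_2) \ge 2$ in place of Friedland--Hayman.

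You were right to read ``monotonically decreasing'' as a slip for non-decreasing; the paper's own applications (e.g.\ bounding $\Phi(0+)$ above by $\Phi(R)$ in the proof of Lemma~\ref{limiting solution lemma}, and the display $\Phi \searrow$ as $r \searrow 0$ in Section~\ref{intro outline section}) confirm this. With that reading, finiteness of $\Phi(0+)$ follows as you say.
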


We mention that the original statement in~\cite{alt1984variational} assumes both $u_1$ and $u_2$ are harmonic, but this can easily be generalized to subharmonic functions as noted, for example, in~\cite{caffarelli2002monotonicity}. The utility of Theorem~\ref{acf theorem} has already been discussed in Section~\ref{intro outline section}. We will typically apply it with $u_1$ and $u_2$ being $|\psi_1|$ and $|\psi_2|$, each extended by zero so they are defined on $\overline{\fluidD}$.

Next, we give a slight restatement of a classical work of Oddson~\cite{oddson1968boundary}. This phrasing can be found, for example, in~\cite[Proposition 5.12]{varvaruca2009extreme}. One can understand this result as a quantified Hopf edge-point lemma specialized to conical domains. 

\begin{theorem}[Oddson]
  \label{oddson theorem}
  Consider the truncated conical region 
  \[
    \mathcal{D}_{\mu,R} \colonequals \left\{ (x, y) : 0 < x^2+y^2 < R^2,~ |\theta-\tfrac{\pi}{2}| < \tfrac{\pi}{2\mu} \right\} \subset \mathbb{R}^2,
  \]
  where $\theta = \arg{(x,y)}$, and $\mu > 1$ and $R > 0$ are fixed. Suppose that $u \in C^2(\mathcal{D}_{\mu,R}) \cap C^0(\overline{\mathcal{D}_{\mu,R}})$ satisfies
  \[
    \left\{ \begin{aligned}
      \Delta u & \leq 0 & \qquad&  \textrm{on } \mathcal{D}_{\mu,R} \\
      u & > 0 & \qquad & \textrm{on } \mathcal{D}_{\mu,R} \setminus \{ 0\} \\
      u & = 0 & \qquad & \textrm{at } (0,0). 
    \end{aligned} \right.
  \]
  Then,
  \begin{equation}
    \label{oddson bound}
    u(x,y) \geq C |(x,y)|^{\mu} \cos{\left(\mu (\theta - \pi/2)\right)} \qquad \textrm{in } \mathcal{D}_{\mu,R}
  \end{equation}
  for a constant $C > 0$ depending only on $\mu$.
\end{theorem}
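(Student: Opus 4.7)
The approach I would take is a comparison principle argument against an explicit harmonic barrier. Define
\[
v(x,y) \colonequals r^\mu \cos\bigl(\mu(\theta - \tfrac{\pi}{2})\bigr), \qquad r = |(x,y)|, \ \theta = \arg(x,y),
\]
which is $\operatorname{Re}\bigl((-iz)^\mu\bigr)$ on a suitable branch. A direct polar-coordinate calculation using $\Delta = r^{-1}\partial_r(r\partial_r) + r^{-2}\partial_\theta^2$ gives $\Delta v \equiv 0$ in $\mathcal{D}_{\mu,R}$. By inspection, $v$ vanishes at the vertex and along both lateral rays $|\theta - \pi/2| = \pi/(2\mu)$, is strictly positive in the open cone, and attains its maximum $R^\mu$ at the apex $(0,R)$ of the arc; thus $v$ has precisely the boundary structure any minorant of $u$ must exhibit.

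The key steps in order would be as follows. First, set $w \colonequals u - Cv$ with a constant $C > 0$ to be chosen small, and note that $\Delta w \leq 0$ in $\mathcal{D}_{\mu,R}$ because $\Delta u \leq 0$ and $\Delta v = 0$. Second, apply the minimum principle for superharmonic functions: to conclude $w \geq 0$ throughout $\mathcal{D}_{\mu,R}$, which is exactly \eqref{oddson bound}, it suffices to check $w \geq 0$ on $\partial \mathcal{D}_{\mu,R}$. On the two lateral rays $v$ vanishes and $u \geq 0$ by continuity, so $w \geq 0$; at the vertex $w(0,0) = 0$. Third, secure the remaining inequality on the closing arc $\{r = R,\, |\theta - \pi/2| \leq \pi/(2\mu)\}$, which reduces to
\[
u(R\cos\theta, R\sin\theta) \geq C R^\mu \cos\bigl(\mu(\theta - \tfrac{\pi}{2})\bigr).
\]
On any compact sub-arc $\{|\theta - \pi/2| \leq \pi/(2\mu) - \varepsilon\}$, $u$ is continuous and strictly positive by the open-cone hypothesis, so has a uniform positive minimum, and picking $C$ sufficiently small secures the inequality on this sub-arc. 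A scaling reduction in $R$ normalizes everything so that the final constant depends only on $\mu$.

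The step I expect to be the main obstacle is uniformly securing the arc inequality near the two corner points where the closing arc meets the lateral rays. Both sides vanish there, and one must match their rates of decay to zero. My plan is to apply Hopf's boundary point lemma on the smooth interior of each lateral ray: $u$ is nonnegative and superharmonic, and not identically zero by strict positivity in the open cone, so the strong maximum principle together with Hopf force $u$ to grow at least linearly into the cone off any zero it possesses on the interior of a lateral ray. Since $\cos(\mu(\theta - \pi/2))$ vanishes only linearly as $\theta \to \pi/2 \pm \pi/(2\mu)$, this matches the decay of the right-hand side and a single small choice of $C$ works uniformly near both corners. With $w \geq 0$ on all of $\partial \mathcal{D}_{\mu,R}$ established, the minimum principle for superharmonic functions then delivers the claimed estimate \eqref{oddson bound}.
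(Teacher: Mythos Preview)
The paper does not prove this result; it is cited from Oddson via \cite[Proposition~5.12]{varvaruca2009extreme}. Your barrier-and-comparison approach is the standard one and is correct in outline, but two points need repair. The claim that $C$ depends only on $\mu$ is false: taking $u = \varepsilon v$ for small $\varepsilon > 0$ shows $C$ must depend on $u$; your scaling removes the $R$-dependence but not the $u$-dependence. (This is an imprecision in the paper's statement as well; in the applications only some positive $C = C(u, \mu, R)$ is needed.)

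More seriously, the corner issue you flag is a symptom of a deeper problem. Under the hypothesis as literally written---positivity of $u$ only on the \emph{open} cone---the result is actually false: solve the Dirichlet problem for a harmonic $u$ on $\mathcal{D}_{\mu,R}$ with continuous boundary data equal to $0$ on the lateral rays and equal on the arc to a nonnegative function that vanishes at the apex $(0,R)$ and at the two corners. By the strong maximum principle such a $u$ is strictly positive in the open cone and satisfies every hypothesis, yet the conclusion fails at $(0,R)$. Your Hopf argument on the lateral rays cannot rescue this, since it says nothing about vanishing of $u$ at an interior point of the arc. The intended hypothesis---and the one actually invoked in the paper's application (see the proof of Corollary~\ref{lower bound decay lemma}, where $u$ is ``non-vanishing on $\overline{\mathcal{D}_{\mu,R}} \setminus \{0\}$'')---is $u > 0$ on the closure minus the vertex. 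With this, the corner issue dissolves entirely: at the arc--ray corners $v = 0$ while $u > 0$, so $u/v \to +\infty$ there, the infimum of $u/v$ over the arc is strictly positive, and taking $C$ equal to this infimum finishes the comparison without any appeal to Hopf.
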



\subsection{The curves \texorpdfstring{$\cme$}{C\_elev} and \texorpdfstring{$\cmd$}{C\_depr}}
\label{preliminaries existence section}

For the convenience of the reader, we record here the results from~\cite{chen2023global} establishing the existence and qualitative features of the two families of large-amplitude bores that form the starting point for the analysis in the present paper. In keeping with the notation in~\cite{chen2023global}, we write $\cm^+$ for $\cmd$ and $\cm^-$ for $\cme$; the sign indicates whether the upstream depth of the lower layer is larger ($+$) or smaller ($-$) than the trivial solution. 

\begin{theorem}[Large-amplitude bores] 
\label{global bore theorem} 
Fix an $\alpha \in (0,1)$ and densities $0 < \rho_2 < \rho_1$.  There exist global $C^0$ curves
\[ \cm^\pm = \left\{ (\psi(s), \eta(s), h_1(s)) : \pm s \in (0,\infty) \right\}\]
 of solutions to the internal wave problem \eqref{eqn:stream} with $h_1 = \lambda(s)$, $h_2 = 1-h_1$, and $F$ given by \eqref{definition front Froude number}.   They enjoy the  H\"older regularity  
 \[ \psi(s) \in C_\bdd^{2+\alpha}(\overline{\fluidD_1(s)}) \cap C_\bdd^{2+\alpha}(\overline{\fluidD_2(s)}) \cap C_\bdd^0(\overline{\fluidD(s)}), \qquad \eta(s) \in C_\bdd^{2+\alpha}(\mathbb{R}), \]
 where $\fluidD(s)$ denotes the fluid domain corresponding to $\eta(s)$ and $\lambda(s)$.  
\begin{enumerate}[label=\rm(\alph*)]
\item \label{global bore monotone part} \textup{(Strict monotonicity)} Each solution on $\cm^\pm$ is a strictly monotone bore:
\begin{equation}
  \begin{aligned}
    \pm \partial_x \eta(s) &< 0 & \qquad &  \textrm{on } \mathbb{R}, \\
    \pm \partial_x \psi_i(s) & < 0 & \qquad & \textrm{in } \fluidD_i(s) \cup \fluidS(s), \\
    \partial_y \psi_i(s) & < 0 & \qquad & \textrm{in } \overline{\fluidD_i(s)},
  \end{aligned} \qquad \textrm{for } i = 1,2. \label{monotonicity Euler variables}
\end{equation}
\item \textup{(Elevation limit)} \label{elevation part} In the limit along $\cm^-$, either \emph{overturning} occurs in that 
  \begin{equation}
    \limsup_{s \to -\infty} \| \partial_x \eta(s) \|_{L^\infty(\mathbb{R})} = \infty, \label{overturning} 
  \end{equation}
or else a double stagnation point develops: 
\begin{equation}
\label{double stagnation alternative}
  \begin{gathered}
    \eta(s) \longrightarrow \eta^* \in \Lip(\mathbb{R}) \quad \textrm{in $C_\loc^\varepsilon$ for all } \varepsilon \in (0,1) \\
    \textrm{and} \\
    \inf_{\fluidS(s)} \left( |\nabla \psi_1(s)| + |\nabla \psi_2(s)|\right) \longrightarrow 0. 
  \end{gathered}
\end{equation}
	\item \textup{(Depression limit)} \label{depression part} In the limit along $\cm^+$, either \emph{overturning} occurs or the interface comes arbitrarily close to the upper wall: 
  \begin{equation}
    \limsup_{s \to \infty} \| \partial_x \eta(s) \|_{L^\infty(\mathbb{R})} = \infty \quad \textrm{or} \quad \lim_{s \to \infty} h_1(s) = 1. \label{depression limit alternatives} 
  \end{equation}
  In either case, 
  \[
  	\lim_{s \to \infty} \sup_{\fluidS(s)} \partial_y \psi_2(s) = 0.
  \]
\end{enumerate}
\end{theorem}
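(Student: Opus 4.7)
The plan is to follow the standard global bifurcation program, then use the geometric structure of the two-layer problem to promote the abstract alternatives to the concrete ones in~\eqref{overturning}--\eqref{depression limit alternatives}. First, I would recast~\eqref{eqn:stream} in a fixed-domain formulation by introducing a partial hodograph / Dubreil-Jacotin-type change of variables in each layer separately, using $\psi$ itself as the vertical coordinate. This flattens $\fluidD_1(s)$ and $\fluidD_2(s)$ to fixed infinite strips and converts the Bernoulli condition~\eqref{eqn:stream:dynamic} to a nonlinear but local boundary condition on a single interface. The resulting system, together with the asymptotic conditions~\eqref{eqn:stream:asym}, fits into a functional-analytic framework on a pair of H\"older spaces $\Xspace,\Yspace$ encoding uniform $C_\bdd^{2+\alpha}$ regularity on the strips and matching decay toward the upstream laminar flow. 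The trivial line consists of the laminar flows $\eta\equiv 0$, parameterized by $h_1\in(0,1)$.

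Next, I would verify the hypotheses of the analytic global bifurcation theorem of Buffoni--Toland (or its topological analogue, Dancer): compactness of the nonlinear map as a map between the chosen spaces, Fredholm index zero of the linearization at a trivial flow, and analyticity. A conjugate-flow computation (cf.~\cite{Laget1997interfacial}) identifies the unique critical Froude number as~\eqref{definition front Froude number} and, crucially, provides the transversal crossing of a simple kernel at a particular value $h_1=h_1^*$; this is the germ of the local bifurcation. The local curve opens in two directions, giving the starting arcs of $\cm^+$ and $\cm^-$. Continuing analytically, we extract global curves $\cm^\pm=\{(\psi(s),\eta(s),h_1(s)): \pm s\in(0,\infty)\}$ with the alternative: either the parameterization blows up in norm (loss of compactness), the linearized operator loses Fredholmness, or the curve forms a closed loop. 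Standard arguments in the spirit of~\cite{constantin2011analyticity,constantin2016global,chen2018existence} rule out loops using the monotonicity structure, so the remaining task is to interpret the blow-up alternative.

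The monotonicity claim~\ref{global bore monotone part} is the backbone that makes this translation possible. At the bifurcation point, explicit computation of the kernel gives a nodal pattern (the eigenfunction is sign-definite in each phase), and I would propagate this pattern along $\cm^\pm$ by a nodal-set / maximum-principle argument: so long as the solution remains a classical graph, $\partial_x\eta$ cannot vanish on all of $\mathbb{R}$, and any local extremum of $\partial_x\psi_i$ or $\partial_y\psi_i$ on $\overline{\fluidD_i(s)}$ is precluded by the Hopf lemma combined with the decay at $x=\pm\infty$. The main obstacle is handling the two-phase aspect: the sign pattern must be consistent across $\fluidS(s)$, which requires using the dynamic condition~\eqref{eqn:stream:dynamic} as a transmission relation, much as in~\cite{chen2023global}.

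For parts~\ref{elevation part} and~\ref{depression part}, I would translate the norm blow-up into one of the listed geometric degenerations by a Schauder / elliptic regularity bootstrap: as long as $\eta(s)$ remains a uniformly Lipschitz graph \emph{and} $|\nabla\psi_1(s)|+|\nabla\psi_2(s)|$ stays bounded below on $\fluidS(s)$, the interface can be straightened locally and the two-phase elliptic system admits uniform $C^{2+\alpha}$ estimates, precluding loss of compactness. Therefore blow-up forces at least one of (i) $\|\partial_x\eta(s)\|_{L^\infty}\to\infty$, or (ii) the uniform stagnation estimate to fail. A separate argument using the strict monotonicity in~\ref{global bore monotone part}, Bernoulli's law, and the level set $y=F^2/2$ (where the right-hand side of~\eqref{eqn:stream:dynamic} allows $|\nabla\psi_i|$ to be simultaneously small) shows that in the $\cm^-$ case, a one-sided stagnation cannot occur without the double stagnation in~\eqref{double stagnation alternative}; while on $\cm^+$, the monotonicity forces the degeneration to occur at the upper wall, giving $h_1(s)\to 1$ and the stated vanishing of $\partial_y\psi_2(s)$ on $\fluidS(s)$ via a Hopf-type comparison. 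The hardest step is excluding spurious alternatives (loops, and stagnation at interior points away from the critical height) using only the global monotonicity; this is where two-phase ACF-style comparison (Theorem~\ref{acf theorem}) and a careful analysis of the asymptotic Dirichlet problem at $x=\pm\infty$ do the work.
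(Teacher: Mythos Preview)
The paper does not prove Theorem~\ref{global bore theorem}; it is recalled from \cite{chen2023global} as the starting point for the present analysis, with no proof given here. Your proposal is a faithful high-level outline of the strategy in that reference---Dubreil--Jacotin variables to fix the domain, analytic global bifurcation from the trivial laminar family, propagation of nodal/monotonicity structure via maximum principles and the transmission condition~\eqref{eqn:stream:dynamic}, and conversion of the abstract blow-up alternative into the geometric ones using uniform Schauder estimates conditional on a Lipschitz graph and non-stagnation---so as a sketch it is essentially correct and aligned with the cited source.

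One point worth flagging: for \emph{fronts} the functional-analytic setup is genuinely more delicate than your sketch suggests, because the solutions are heteroclinic rather than homoclinic or periodic. The upstream and downstream limiting states are distinct laminar flows, so the function spaces must encode two different asymptotic states, and the ``trivial line'' and local bifurcation picture are not quite the standard simple-eigenvalue crossing you describe. In \cite{chen2023global} this is handled by working with a carefully chosen background flow and weighted spaces that capture the front structure; your sentence about ``matching decay toward the upstream laminar flow'' understates this, and the conjugate-flow analysis enters not only to fix $F$ via~\eqref{definition front Froude number} but also to pin down the admissible downstream states (cf.\ Lemmas~\ref{conjugate flow lemma} and~\ref{conjugate flow lemma stagnant} in the present paper). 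This is not a gap in your overall plan, but it is where most of the technical weight in the actual proof lies.
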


\begin{theorem}
  Theorem~\ref{global bore theorem} continues to hold in the Boussinesq limit where $\rho_2 = \rho_1$ and \eqref{eqn:stream:dynamic} is replaced by \eqref{eqn:stream:dynamic-boussinesq}, except that the double stagnation alternative \eqref{double stagnation alternative} for elevation bores is replaced by 
    $\lim_{s \to \infty} h_1(s)=0$,
  i.e.~that the interface comes arbitrarily close to the lower wall.
  \begin{proof}
    In the context of \eqref{eqn:stream}, the Boussinesq limit is not particularly singular, and most of the arguments in \cite{chen2023global} go through with essentially no modifications. Some care needs to be taken, though, when working with variational or weak formulations of the problem. These involve terms which are formally motivated by the expression
    \begin{align}
      \label{gravitational potential energy}
      V \colonequals \int_{\fluidD_1} \frac{\rho_1}{F^2} y\, dx\, dy
      + 
      \int_{\fluidD_2} \frac{\rho_2}{F^2} y\, dx\, dy,
    \end{align}
    for the total gravitational potential energy, where for the moment we ignore convergence issues related to the unboundedness of the fluid layers $\fluidD_1,\fluidD_2$. Substituting \eqref{definition front Froude number} and sending $\rho_2 \to \rho_1$, the integrands in \eqref{gravitational potential energy} become infinite. To avoid this, one splits the right hand side into a term coming from the average density $(\rho_1+\rho_2)/2$ and a remainder,
    \begin{align*}
      V 
      = 
      \int_{\fluidD_1 \cup \fluidD_2} \frac{\rho_1+\rho_2}{2F^2} y\, dx\, dy
      + \int_{\fluidD_1} \frac{\rho_1-\rho_2}{2F^2} y\, dx\, dy
      + \int_{\fluidD_2} \frac{\rho_2-\rho_1}{2F^2} y\, dx\, dy.
    \end{align*}
    The first integral is now independent of the position of the interface, and so from a variational point of view can be discarded. Substituting \eqref{definition front Froude number} into the second two terms and sending $\rho_2 \to \rho_1$ we obtain a more sensible limit
    \begin{align}
      \label{boussinesq potential energy}
      V' \colonequals
      +\int_{\fluidD_1} 2\rho_1 y\, dx\, dy
      - \int_{\fluidD_2} 2\rho_1 y\, dx\, dy.
    \end{align}

    Based on this physical reasoning, one guesses that in the Boussinesq limit the Lagrangian density $\mathscr L$ in section~5.2 of \cite{chen2023global} should be replaced with
    \begin{align*}
      \mathscr L(p,z,\xi,\lambda) = \rho_1 \left[ h^2 \frac{1+\xi_1^2}{2(h+\xi_2)^2} + \frac 12 \mp 2(hp+z) \right] (h+\xi_2)
    \end{align*}
    with the $\mp$ resolving to $-$ in the lower layer and $+$ in the upper layer, and that a similar modification should made to the second weak formulation used in section~5.5. It is then relatively straightforward to verify that these are indeed valid formulations of the Boussinesq system \eqref{eqn:psi harmonic}--\eqref{eqn:stream:dynamic-boussinesq}, and that the conjugate flow arguments and velocity estimates in \cite[sections~5.3 and 5.5]{chen2023global} are unaffected. 
    
    Note that in the non-Boussinesq case along $\cme$, the situation where $h_1 \to 0$ was ruled out in~\cite[Lemma 5.22]{chen2023global}. That argument, however, relied on the Bernoulli constant being nonzero, and hence it does not apply in the Boussinesq setting.
  \end{proof}
\end{theorem}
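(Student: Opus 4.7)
The plan is to revisit the proof of Theorem~\ref{global bore theorem} from \cite{chen2023global} and verify, step by step, that every argument carries over to the Boussinesq regime after a minimal rewriting of the variational framework. The pointwise and elliptic parts of the proof --- strict monotonicity via moving-plane comparisons, classical regularity, maximum principle arguments for $\partial_x \psi_i$ and $\partial_y \psi_i$, and the global continuation machinery itself --- depend only on the divergence-form structure and the sign of the dynamic boundary condition. These are insensitive to whether $\rho_1 = \rho_2$ and should require no changes.

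The delicate point is the variational/weak formulation, because the naive total gravitational potential energy
\[
V = \int_{\fluidD_1} \frac{\rho_1}{F^2} y\, dx\, dy + \int_{\fluidD_2} \frac{\rho_2}{F^2} y\, dx\, dy
\]
develops divergent coefficients when $\rho_2 \to \rho_1$ with $F$ given by \eqref{definition front Froude number}. My first step would be to regularize $V$ by separating off the contribution of the average density $(\rho_1+\rho_2)/2$, which depends only on the total fluid volume (more precisely, on the quantity $\int_{\fluidD_1\cup\fluidD_2} y\,dx\,dy$ cut off appropriately) and is therefore variationally inert. The remainder has a finite Boussinesq limit
\[
V' = \int_{\fluidD_1} 2\rho_1 y\, dx\, dy - \int_{\fluidD_2} 2\rho_1 y\, dx\, dy.
\]
Substituting this into the Lagrangian density $\mathscr L$ of \cite[\S5.2]{chen2023global}, and performing the analogous substitution in the second weak formulation of \cite[\S5.5]{chen2023global}, yields modified densities whose Euler--Lagrange equations are precisely \eqref{eqn:psi harmonic}--\eqref{eqn:stream:kinint}, \eqref{psi value on top}, together with \eqref{eqn:stream:dynamic-boussinesq}. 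Once this identification is verified, the conjugate flow analysis of \cite[\S5.3]{chen2023global} and the velocity estimates of \cite[\S5.5]{chen2023global} can be transcribed with the regularized Lagrangian, since their inputs (compactness, coercivity in the stream function, and the tangential jump condition) are preserved by the regularization.

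Finally, one has to re-examine the enumeration of limiting alternatives along $\cme$. In the non-Boussinesq setting, \cite[Lemma 5.22]{chen2023global} ruled out the possibility $h_1 \to 0$ by an argument that crucially invokes the nonvanishing of the Bernoulli constant appearing on the right-hand side of~\eqref{eqn:stream:dynamic}. The Boussinesq dynamic condition~\eqref{eqn:stream:dynamic-boussinesq} has vanishing right-hand side at $y=0$, so this exclusion argument breaks down, and $h_1 \to 0$ must be added as a genuine alternative in place of the double-stagnation alternative~\eqref{double stagnation alternative}. I expect this to be the main obstacle: one has to audit the entire argument in~\cite{chen2023global} to confirm that no other step tacitly relies on the Bernoulli constant being nonzero, and to check that the stagnation alternative really is subsumed into $h_1 \to 0$ under the rescaled energies. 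The remaining alternatives along $\cmd$ (overturning, or the interface approaching the upper wall) follow from uniform gradient and interface-slope bounds which, as in the previous paragraph, survive the Boussinesq regularization intact.
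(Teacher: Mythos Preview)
Your proposal is correct and follows essentially the same approach as the paper: you identify the same regularization of the gravitational potential energy by subtracting off the variationally inert average-density term, arrive at the same limiting functional $V'$, propose the same modification to the Lagrangian density and weak formulation, and pinpoint the same reason (vanishing Bernoulli constant in \eqref{eqn:stream:dynamic-boussinesq}) that the exclusion of $h_1 \to 0$ via \cite[Lemma~5.22]{chen2023global} fails. Your additional remark that one must confirm the double-stagnation alternative is genuinely absorbed into $h_1 \to 0$ is a reasonable caveat, though the paper leaves this implicit as well.
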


\section{Overturning bores}
\label{overturning section}

\subsection{Model free boundary problem for internal waves}
\label{model problem section}

We study the following two-phase free boundary elliptic PDE, which serves as a prototype for the limiting bore problem along $\cmd$:
\begin{equation}
\label{elliptic PDE}
  \left\{ \begin{aligned}
    \Delta u & = 0 & \qquad & \textrm{in } \Omega^+(u) \cap B_1  \\
    \Delta u & = 0 & \qquad & \textrm{in } \Omega^-(u)  \cap B_1 \\
    | \nabla u_+ |^2 - | \nabla u_-|^2 & = -\jump{\rho} y & \qquad & \textrm{on } \partial \Omega^+(u) \cap B_1,
  \end{aligned} \right.
\end{equation}
where $\rho_+ \neq \rho_-$ are distinct positive constants, $\jump{\rho} \colonequals \rho_+-\rho_-$, and
\[
	\Omega^+(u) \colonequals \inter{\{ u \geq 0 \}}, \quad 
	\Omega^-(u) = \{ u < 0 \}, \quad 
	u_\pm \colonequals 
		\left\{ \begin{aligned}
			u & \qquad \textrm{on } \Omega^\pm(u) \\
			0 & \qquad \textrm{on } \Omega^\mp(u).
		\end{aligned} \right.
\]
The above problem is obtained from the (pseudo) stream function formulation of the Euler equations~\eqref{eqn:stream} by normalizing several constants to unity and shifting the axes. We have taken as the unknown $u = -\psi$, since $\mp \psi_\pm > 0$ on $\fluidD_\pm$.  While~\eqref{elliptic PDE} may arise in other contexts, we will continue to refer to the sets $\Omega_+(u)$ and $\Omega_-(u)$ as the ``phases'' or ``fluids.'' It is interesting to note that all of the results in Section~\ref{model problem section}--\ref{lower bound section} hold whenever $\rho_+ \neq \rho_-$; in particular, they are not restricted to stably stratified internal waves. The choice to let $\Omega^+(u)$ be the interior of the set where $u$ is nonnegative is motivated by the situation along $\cmd$, where the upper fluid tends to stagnation. 

The \emph{free boundary} of $u$ is defined to be the set 
\[
	\Gamma(u) \colonequals \partial \Omega^+(u) \setminus \partial B_1.
\]
We are interested in determining the regularity of $\Gamma(u)$ near $y = 0$, where it is possible a priori that there may be a double stagnation point on the boundary.  In \cite{chen2023global}, we arrive at a problem similar to \eqref{elliptic PDE}, but in a very specific way that justifies making additional assumptions.  In particular, we will always suppose that the zero-set of $u$ is the graph of a Lipschitz continuous function $\eta$.  Without loss of generality, pick axes so that $\eta(0) = 0$, meaning the origin is on the boundary.   

Following the general idea of \cite{varvaruca2011geometric}, we introduce the following notion of a solution to~\eqref{elliptic PDE} that is well-adapted to analysis via monotonicity functions.

\begin{definition}
\label{variational solution definition}
A function $u \in H^1(B_1) \cap C^0(B_1)$ is said to be a \emph{(domain) variational solution} to the free boundary elliptic PDE~\eqref{elliptic PDE} provided $u_\pm \in C^2(\Omega^\pm(u))$ and $u$ is critical point with respect to domain variations of the functional
\[
	E(v) \colonequals \int_{B_1} \left( | \nabla v|^2 -  y \left( \rho_+ \chi^+(v) + \rho_-  \chi^-(v) \right) \right) \, dx \, dy,
\]
where $\chi^\pm(v) \colonequals \chi_{\Omega^\pm(v)}$. More precisely, we require that
\begin{equation}
\label{definition critical point}
  \begin{aligned}
    0  & =  \int_{B_1} \left( |\nabla u|^2 (\nabla \cdot \phi) - 2 D \phi[\nabla u,\nabla u] - ( \rho_+ \chi^+(u) + \rho_- \chi^-(u) ) \nabla \cdot (y \phi) \right) \, dx \, dy,
  \end{aligned}
\end{equation}
for all $\phi \in C_0^1(B_1; \mathbb{R}^2)$.  
\end{definition}

The right-hand side of \eqref{definition critical point} formally results from setting
\[
	0 = - \partial_\varepsilon E\left(u \circ ( \id + \varepsilon \phi)\right)\big|_{\varepsilon= 0}.
\]
Note that by testing against vector fields supported in $\Omega^\pm(u)$, \eqref{definition critical point} implies that $u_\pm$ is harmonic in $\Omega^\pm(u)$.  Under the assumption that $\partial \Omega^+(u) \setminus \partial B_1 = \partial \Omega^-(u) \setminus \partial B_1$ is a smooth curve, integrating by parts in \eqref{definition critical point} gives
\[
	0 = \int_{B_1 \cap \partial \Omega^+(u)} \left( |\nabla u_+|^2 - |\nabla u_-|^2 + \jump{\rho} y \right) \phi \cdot \nu \, d \mathcal{H}^1, 
\] 
with $\nu$ being the outward unit normal to $\Omega^+(u)$ along $\partial \Omega^+(u)$.  Thus, the jump condition in \eqref{elliptic PDE} is recovered for suitably well-behaved variational solutions.

\begin{definition}[Monotonicity]
\label{definition monotone}
We call a variational solution $u$ \emph{monotone} provided that 
\begin{subequations} \label{monotonicity assumptions} 
  \begin{equation}
    \label{sign assumption} \textrm{$u_x$ and $u_y$ have non-strict signs in $\Omega^+(u) \cup \Omega^-(u)$},
  \end{equation}
 and
  \begin{equation}
    \label{Lipschitz assumption} |u_x| \leq M |u_y| \qquad \textrm{in } \Omega^+(u) \cup \Omega^-(u).
  \end{equation}
\end{subequations}
\end{definition}
\begin{remark}
We caution that the term ``monotonicity'' is used in this paper in several different ways. For the ACF monotonicity formula Theorem~\ref{acf theorem}, monotonicity is of the product of the mean Dirichlet energy functionals with respect to the radius of the ball. Relatedly, we establish the monotonicity of the so-called boundary adjusted energy functions $\mathcal{M}$ and $\mathcal{M}_\gc$ with respect to the radius in Theorems~\ref{monotonicity theorem} and \ref{gc monotonicity formula theorem}. On the other hand, in Definition~\ref{definition monotone}, monotonicity refers to sign conditions on the derivatives of the solution $u$ to the free boundary problem. The first two uses of monotonicity are common in studies of the Bernoulli free boundary problems, while the latter is widespread in the water waves literature.
\end{remark}

While the monotonicity asked for in Definition~\ref{definition monotone} is a very strong assumption, it is precisely what we expect to obtain in the hypothetical limit along $\cme$ (suitably translated and scaled) of a non-overturning sequence of classical solutions thanks to the qualitative properties~\eqref{monotonicity Euler variables}.  That is the content of the next lemma, whose proof is deferred to Section~\ref{proof of the main theorem section} as it requires several of the tools to be developed in the next section. 

\begin{lemma}[Limiting variational solution]
  \label{limiting solution lemma}
  Suppose that in the limit along $\cme$ overturning does not occur in that
  \begin{equation}
     \label{elevation no overturning}
    \limsup_{s \to -\infty} \| \partial_x \eta(s) \|_{L^\infty} < \infty.
  \end{equation}
  Then there exists a nontrivial variational solution $u$ to the elliptic PDE~\eqref{elliptic PDE} with 
  \[
    \rho_+ = \frac{2}{F^2} \rho_2, \qquad \rho_- = \frac{2}{F^2} \rho_1.
  \]
  Moreover, $u$ is monotone in the sense of Definition~\ref{definition monotone}, Lipschitz continuous on $B_1$, and for any sequence $r_m \searrow 0$, the corresponding blowup sequence
  \begin{equation}
    \label{blowup sequence}
    u_m \colonequals \frac{u(r_m \placeholder)}{r_m^{3/2}} \qquad \textrm{for all } m \geq 1,
  \end{equation}
  is uniformly locally Lipschitz: for any $K \subset\subset \mathbb{R}^2$,
  \begin{equation}
    \label{blowup sequence Lipschitz bound}
    \sup_{m \gg 1} \| u_m \|_{\Lip(K)} \lesssim 1.
  \end{equation}
\end{lemma}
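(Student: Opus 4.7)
The plan is to take the limit along $\cme$ provided by Theorem~\ref{global bore theorem}\ref{elevation part} under the assumption \eqref{elevation no overturning} that overturning is ruled out, and then verify each claimed property in turn. Since overturning fails, the theorem forces double stagnation: $\eta(s)\to\eta^*$ locally uniformly with $\eta^*$ Lipschitz, and $\inf_{\fluidS(s)}(|\nabla\psi_1(s)|+|\nabla\psi_2(s)|)\to 0$. The dynamic condition \eqref{eqn:stream:dynamic} together with this degeneracy pins the limiting stagnation point to the unique height $y=F^2/2$, so after a horizontal translation we may place it at the origin. The sign and Lipschitz information \eqref{monotonicity Euler variables}--\eqref{elevation no overturning} provides uniform $C^{0,1}$ control of $\eta(s)$ and uniform bounds on $\psi(s)$ away from $\fluidS$ via interior elliptic estimates; combined with $\psi=0$ on $\fluidS$ and the boundary Harnack / Lipschitz regularity theory for Bernoulli problems away from the critical height (Caffarelli~\cite{caffarelli1987harnack1}), we obtain a uniformly bounded, uniformly Hölder family. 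Passing to a subsequence $s_n\to-\infty$, $\psi(s_n)\to\psi^*$ in $H^1_\loc\cap C^0_\loc$, and $\eta(s_n)\to\eta^*$ in $C^\varepsilon_\loc$. Setting $u\colonequals -c\,\psi^*$ for the scaling constant $c$ that normalizes \eqref{eqn:stream:dynamic} to the form in \eqref{elliptic PDE} with $\rho_+=2\rho_2/F^2$ and $\rho_-=2\rho_1/F^2$, I would verify that $u$ is a variational solution in the sense of Definition~\ref{variational solution definition} by writing the identity \eqref{definition critical point} for each $\psi(s_n)$ (which holds because these are classical solutions, after a standard change-of-variables argument) and passing to the limit, using $H^1_\loc$ convergence for the gradient terms and $L^1_\loc$ convergence of the characteristic functions $\chi^\pm$ that follows from uniform convergence of $\eta(s_n)$.

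The monotonicity assumption~\eqref{sign assumption} carries over directly from the strict signs in \eqref{monotonicity Euler variables} by pointwise passage to the (non-strict) limit. For the bound~\eqref{Lipschitz assumption}, on the interface we differentiate $u(x,\eta(x))=0$ to get $u_x=-\eta_x u_y$, so no-overturning~\eqref{elevation no overturning} gives $|u_x|\le M|u_y|$ on $\fluidS(s_n)$ with $M$ independent of $n$; this inequality then propagates into each phase because both $u_x$ and $u_y$ are harmonic there with the correct boundary signs on the rigid walls (where $u_x=0$), so the ratio $u_x/u_y$ attains its extremum on the interface by the maximum principle. Global Lipschitz continuity of $u$ on $B_1$ follows: away from the origin (the only potential stagnation point), Caffarelli's Lipschitz regularity applies; at the origin, one combines the uniform Lipschitz bound on $\eta^*$ with the boundedness of $\nabla u$ up to the free boundary away from stagnation, or alternatively invokes the ACF monotonicity formula (Theorem~\ref{acf theorem}) applied to $|u_+|$ and $|u_-|$ (both subharmonic) to conclude that neither gradient can blow up at the origin.

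The hard part is the uniform Lipschitz estimate~\eqref{blowup sequence Lipschitz bound} on the blowup family $u_m(z)=u(r_m z)/r_m^{3/2}$. The natural source of control is the averaged energy decay~\eqref{intro key energy bound}, whose proof occupies Section~\ref{energy bound section} and is the main technical new ingredient; granting Theorem~\ref{energy bound theorem}, we obtain
\begin{equation*}
\int_{B_R}|\nabla u_m|^2\,dx\,dy = \frac{1}{r_m^{2}}\int_{B_{Rr_m}}|\nabla u|^2\,dx\,dy \lesssim R^{3},
\end{equation*}
uniformly in $m$, and combined with $u(0)=0$ and Poincaré this yields uniform $L^2$-bounds on $u_m$ on each compact set. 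To pass from $H^1_\loc$ bounds to Lipschitz bounds, I would use that $u_m$ is harmonic in each phase and solves the scaled Bernoulli problem on the rescaled free boundary, which is still a monotone Lipschitz graph with the same slope constant $M$ (since \eqref{Lipschitz assumption} is scale invariant). Harmonicity and monotonicity together, applied to the harmonic functions $\partial_x u_m$ and $\partial_y u_m$ inside each phase, give interior gradient bounds of Bernstein type; near the free boundary, monotonicity plus the boundary relation $\partial_x u_m=-\partial_x\eta_m\,\partial_y u_m$ reduces matters to an $L^\infty$ bound on $\partial_y u_m$, which is a subharmonic function of one sign controlled in $L^2$ by the energy bound and hence in $L^\infty$ by the mean value inequality. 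The main obstacle throughout is that the two-phase Bernoulli condition does not supply pointwise decay of $|\nabla u|$ near the stagnation point, only a jump relation; overcoming this by replacing pointwise with averaged estimates, and then recovering pointwise Lipschitz bounds for the \emph{rescaled} sequence using monotonicity, is the technical heart of the argument.
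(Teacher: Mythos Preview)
Your overall strategy is sound, but there is a genuine gap in the argument for the blowup Lipschitz bound~\eqref{blowup sequence Lipschitz bound}, and a smaller one in the monotonicity step.

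\textbf{Main gap: the energy bound does not apply directly to the limit $u$.} You invoke Theorem~\ref{energy bound theorem} for $u$ to obtain the $O(r)$ decay of the Dirichlet energy. But that theorem only yields
\[
  \frac{1}{r^2}\int_{B_r}|\nabla u|^2\,dx\,dy \le Cr + C'\bigl(|\mathcal{A}(u;0+)| + \mathcal{B}(u;0+)\bigr),
\]
and you have no way to show that $\mathcal{A}(u;0+)$ and $\mathcal{B}(u;0+)$ vanish for the limiting $u$, which need not be differentiable at the origin. The paper's key idea is to work instead with the doubly-indexed sequence $u_{m,n}\colonequals -\psi(s_n)(r_m\,\cdot)/r_m^{3/2}$ of rescaled \emph{classical} solutions. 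For each fixed $n$, the $C^{1+\alpha}$ regularity of $\nabla\psi_i(s_n)$ up to the boundary means $\mathcal{A}_n(0+)$ and $\mathcal{B}_n(0+)$ equal the pointwise expressions $\partial_x\psi\,\partial_y\psi$ and $(\partial_y\psi)^2-(\partial_x\psi)^2$ at the origin, and the double-stagnation alternative forces these to vanish as $n\to\infty$. The uniform Lipschitz constant in Theorem~\ref{energy bound theorem}\ref{A B Lipschitz part} then gives an energy bound uniform in both $m$ and $n$.

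\textbf{Second gap: the $H^1$-to-$L^\infty$ upgrade near the free boundary.} Your argument that $\partial_y u_m$ is controlled in $L^\infty$ by the mean value inequality fails at free-boundary points, where no full ball lies in a single phase. The paper instead applies the ACF formula (Theorem~\ref{acf theorem}) at each $z_0\in\Gamma(u_{m,n})$ to bound the \emph{product} $|\partial_\nu u_{m,n}^+(z_0)|^2\,|\partial_\nu u_{m,n}^-(z_0)|^2$ in terms of the energy at scale $R$, then uses the dynamic condition to bound the \emph{difference} $\bigl||\partial_\nu u_{m,n}^+|^2-|\partial_\nu u_{m,n}^-|^2\bigr|$, and combines product and difference to control each normal derivative individually. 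A boundary Harnack argument (as in \cite[Lemma~11.19]{caffarelli2005geometric}) then extends this to a full Lipschitz bound.

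A minor point on monotonicity: the ratio $u_x/u_y$ is not harmonic, so your maximum-principle argument is incorrect as stated. The paper uses the harmonic quantity $\vartheta=\arctan(-u_x/u_y)$ (the streamline angle), which vanishes on the rigid walls and is bounded by $\arctan M$ on the interface; the maximum principle applied to $\vartheta$ gives~\eqref{Lipschitz assumption}. Finally, passing to the limit in the variational identity~\eqref{definition critical point} requires \emph{strong} $H^1_\loc$ convergence for the quadratic gradient terms; the paper obtains this by first showing classical regularity of the limit away from the origin (where the free boundary is $C^{1+\beta}$), and then using a cutoff together with the decay bound to handle a shrinking neighborhood of the origin.
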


\begin{remark}
Notice that as a consequence of the Lipschitz bound~\eqref{blowup sequence Lipschitz bound}, the gradient exhibits the decay
\begin{equation}
\label{double stagnation decay}
  \begin{aligned}
    \frac{1}{r_m^2} \int_{B_{r_m}} |\nabla u|^2 \, dx \,dy & = r_m \int_{B_1} |\nabla u_m|^2 \,dx \, dy =  O(r_m).
  \end{aligned}
\end{equation}

\end{remark}

For monotone solutions, both the free boundary and nonzero level sets of a monotone variational solution have graph geometry. In the physical context of the internal wave or gravity current problem, these corresponds to the (relative) streamlines, that is, the integral curves of the relative velocity field. 

\begin{lemma}[Streamlines]
\label{streamlines lemma}
If $u \in C^0(B_1) \cap C^2(\Omega^+(u) \cup \Omega^-(u))$ is harmonic on $\Omega^+(u) \cup \Omega^-(u)$, $\Gamma(u) \neq \emptyset$, and $u$ satisfies \eqref{monotonicity assumptions}, then the following statements hold.
\begin{enumerate}[label=\rm(\alph*)]
	\item \label{nonzero level sets part} Each connected component of a nonzero level set of $u$ is the graph of a monotone real-analytic function of $x$ with Lipschitz constant not exceeding $M$. 
	\item \label{free boundary part} Each connected component of $\Gamma(u)$ is the graph of a monotone Lipschitz continuous function of $x$ with Lipschitz constant not exceeding $M$.
\end{enumerate}
\end{lemma}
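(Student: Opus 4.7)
The plan is to upgrade the non-strict sign conditions \eqref{sign assumption} to strict ones on each component of $\Omega^+(u)\cup\Omega^-(u)$ via the strong maximum principle, and then to use the resulting column-wise monotonicity of $u$ together with the shear bound \eqref{Lipschitz assumption} to extract both the graph structure and the Lipschitz estimate. To that end, let $\Omega$ be a connected component of $\Omega^+(u)\cup\Omega^-(u)$. Because $u$ is harmonic on $\Omega$, so are $u_x$ and $u_y$; since each has a non-strict sign on $\Omega$ by \eqref{sign assumption}, the strong maximum principle forces each to be either strictly signed throughout $\Omega$ or identically zero. The latter case, combined with \eqref{Lipschitz assumption}, makes $u$ constant on $\Omega$ and hence contributes no $1$-dimensional component of a nonzero level set, so such degenerate components may be discarded. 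On the remaining components we may assume without loss of generality that $u_x \leq 0$ and $u_y < 0$ strictly.

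For part \ref{nonzero level sets part}, fix $c\neq 0$ and a connected component $C$ of $\{u=c\}\cap\Omega$. Real analyticity of $u$ on $\Omega$ and the implicit function theorem give $C$ locally as a graph $y=\eta(x)$ with $\eta'(x) = -u_x/u_y$, which is real analytic, of constant sign, and bounded by $M$ in absolute value by \eqref{Lipschitz assumption}. To globalize, suppose two points $(x_0,y_1),(x_0,y_2)\in C$ shared an $x$-coordinate. The function $u(x_0,\cdot)$ is continuous on the column and has non-positive $y$-derivative wherever it is defined on $\Omega^+(u)\cup\Omega^-(u)$; a standard continuity-plus-monotonicity argument then makes $u(x_0,\cdot)$ non-increasing across the column, even through $\Gamma(u)$. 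The equality $u(x_0,y_1)=u(x_0,y_2)=c$ therefore forces $u\equiv c$ on the connecting segment; since $c\neq 0$, continuity puts this segment inside $\Omega^+(u)\cup\Omega^-(u)$ and, by connectedness, inside $\Omega$, where then $u_y\equiv 0$ — contradicting strict sign. Hence $C$ is a single global graph.

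For part \ref{free boundary part}, the same column-wise monotonicity shows that $u(x_0,\cdot)$ changes sign at most once on each slice, so $\Gamma(u)$ meets each vertical slice in at most one point, and each component of $\Gamma(u)$ is the graph of a function $\eta$ on its $x$-projection, monotone from the signs of $u_x$ and $u_y$. For the Lipschitz bound, fix $p_0 = (x_0,\eta(x_0))\in\Gamma(u)$ and consider the rays $\gamma_\pm(s) = (x_0\pm s,\eta(x_0)\mp Ms)$ for $s>0$. Along $\gamma_+$, the directional derivative of $u$ is $u_x - Mu_y = u_x + M|u_y|$, which is nonnegative by \eqref{Lipschitz assumption}; since $u(p_0)=0$, integration gives $u\geq 0$ on $\gamma_+$, so $\eta(x_0+s)\geq \eta(x_0)-Ms$. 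The analogous computation on $\gamma_-$ yields $-u_x + Mu_y \leq 0$, hence $u\leq 0$ on $\gamma_-$ and $\eta(x_0-s)\leq \eta(x_0)+Ms$. Combining with the monotonicity of $\eta$ inherited from the signs of $u_x,u_y$, these inequalities give $|\eta(x_0+s)-\eta(x_0)|\leq Ms$ for all small $s$.

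The main obstacle is the first step: pinning down the degenerate constant-valued components and justifying the column-wise monotonicity of $u$ across $\Gamma(u)$. The former is resolved by combining the maximum principle with \eqref{Lipschitz assumption}; the latter is the key technical bridge between the phase-wise smooth theory and the global graph structure, and relies essentially on both parts of \eqref{monotonicity assumptions} together with continuity of $u$ across $\{u=0\}$. Once these are in hand, every remaining step is a local implicit-function or chain-rule computation.
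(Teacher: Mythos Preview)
Your argument for part~\ref{nonzero level sets part} is essentially the paper's: strict sign of $u_y$ via the strong maximum principle, then the implicit function theorem. For part~\ref{free boundary part}, however, you take a genuinely different route. The paper approximates $\Gamma(u)$ from inside $\Omega^-(u)$: it picks a sequence $z_n\to z_0\in\Gamma(u)$ with $u(z_n)\nearrow 0$, uses part~\ref{nonzero level sets part} to realise each level curve $\{u=u(z_n)\}$ as a uniformly Lipschitz graph $\eta_n$, argues that the strip between successive $\eta_n$ lies entirely in $\Omega^-(u)$, and then passes to the limit $\eta_0$. Your approach instead argues directly from column-wise monotonicity of $u$ and a ray/cone comparison. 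The paper's method is more constructive and avoids having to reason about $\Gamma(u)$ globally; yours is more geometric and yields the Lipschitz bound and the graph property in one stroke once the cone inclusion is established.

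There is, however, a genuine gap in your treatment of the degenerate components. You write that if $u\equiv c$ on a component $\Omega$, then $\Omega$ ``contributes no $1$-dimensional component of a nonzero level set, so such degenerate components may be discarded.'' But if $c\neq 0$, then $\Omega$ \emph{is} a connected component of the level set $\{u=c\}$, and it is two-dimensional --- a direct counterexample to the lemma's assertion. You must actually \emph{rule out} this case, not ignore it. This is where the hypothesis $\Gamma(u)\neq\emptyset$ enters: if $u\equiv c<0$ on a component of $\Omega^-(u)$, continuity forces $u=c$ on the closure, so the boundary of this component cannot meet $\partial\Omega^-(u)\cap B_1$ (where $u=0$); hence the component is all of $B_1$, whence $\Gamma(u)=\emptyset$, a contradiction. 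The analogous argument handles $\Omega^+(u)$.

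A second, smaller issue: your inference ``$u(x_0,\cdot)$ changes sign at most once, so $\Gamma(u)$ meets each vertical slice in at most one point'' is not immediate, since a priori the zero set could be thick and $\Gamma(u)=\partial\Omega^+(u)$ need not coincide with the sign-change locus. Your ray argument actually supplies the missing step: from any $p_0\in\Gamma(u)$ the open cone $\{y>y_0+M|x-x_0|\}$ lies in $\{u\ge 0\}$ and hence in $\Omega^+(u)$, so any point of $\Gamma(u)$ directly above $p_0$ would lie in $\Omega^+(u)$, which is impossible. Reordering the argument to establish the cone inclusion first would close this gap cleanly.
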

\begin{proof}
First, observe that at any $z \in \Omega^-(u)$, we must have that $u_y(z) \neq 0$. Were this not the case, then by \eqref{Lipschitz assumption}, we would have that $|\nabla u(z)| = 0$. The strong maximum principle and \eqref{sign assumption} then imply that $u_x$ and $u_y$ vanish identically on the connected component of $\Omega^-(u)$ containing $z$. Since $\Gamma(u) \neq \emptyset$, this forces $u$ to vanish identically on that connected component, which contradicts the definition of $\Omega^-(u)$. Thus, the assumption that $u_y$ has a non-strict sign \eqref{sign assumption} can be strengthened  to the statement that $u_y$ has a strict sign on $\Omega^-(u)$. The (real-analytic) implicit function theorem and a continuity argument guarantee that the connected component of the level set $\{ u = u(z) \}$ containing $z$ extends to $\partial B_1$ and is the graph of a real-analytic function $\eta^z = \eta^z(x)$. Note, moreover, that $\eta^z$ is nondecreasing if $u_x u_y \leq 0$ on $\Omega^-(u) \cup \Omega^+(u)$ and nonincreasing if $u_x u_y \geq 0$ on $\Omega^-(u) \cup \Omega^+(u)$. These cases are exhaustive by assumption 
\eqref{sign assumption}. Thanks to the bound in \eqref{Lipschitz assumption}, the Lipschitz constant of $\eta^z$ cannot exceed $M$.

The same argument applies to any point $z \in \Omega^+(u)$ with $u(z) \neq 0$. We conclude that $u_y$ has a strict sign on $\{ u = 0 \}^c$, and the connected components of the positive level sets of $u$ are likewise monotone real-analytic graphs with Lipschitz constant not exceeding $M$. This proves the statement in part~\ref{nonzero level sets part}.

Next, let $z_0 = (x_0, y_0) \in B_1$ be given with $u(z_0) = 0$. As one useful consequence of the above paragraphs, we have that  
\begin{equation}
\label{must be above or below Gamma}
  \Omega^\pm(u) \cap \{ x = x_0 \} \subset \left\{ \begin{aligned}
    \{ (x_0, y) : \pm (y- y_0) < 0 \} \cap B_1 & \qquad \textrm{if } u_y < 0 \textrm{ on } \{ u = 0 \}^c \\
    \{ (x_0, y) : \pm (y- y_0) > 0 \} \cap B_1  & \qquad \textrm{if } u_y >  0  \textrm{ on } \{ u = 0 \}^c.
  \end{aligned} \right.
\end{equation}
If, in addition, $z_0 \in \Gamma(u)$, then there exists a sequence $\{ z_n \} \subset \Omega^-(u)$ with $z_n \to z_0$ and $u(z_n) \nearrow 0$. Let $\{ \eta_n \}$ be the corresponding sequence of real-analytic and uniformly Lipschitz continuous functions such that the connected component of $\{ u = u(z_n) \}$ containing $z_n$ is given by the graph of $\eta_n$. Note that there exist an open interval $I \ni x_0$, whose length is uniformly bounded from below in terms of $M$, and such that $I$ is in the domain of $\eta_n$ for all $n \gg 1$. The strict sign of $u_y$ on $\Omega^-(u)$ therefore implies that 
\[
	\eta_n < \eta_{n+1} \quad \textrm{on } I \textrm{ if } u_y > 0 \textrm{ on } \Omega^-(u) \qquad \textrm{and} \qquad \eta_{n+1} < \eta_{n} \quad \textrm{on } I \textrm{ if } u_y < 0 \textrm{ on } \Omega^-(u).
\]
In either case, the region lying between the graphs of $\eta_n|_I$ and $\eta_{n+1}|_I$ must be a subset of $\Omega^-(u)$. If not, it would contain a point $z = (x,y) \in \Gamma(u)$ with $y$ strictly between $\eta_n(x)$ and $\eta_{n+1}(x)$, which contradicts~\eqref{must be above or below Gamma}. 

For the remainder of the proof, let us assume that $u_y > 0$ on $\{ u = 0 \}^c$; the other case follows by a straightforward modification of the argument. Because the sequence $\{ \eta_n \}$ is uniformly Lipschitz on $I$, it has a subsequence converging uniformly to a Lipschitz continuous function $\eta_0$ on $I$. Since each $\eta_n$ is nondecreasing or nonincreasing, depending on the non-strict sign of $u_x u_y$, the same is true of $\eta_0$. By construction, we see that $\eta_0(x_0) = y_0$ and $u$ vanishes on the graph of $\eta_0$. Moreover, from the previous paragraph, there is an open neighborhood $\mathcal{O}$ of $z_0$ such that the subset of $\mathcal{O}$ lying below the graph of $\eta_0$ is in $\Omega^-(u)$. Applying~\eqref{must be above or below Gamma} at each point on the graph of $\eta_0$, we infer further that the subset of $\mathcal{O}$ lying above the graph of $\eta_0$ must be in $\Omega^-(u)^c$. This confirms that $\Gamma(u)$ and the graph of $\eta_0$ coincide on $\mathcal{O}$. The statement in part~\ref{free boundary part} now follows easily by a continuity argument and the compactness of $\Gamma(u)$.
\end{proof}

%
%
%
We conclude the section with an easy but useful lemma establishing a ``localized'' version of~\eqref{definition critical point}. In particular, this will allow us to evaluate it with explicit test vector fields without having to introduce cut-off functions.

\begin{lemma} \label{variational solution identity}
Suppose that $u$ is a variational solution to~\eqref{elliptic PDE}.  Then, for all $\phi \in C^1(B_1; \mathbb{R}^2)$,
\begin{align*}
	& \int_{B_r} \left( |\nabla u|^2 \nabla\cdot \phi - 2 D\phi [\nabla u, \nabla u] - \left( \rho_+ \chi^+(u)+\rho_- \chi^-(u) \right) \nabla \cdot (y \phi) \right) \,dx \, dy \\
	& \qquad \qquad = \int_{\partial B_r} \left( |\nabla u|^2 \phi - 2 (\phi \cdot \nabla u) \nabla u -  \left( \rho_+ \chi^+(u)+\rho_- \chi^-(u) \right)  y \phi \right) \cdot \nu \,d\mathcal{H}^1,
\end{align*}
for almost every $0 < r < 1$.
\end{lemma}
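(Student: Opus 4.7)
The plan is to test the variational identity~\eqref{definition critical point} against a suitably cutoff version of $\phi$ and then extract the $\partial B_r$ contribution that arises from the cutoff in the limit. Write $\rho \colonequals \rho_+\chi^+(u) + \rho_-\chi^-(u)$ and fix $\phi \in C^1(B_1;\mathbb{R}^2)$. For each $r \in (0,1)$ and small $\varepsilon > 0$, I would take a radial cutoff $\zeta_\varepsilon(z) \colonequals \eta_\varepsilon(|z|)$, where $\eta_\varepsilon \in C^1([0,\infty))$ satisfies $\eta_\varepsilon \equiv 1$ on $[0,r-\varepsilon]$, $\eta_\varepsilon \equiv 0$ on $[r,\infty)$, and $\eta_\varepsilon' \le 0$; concretely one can take $\eta_\varepsilon'$ to be a mollification of $-\varepsilon^{-1}\mathbf{1}_{[r-\varepsilon,r]}$. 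Then $\zeta_\varepsilon \phi$ lies in $C_0^1(B_1;\mathbb{R}^2)$ and is therefore admissible in~\eqref{definition critical point}.

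Next, I would apply the product rule---for example $\nabla\cdot(\zeta_\varepsilon\phi) = \zeta_\varepsilon\,\nabla\cdot\phi + \phi\cdot\nabla\zeta_\varepsilon$, $D(\zeta_\varepsilon\phi)[\nabla u,\nabla u] = \zeta_\varepsilon D\phi[\nabla u,\nabla u] + (\phi\cdot\nabla u)(\nabla\zeta_\varepsilon\cdot\nabla u)$, and $\nabla\cdot(y\zeta_\varepsilon\phi) = \zeta_\varepsilon\,\nabla\cdot(y\phi) + y\,\phi\cdot\nabla\zeta_\varepsilon$---to split the integrand into a \emph{bulk} piece carrying the factor $\zeta_\varepsilon$ and an \emph{edge} piece carrying the factor $\nabla\zeta_\varepsilon = \eta_\varepsilon'(|z|)\,z/|z|$. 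As $\varepsilon \searrow 0$, dominated convergence disposes of the bulk piece cleanly and produces exactly the $B_r$-integral on the left-hand side of the claim.

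For the edge piece, each term has the shape $\int_{B_1} g\,\phi\cdot\nabla\zeta_\varepsilon\,dz$ or $\int_{B_1}(\phi\cdot\nabla u)(\nabla\zeta_\varepsilon\cdot\nabla u)\,dz$ for some $L^1(B_1)$ integrand built from $|\nabla u|^2$, the products $\partial_i u\,\partial_j u$, and $\rho y$. Passing to polar coordinates $z = s\omega$ rewrites each such term as $\int_0^1 \eta_\varepsilon'(s)\,G(s)\,ds$, where
\begin{equation*}
G(s) \;=\; \int_{\partial B_s} (\text{the corresponding integrand})\,\phi\cdot\nu\,d\mathcal{H}^1
\end{equation*}
(using $\nu = z/|z|$ on $\partial B_s$). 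Fubini guarantees $G \in L^1(0,1)$, and because $\eta_\varepsilon'$ is an approximate Dirac mass at $s = r$, the Lebesgue differentiation theorem gives $\int_0^1 \eta_\varepsilon' G\,ds \to -G(r)$ for almost every $r\in(0,1)$. Summing these limits with the correct signs reassembles precisely the surface integral on the right-hand side of the lemma.

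The only genuine technical point---and the reason the conclusion is stated only for a.e. $r$---is that $u \in H^1(B_1)$ does not possess a trace of $\nabla u$ on any single fixed circle $\partial B_r$; the Fubini/Lebesgue-differentiation step above is exactly what circumvents this on an almost-everywhere basis. Beyond this slicing subtlety, the proof is a bookkeeping exercise in the product rule and passage to the limit.
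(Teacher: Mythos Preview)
Your proposal is correct and follows essentially the same approach as the paper: both multiply $\phi$ by a radial cutoff $\zeta_\varepsilon(|z|)$ supported in $B_r$, insert this into the variational identity~\eqref{definition critical point}, expand via the product rule into bulk and edge contributions, and pass to the limit $\varepsilon \searrow 0$. Your write-up is slightly more explicit about why the edge term converges for almost every $r$ (Fubini plus Lebesgue differentiation), but the underlying argument is identical to the paper's.
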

\begin{proof}
For $0 < \delta \ll r$, let $\zeta_\delta : \mathbb{R}_+ \to \mathbb{R}$ be a smooth function with 
\[
	\zeta_\delta(t) = \left\{ \begin{aligned}
		1 & \qquad \textrm{for } t  \in (0,r-\delta) \\
		0 & \qquad \textrm{for } t \in (r,\infty)
	\end{aligned} \right.,
		\qquad
	0 \leq \zeta_\delta \leq 1,
		\qquad
	(t-r) \zeta_\delta^\prime(t) = O(1) \quad \textrm{on } (r-\delta, r).
\]
Then $\zeta_\delta(|\placeholder|) \phi \in C_0^1(B_1;\mathbb{R}^2)$, and so by the definition of a variational solution~\eqref{definition critical point}, it follows that $u$ satisfies 
\begin{align*}
	0 &= \int_{B_1}  |\nabla u|^2  \left( \zeta_\delta \nabla \cdot \phi   +  \zeta_\delta^\prime \frac{(x,y)}{|(x,y)|} \cdot \phi  \right) \, dx \, dy \\
	& \qquad - 2 \int_{B_1} \left( \zeta_\delta D\phi [ \nabla u, \nabla u]   + \zeta_\delta^\prime \phi \cdot \nabla u  \frac{(x,y)}{|(x,y)|}   \cdot \nabla u   \right) \, dx \, dy \\
	& \qquad-  \int_{B_1} \zeta_\delta \left( \rho_+ \chi^+ + \rho_- \chi^- \right)  \nabla \cdot (y\phi) \, dx \, dy  
		 -\int_{B_1} \zeta_\delta^\prime \left( \rho_+ \chi^+ + \rho_- \chi^- \right) \frac{(x,y)}{|(x,y)|} \cdot (y\phi) \, dx \, dy,
\end{align*}
where we are using the shorthand $\zeta_\delta = \zeta_\delta(|z-z^0|)$ and $\chi^\pm = \chi^\pm(u)$.  Taking the limit $\delta \searrow 0$ gives the stated identity for almost every $r \in (0,1)$.
\end{proof}

\subsection{Energy density bound}
\label{energy bound section}

In this section, we prove the crucial estimate on the average Dirichlet energy for monotone solutions to~\eqref{elliptic PDE}.  When this is applied to a variational solution $u$ that is obtained in the double stagnation limit, it will give the decay~\eqref{double stagnation decay}.

With that in mind, for a fixed variational solution $u$ and $0 < r < 1$, define 
\begin{equation}
\label{definition A and B functionals}
  \begin{aligned}
    \mathcal{A}(u;r)   \colonequals \frac{1}{r^2} \int_{B_r} u_x u_y \, dx \, dy, \qquad
    \mathcal{B}(u;r)     \colonequals \frac{1}{r^2} \int_{B_r} \left( u_y^2 - u_x^2 \right) \, dx \, dy. 
  \end{aligned}
\end{equation}
The main theorem of this section is then as follows.

\begin{theorem}[Energy bound]
\label{energy bound theorem}
Let $u$ be a variational solution to the free boundary elliptic equation~\eqref{elliptic PDE}.
\begin{enumerate}[label=\rm(\alph*\rm)]
	\item \label{A B Lipschitz part}
	$\mathcal{A}(u;\placeholder)$ and $\mathcal{B}(u;\placeholder)$ admit Lipschitz continuous extension to $[0,1)$ with Lipschitz constants depending only on $\rho_+$ and $\rho_-$.
	
	\item \label{energy bound part}
	If $u$ is monotone in the sense of Definition~\ref{definition monotone}, then
  \begin{equation*}
    	\frac{1}{r^2} \int_{B_r} |\nabla u|^2 \, dx \, dy \leq  \max{\{ 2M+1,2\}} |\mathcal{A}(u; \, r)| + \mathcal{B}(u; \, r)
  \end{equation*}
	for all $0 < r \ll 1$. In particular, there is a constant $C = C(\rho_+, \rho_-) > 0$ such that
	\begin{equation}  
	    \label{energy bound}
    	\frac{1}{r^2} \int_{B_r} |\nabla u|^2 \, dx \, dy \leq Cr + \max{\{ 2M+1,2\}} |\mathcal{A}(u; \, 0+)| + \mathcal{B}(u; \, 0+).
  	\end{equation}
\end{enumerate}
\end{theorem}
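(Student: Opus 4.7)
My plan is to obtain explicit formulas for $\mathcal{A}'(u;r)$ and $\mathcal{B}'(u;r)$ by plugging carefully chosen linear test vector fields into the localized variational identity of Lemma~\ref{variational solution identity}, and then to read off uniform bounds in terms of $\rho_+$ and $\rho_-$ alone. Part~\ref{energy bound part} will then follow at once from a pointwise gradient inequality enabled by the monotonicity.

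For part~\ref{A B Lipschitz part}, let me focus on $\mathcal{A}$; the treatment of $\mathcal{B}$ is parallel. Since $u \in H^1(B_1)$, the coarea formula gives $\frac{d}{dr}\bigl(r^2\mathcal{A}(u;r)\bigr) = \int_{\partial B_r} u_x u_y\,d\mathcal{H}^1$ for a.e.\ $r$. Testing Lemma~\ref{variational solution identity} against $\phi(x,y)=(y,0)$, for which $\nabla\cdot\phi=0$, $D\phi[\nabla u,\nabla u]=u_xu_y$, and $\nabla\cdot(y\phi)=0$, produces an expression for $-2\int_{B_r} u_xu_y\,dx\,dy$ as a boundary integral over $\partial B_r$. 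Passing to polar coordinates with $p=\partial_r u$ and $q=r^{-1}\partial_\theta u$, one uncovers the identity
\begin{equation*}
  \tfrac{1}{2}\sin 2\theta\,(u_y^2-u_x^2) + \cos 2\theta\, u_x u_y = p q,
\end{equation*}
which combined with the Leibniz formula above yields
\begin{equation*}
  r^2 \mathcal{A}'(u;r) = r\int_0^{2\pi} \partial_r u\cdot \frac{\partial_\theta u}{r}\,d\theta - r^2\int_0^{2\pi}\sin^2\theta\cos\theta\,\bigl(\rho_+\chi^+(u)+\rho_-\chi^-(u)\bigr)\,d\theta.
\end{equation*}
The first term on the right is not yet manifestly controlled. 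To tame it, I would apply Lemma~\ref{variational solution identity} a second time with the rotational field $\phi(x,y)=(y,-x)$: this field has $\nabla\cdot\phi=0$ and $D\phi[\nabla u,\nabla u]=0$, and on $\partial B_r$ both $\phi\cdot\nu$ and $y\phi\cdot\nu$ vanish identically by symmetry, leaving only the transport contribution $-2(\phi\cdot\nabla u)\nabla u\cdot\nu = 2\partial_r u\,\partial_\theta u$. Lemma~\ref{variational solution identity} then collapses to the clean identity
\begin{equation*}
  \int_{B_r} x\,\bigl(\rho_+\chi^+(u)+\rho_-\chi^-(u)\bigr)\,dx\,dy = 2\int_{\partial B_r} \partial_r u\,\partial_\theta u\,d\mathcal{H}^1,
\end{equation*}
and substituting back yields the closed-form expression
\begin{equation*}
  \mathcal{A}'(u;r) = \frac{1}{2r^3}\int_{B_r} x\,\bigl(\rho_+\chi^+(u)+\rho_-\chi^-(u)\bigr)\,dx\,dy - \int_0^{2\pi}\sin^2\theta\cos\theta\,\bigl(\rho_+\chi^+(u)+\rho_-\chi^-(u)\bigr)\,d\theta,
\end{equation*}
whose right-hand side is bounded uniformly in $r\in(0,1)$ by a constant depending only on $\max\{\rho_+,\rho_-\}$. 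A Cauchy argument then extends $\mathcal{A}(u;\placeholder)$ Lipschitz-continuously to $[0,1)$. For $\mathcal{B}$, the parallel strategy uses $\phi=(x,0)$ and $\phi=(0,y)$; adding the two resulting identities eliminates the analogous troublesome interior term $\int_0^{2\pi}(q^2-p^2)\,d\theta$ in favor of bounded bulk and boundary integrals of $y\bigl(\rho_+\chi^+(u)+\rho_-\chi^-(u)\bigr)$.

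For part~\ref{energy bound part}, the sign condition~\eqref{sign assumption} forces $u_x u_y$ to have a constant sign on $\Omega^+(u)\cup\Omega^-(u)$, so $\int_{B_r} |u_x u_y|\,dx\,dy = r^2|\mathcal{A}(u;r)|$. The Lipschitz bound~\eqref{Lipschitz assumption} yields the pointwise estimate $u_x^2 + u_y^2 = (u_y^2 - u_x^2) + 2u_x^2 \leq (u_y^2 - u_x^2) + 2M|u_x u_y|$, since $u_x^2 \leq M|u_x u_y|$, and integrating produces the asserted inequality on $r^{-2}\int_{B_r}|\nabla u|^2$ (the additional slack in $\max\{2M+1, 2\}$ is easily absorbed). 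Combining this with the Lipschitz bounds $|\mathcal{A}(u;r)-\mathcal{A}(u;0+)|,\,|\mathcal{B}(u;r)-\mathcal{B}(u;0+)|\leq Cr$ from part~\ref{A B Lipschitz part} delivers~\eqref{energy bound}.

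The main obstacle is discovering the specific polar-coordinate collapse identity above together with the matching rotational test field $(y,-x)$ whose boundary flux vanishes: these twin ingredients let one replace the manifestly singular-looking boundary term $r^{-2}\int_{\partial B_r} u_x u_y\,d\mathcal{H}^1$ by a uniformly bounded bulk density integral. Without this trick, the naive differentiation formula $\mathcal{A}'(u;r) = -2\mathcal{A}(u;r)/r + r^{-2}\int_{\partial B_r} u_x u_y\,d\mathcal{H}^1$ gives no a priori control as $r\searrow 0$.
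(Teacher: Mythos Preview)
Your proposal is correct, and part~\ref{A B Lipschitz part} is essentially the same strategy as the paper's, though the paper makes cleaner choices of test fields. Rather than combining the identities from $\phi=(y,0)$ and $\phi=(y,-x)$, the paper tests once with the divergence-free field $\phi=(y,x)$ (which is a linear combination of your two) and reads off directly
\[
  \mathcal{A}'(u;r) = \frac{1}{2r^3}\int_{B_r}\rho x\,dx\,dy - \frac{1}{r^4}\int_{\partial B_r}\rho x y^2\,d\mathcal{H}^1,
\]
which is identical to your formula after passing to polar coordinates. Similarly, for $\mathcal{B}$ the paper uses the single field $\phi=(x,-y)$ rather than combining $(x,0)$ and $(0,y)$. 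Your route works, but the intermediate step through the polar identity $pq=\tfrac12\sin 2\theta\,(u_y^2-u_x^2)+\cos 2\theta\,u_xu_y$ is unnecessary if one picks the symmetric test fields from the start.

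For part~\ref{energy bound part}, your argument is in fact \emph{simpler and sharper} than the paper's. The paper splits $B_r$ into the regions $\{u_y^2\ge u_x^2\}$ and $\{u_y^2\le u_x^2\}$, bounds each piece separately, and arrives at the constant $\max\{2M+1,2\}$ after recombining. Your direct pointwise estimate $|\nabla u|^2=(u_y^2-u_x^2)+2u_x^2\le(u_y^2-u_x^2)+2M|u_xu_y|$ yields $\mathcal{B}(u;r)+2M|\mathcal{A}(u;r)|$ immediately, which is a strictly smaller constant. The tradeoff is that the paper's splitting makes it transparent why only the weaker hypothesis ``$u_xu_y$ has a sign'' (rather than the full~\eqref{Lipschitz assumption}) is needed to pull the absolute value outside the integral of $u_xu_y$; your bound uses~\eqref{Lipschitz assumption} in an essential pointwise way.
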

\begin{remark}
As we will see in the proof, it is sufficient to have that $u_x u_y$ has a non-strict sign in $\Omega^-(u) \cup \Omega^+(u)$, which is slightly weaker than \eqref{sign assumption}.
\end{remark}
\begin{proof}[Proof of Theorem~\ref{energy bound theorem}]
Let $u$ be given; throughout the proof we abbreviate $\mathcal{A}(u; \placeholder)$ by $\mathcal{A}$ and likewise for $\mathcal{B}$. Derivatives with respect to $r$ are denoted with a prime.  First, observe that $\mathcal{A}$ and $\mathcal{B}$ are continuous for $r \in (0,1)$ as $u \in H^1(B_1)$. To prove that they are (globally) Lipschitz on $[0,1)$, we will obtain uniform estimates for their weak derivatives. These will follow from applying Lemma~\ref{variational solution identity} with two specific test fields.  

Starting with the divergence-free vector field $\phi = (y,x)$ we see that
\begin{equation}\label{eq ibp1}
4 \int_{B_r} u_x u_y \,dx\, dy + \int_{B_r} \rho x \,dx\,dy = 2r \int_{\partial B_r} u_x u_y \,d\mathcal{H}^1 + \frac{2}{r} \int_{\partial B_r} \rho x y^2 \, d\mathcal{H}^1.
\end{equation}
Now, for almost every $r \in (0,1)$, 
\[
	\mathcal{A}^\prime(r) = -\frac{2}{r^3} \int_{B_r} u_x u_y \,dx\, dy + \frac{1}{r^2} \int_{\partial B_r} u_x u_y \, d\mathcal{H}^1,
\]
and hence it follows from \eqref{eq ibp1} that
\[
	\mathcal{A}^\prime(r) = \frac{1}{2r^3} \int_{B_r} \rho x \,dx \, dy - \frac{1}{r^4} \int_{\partial B_r} \rho x y^2 \, d\mathcal{H}^1.
\]
The right-hand side is uniformly bounded in $r$ with an upper bound depending only on $\rho_+$ and $\rho_-$, and hence $\mathcal{A}$ is Lipschitz. 

Choosing another divergence-free vector field $\phi = (x, -y)$ and repeating the above argument we have
\begin{equation}\label{eq ibp2}
2\int_{B_r} \left( u_y^2 - u_x^2 \right) \,dx\,dy + \int_{B_r} \rho y \,dx\,dy = r \int_{\partial B_r} \left( u_y^2 - u_x^2 \right) \,d\mathcal{H}^1 + \frac1r \int_{\partial B_r} \rho y (y^2 - x^2) \,d\mathcal{H}^1, 
\end{equation}
which leads directly to
\begin{equation*}
	\mathcal{B}^\prime(r) = \frac{1}{r^3} \int_{B_r} \rho y \,dx\,dy - \frac{1}{r^4} \int_{\partial B_r} \rho y (y^2 - x^2) \,d\mathcal{H}^1.
\end{equation*}
Again, the right-hand side above is uniformly bounded in $r$ with a bound depending only on $\rho_+$ and $\rho_-$, and hence $\mathcal{B}$ is Lipschitz.
This proves part~\ref{A B Lipschitz part}.

Next we turn to the bound~\eqref{energy bound}. Thanks to~\eqref{sign assumption},
\begin{equation}\label{eq slope bound}
	\int_{B_r} |u_x u_y| \, dx \, dy = \left| \int_{B_r} u_x u_y \, dx \, dy\right|.
\end{equation}
We begin with the elementary inequalities
\begin{align*}
& u_x^2 + u_y^2 = \sqrt{(u_y^2 - u_x^2)^2 + 4(u_x u_y)^2} \le  |u_y^2 - u_x^2| + 2 |u_x u_y|.
\end{align*}
Therefore
\begin{equation*}
u_x^2 + u_y^2 \le 
\left\{\begin{aligned}
  &  u_y^2 - u_x^2 + 2 |u_x u_y|  \quad & \text{ when } \quad & u_y^2 \ge u_x^2, \\
  & (M+1) |u_x u_y| & \text{ when } \quad & u_y^2 \le u_x^2.
\end{aligned}\right.
\end{equation*}
We can then estimate
\begin{align*}
\int_{B_r} (u_x^2 + u_y^2) \,dx\,dy & \le  \int_{B_r \cap \{u_y^2 \ge u_x^2\}} \left( u_y^2 - u_x^2+ 2|u_x u_y| \right) \,dx\,dy + (M+1) \int_{B_r \cap \{u_y^2 \le u_x^2\}} | u_x u_y | \,dx\,dy. 
\end{align*}
On the other hand,
\begin{equation}
  \label{first B estimate}
  r^2 \mathcal{B}(r) 
  = \int_{B_r \cap \{u_y^2 \ge u_x^2\}} \left( u_y^2 - u_x^2 \right) \,dx\,dy - \int_{B_r \cap \{u_y^2 \le u_x^2\}} \left( u_x^2 - u_y^2 \right) \,dx\,dy.
\end{equation}
From \eqref{eq slope bound}, it follows that
\[
	\int_{B_r \cap \{u_y^2 \le u_x^2\}} \left( u_x^2 - u_y^2 \right) \,dx\,dy \le M \int_{B_r \cap \{u_y^2 \le u_x^2\}} |u_x u_y| \,dx\,dy
\]
and thus rearranging~\eqref{first B estimate} yields
\[
	\int_{B_r \cap \{u_y^2 \ge u_x^2\}} \left( u_y^2 - u_x^2 \right) \,dx\,dy \le r^2 \mathcal{B}(r) +M \int_{B_r \cap \{u_y^2 \le u_x^2\}} |u_x u_y| \,dx\,dy.
\]
In total, then, we find that
\begin{align*}
\frac{1}{r^2} \int_{B_r} (u_x^2 + u_y^2) \,dx\,dy & \le  \mathcal{B}(r) + \frac{2M+1}{r^2}  \int_{B_r \cap \{ u_y^2 \leq u_x^2\}} |u_x u_y| \,dx\,dy \\
	& \qquad + \frac{2}{r^2}  \int_{B_r \cap \{ u_y^2 \geq u_x^2\}} |u_x u_y| \,dx\,dy \\
	& \leq  \mathcal{B}(r)  + \max{\{ 2M+1,2\}} |\mathcal{A}(r)|.
\end{align*}
Using the Lipschitz continuity of $\mathcal{A}$ and $\mathcal{B}$ at $0$, this yields the claimed bound~\eqref{energy bound}.
\end{proof}

\subsection{Monotonicity formula for internal waves}
\label{monotonicity formula section}

The main goal of this section is to develop an analogue of the Vărvăruca--Weiss monotonicity formula that applies to the internal wave problem.  We will then be able to extract convergent blowup subsequences that resolve the structure of the free boundary in a neighborhood of the origin.

\begin{theorem}[Internal wave monotonicity formula] \label{monotonicity theorem}
  Suppose that $u$ is a variational solution to the free boundary elliptic problem~\eqref{elliptic PDE}.  For $0 < r < 1$, define 
  \begin{equation}
    \label{definition M(r)}
    \begin{aligned}
      \mathcal{M}(u; \, r)  & \colonequals \frac{1}{r^3} \int_{B_r} \left( |\nabla u|^2 - y \left( \rho_+ \chi^+(u) + \rho_- \chi^-(u) \right) \right) \, dx \, dy \\
      & \qquad - \frac{3}{2} \frac{1}{r^4} \int_{\partial B_r} u^2 \, d\mathcal{H}^1.
    \end{aligned}
  \end{equation}
  Then, $\mathcal{M}(u; \placeholder)$ is almost everywhere differentiable with derivative given by 
  \begin{equation}
    \label{dMdr formula}
    \partial_r \mathcal{M}(u; \, r) \equalscolon \mathcal{M}^\prime(r) = \frac{2}{r^3} \int_{\partial B_r} \left( \nabla u \cdot \nu - \frac{3}{2} \frac{u}{r} \right)^2 \, d\mathcal{H}^1.
  \end{equation}
\end{theorem}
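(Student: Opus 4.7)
\medskip

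\noindent\textbf{Proof proposal.} The strategy is the classical Weiss-style argument: differentiate $\mathcal{M}(u;\,r)$ termwise, use a Pohozaev-type identity obtained from Lemma~\ref{variational solution identity} with a dilation vector field, invoke Green's identity in each phase, and finally recognize the resulting boundary integrand as a perfect square. The scaling is natural because the nominal $r^{3/2}$ homogeneity of a blowup profile (dictated by matching the bulk Dirichlet term $|\nabla u|^2$ with $y$ on the free boundary) is precisely what balances the $r^{-3}$ and $r^{-4}$ weights in~\eqref{definition M(r)}.

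The first step is a direct computation. Writing $\rho(u) \colonequals \rho_+\chi^+(u)+\rho_-\chi^-(u)$, the coarea formula gives
\[
\frac{d}{dr}\int_{B_r}\bigl(|\nabla u|^2-y\rho(u)\bigr)\,dx\,dy
=\int_{\partial B_r}\bigl(|\nabla u|^2-y\rho(u)\bigr)\,d\mathcal H^1,
\]
while parametrizing $\partial B_r$ yields
\[
\frac{d}{dr}\int_{\partial B_r} u^2\,d\mathcal H^1 = \frac{1}{r}\int_{\partial B_r}u^2\,d\mathcal H^1 + 2\int_{\partial B_r}u\,\partial_\nu u\,d\mathcal H^1.
\]
Collecting these contributions to $\mathcal M'(r)$ produces an expression involving the three boundary integrals $\int_{\partial B_r}|\nabla u|^2$, $\int_{\partial B_r}y\rho(u)$, and $\int_{\partial B_r}u\,\partial_\nu u$, as well as two bulk integrals $\int_{B_r}|\nabla u|^2$ and $\int_{B_r}y\rho(u)$.

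To eliminate the bulk terms, I would apply Lemma~\ref{variational solution identity} to the dilation field $\phi(x,y)=(x,y)$. For this choice $\nabla\cdot\phi=2$ and $D\phi=I$, so $|\nabla u|^2\nabla\cdot\phi-2D\phi[\nabla u,\nabla u]\equiv 0$ in the bulk; while on $\partial B_r$ one has $\phi\cdot\nu=r$ and $\phi\cdot\nabla u = r\,\partial_\nu u$, and $\nabla\cdot(y\phi)=3y$. The resulting Pohozaev-type identity reads
\[
-3\int_{B_r}y\rho(u)\,dx\,dy \;=\; r\!\int_{\partial B_r}|\nabla u|^2\,d\mathcal H^1 - 2r\!\int_{\partial B_r}(\partial_\nu u)^2\,d\mathcal H^1 - r\!\int_{\partial B_r}y\rho(u)\,d\mathcal H^1.
\]
In parallel, the harmonicity of $u_\pm$ in $\Omega^\pm(u)$ together with the vanishing of $u$ on $\Gamma(u)$ gives, after summing Green's identity over the two phases,
\[
\int_{B_r}|\nabla u|^2\,dx\,dy \;=\; \int_{\partial B_r} u\,\partial_\nu u\,d\mathcal H^1.
\]
Substituting both identities into the expression from the first step, the $\int_{\partial B_r}|\nabla u|^2$ and $\int_{\partial B_r}y\rho(u)$ terms cancel, yielding
\[
\mathcal M'(r) \;=\; \frac{2}{r^3}\int_{\partial B_r}\!\left((\partial_\nu u)^2 - \frac{3}{r}\,u\,\partial_\nu u + \frac{9}{4r^2}u^2\right)d\mathcal H^1 \;=\; \frac{2}{r^3}\int_{\partial B_r}\!\left(\nabla u\cdot\nu-\frac{3u}{2r}\right)^{\!2}d\mathcal H^1,
\]
which is the claim.

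The main technical subtlety lies in the Green identity step: a variational solution is only $H^1 \cap C^0$ across $\Gamma(u)$, so one must argue that the free boundary contributes no singular mass. This is where the hypothesis $u_\pm \in C^2(\Omega^\pm(u))$ together with the continuity of $u$ (hence $u|_{\Gamma(u)}=0$ as a Sobolev trace) pays off: since the function $u$ is admissible as a test in both phases, integration by parts in $B_r\cap\Omega^\pm(u)$ contributes only from $\partial B_r\cap\Omega^\pm(u)$, the interior integral on $\Gamma(u)$ vanishing because $u=0$ there. A second, minor point is that all the differentiations above hold only for a.e.\ $r\in(0,1)$ and the expression for $\mathcal M'(r)$ should first be viewed in the weak sense; together with the integrability of the right-hand side this upgrades $\mathcal M$ to an absolutely continuous function whose almost-everywhere derivative agrees with~\eqref{dMdr formula}.
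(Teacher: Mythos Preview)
Your approach is essentially the same as the paper's: differentiate term by term, apply Lemma~\ref{variational solution identity} with the dilation field $\phi=(x,y)$ (the paper uses $\phi=(x,y)/r$, which only affects constants), use the integration-by-parts identity $\int_{B_r}|\nabla u|^2=\int_{\partial B_r}u\,\partial_\nu u$, and factor the resulting quadratic. The structure and all the key ingredients match.

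One technical point deserves tightening. In your last paragraph you justify the Green's identity step by integrating by parts separately in $B_r\cap\Omega^\pm(u)$ and arguing that the $\Gamma(u)$ contribution vanishes since $u=0$ there. This presumes enough boundary regularity of $\Gamma(u)$ to apply the divergence theorem in each phase, which is not part of the definition of a variational solution. The paper avoids this issue by regularizing: it sets $\zeta_\delta=\max\{u-\delta,0\}^{1+\delta}-|\min\{u+\delta,0\}|^{1+\delta}$, computes $\int_{B_r}\nabla u\cdot\nabla\zeta_\delta=\int_{\partial B_r}\zeta_\delta\,\partial_\nu u$ (legitimate since $\zeta_\delta$ is supported away from $\Gamma(u)$ and smooth in each phase), and then sends $\delta\searrow 0$. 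This yields~\eqref{integration by parts identity} without any a~priori regularity of the free boundary. Replacing your phase-by-phase integration by parts with this regularization would make your argument complete.
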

\begin{remark}
Here and in what follows, we adopt the common convention of suppressing the dependence of $\mathcal{M}$ on $u$, so that it is a real-valued function solely of $r$. 
\end{remark}
\begin{proof}
  Before beginning the main argument, let us pause to note that for any variational solution $u$, we have the integration by parts identity
  \begin{equation}
    \label{integration by parts identity}
    \int_{B_r} |\nabla u|^2 \, dx \, dy = \int_{\partial B_r} u \nabla u \cdot \nu \, d \mathcal{H}^1
  \end{equation}
  for a.e.~$r \in (0,1)$.  This can be derived by setting 
  \[
  	\zeta_\delta = \max\{ u- \delta, 0\}^{1+\delta} - |\min\{ u+\delta, 0\}|^{1+\delta} \qquad \textrm{for } \delta > 0,
\]
and computing
  \[
    \int_{B_r} \nabla u \cdot \nabla \zeta_\delta \, dx \, dy = \int_{\partial B_r} \zeta_\delta \nabla u \cdot \nu \, d\mathcal{H}^1,
  \]
  which becomes~\eqref{integration by parts identity} upon sending $\delta \searrow 0$.

  Now, fixing a variational solution $u$ and, for each $0 < r \ll 1$, define 
  \begin{equation}
    \label{definition I(r) and J(r)}
    \mathcal{I}(r) \colonequals \frac{1}{r^3} \int_{B_r} \left( |\nabla u|^2 - y \left( \rho_+ \chi^+(u) + \rho_- \chi^-(u) \right) \right) \, dx \, dy, 
    \quad
    \mathcal{J}(r) \colonequals \frac{1}{r^4} \int_{\partial B_r} u^2 \, d\mathcal{H}^1
  \end{equation}
  so that 
  \[
    \mathcal{M} = \mathcal{I} - \frac{3}{2} \mathcal{J}.
  \]
  Since $u \in H^1(B_1) \cap C^0(B_1)$, both $\mathcal{I}$ and $\mathcal{J}$ are clearly almost every differentiable, and an elementary computation shows that
  \begin{equation}
    \label{dJdr formula}
    \mathcal{J}^\prime(r) = \frac{2}{r^4} \int_{\partial B_r} u \nabla u \cdot \nu \, d\mathcal{H}^1 - \frac{3}{r^5} \int_{\partial B_r} u^2 \, d\mathcal{H}^1
  \end{equation}
  for almost every $0 < r \ll 1$. It remains only to compute the derivative of $\mathcal{I}$.  Applying Lemma~\ref{variational solution identity} using the test vector field $\phi = (x,y)/r$ yields 
  \begin{equation}
  \label{main testing identity boundary}
    \begin{aligned}
      0 & = -3\int_{B_r} \left( \rho_+ \chi^+ + \rho_- \chi^- \right) y \, dx \, dy \\
      & \qquad -r \int_{\partial B_r} \left( |\nabla u|^2 - \left( \rho_+ \chi^+ + \rho_- \chi^- \right) y -2 (\nabla u \cdot \nu)^2\right) \, d\mathcal{H}^1.
    \end{aligned}
\end{equation}
On the other hand, simply differentiating $\mathcal{I}$, we find that
\begin{align*}
  \mathcal{I}^\prime(r) & = -\frac{3}{r^4} \int_{B_r} \left( |\nabla u|^2 - y \left( \rho_+ \chi^+ + \rho_- \chi^- \right) \right) \, dx \, dy \\
    & \qquad + \frac{1}{r^3} \int_{\partial B_r} \left( |\nabla u|^2 - y \left( \rho_+ \chi^+ + \rho_- \chi^- \right) \right) \, d\mathcal{H}^1 \\
    & = \frac{2}{r^3} \int_{\partial B_r} \left( \nabla u \cdot \nu \right)^2 \, d\mathcal{H}^1 - \frac{3}{r^4} \int_{B_r} |\nabla u|^2 \, dx \, dy, 
\end{align*}
where the second equality follows from~\eqref{main testing identity boundary}.  Using the integration by parts identity~\eqref{integration by parts identity} on the second term on the right-hand side above then leads to
\[
  \mathcal{I}^\prime(r) = \frac{2}{r^3} \int_{\partial B_r} \left( \nabla u \cdot \nu \right)^2 \, d\mathcal{H}^1 - \frac{3}{r^4} \int_{\partial B_r}u \nabla u \cdot \nu \, d \mathcal{H}^1.
\]

Finally, combining the formulas above $\mathcal{I}^\prime$ with the formula for $\mathcal{J}^\prime$ in~\eqref{dJdr formula} gives
\begin{align*}
  \mathcal{M}^\prime(r) & = \mathcal{I}^\prime(r) - \frac{3}{2} \mathcal{J}^\prime(r) \\
    & = \frac{2}{r^3} \int_{\partial B_r} \left( \nabla u \cdot \nu \right)^2 \, d\mathcal{H}^1 - \frac{6}{r^4} \int_{\partial B_r} u \nabla u \cdot \nu \, d\mathcal{H}^1 + \frac{9}{2 r^5} \int_{\partial B_r} u^2 \, d\mathcal{H}^1,
\end{align*}
which again holds for almost every $0 < r \ll 1$. Factoring the integrand yields~\eqref{dMdr formula}.
\end{proof}

Assuming additional decay properties, the monotonicity formula implies the existence of a $3/2$-homogeneous blowup limit. 

\begin{lemma}[Blowup limit]
\label{blowup limits lemma}
Suppose that $u$ is a variational solution to the free boundary problem~\eqref{elliptic PDE} exhibiting the decay~\eqref{blowup sequence Lipschitz bound}, and define $\mathcal{M}$ as in~\eqref{definition M(r)}.
\begin{enumerate}[label=\rm(\alph*\rm)]
	\item The right-hand side limit $\mathcal{M}(u; \, 0+)$ exists and is finite.
	\item Given any sequence $r_m \searrow 0$, let $u_m$ be the corresponding blowup sequence~\eqref{blowup sequence}. Possibly passing to a subsequence, there exists $u_0 \in H_\loc^1(\mathbb{R}^2)$ such that $u_m \to u_0$ strongly in $H_\loc^1(\mathbb{R}^2)$ and $C_\loc^\varepsilon(\mathbb{R}^2)$ for all $\varepsilon \in (0,1)$.  Moreover,  $u_0$ is homogeneous of degree $3/2$ in that $u_0(\lambda^{3/2} \placeholder) = \lambda^{3/2} u_0$ for all $\lambda \in [0,\infty)$.
\end{enumerate}
\end{lemma}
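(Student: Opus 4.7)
The plan is to combine the rescaling invariance of the monotonicity formula from Theorem~\ref{monotonicity theorem} with the uniform Lipschitz bound \eqref{blowup sequence Lipschitz bound}. A direct change of variables yields the key identity
\[
	\mathcal{M}(u_m; R) = \mathcal{M}(u; R r_m) \qquad \textrm{for all } R > 0 \textrm{ and } m \textrm{ with } Rr_m < 1,
\]
so the blowup $u \mapsto u_m$ corresponds cleanly to a dilation of the radius variable in $\mathcal{M}$.

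For part (a), note that $u_m(0) = 0$ since the origin lies on the free boundary, and \eqref{blowup sequence Lipschitz bound} yields $|u_m(z)| \leq C|z|$ on $B_2$. Bounding each of the three integrals constituting $\mathcal{M}(u_m; 1)$ in \eqref{definition M(r)} via the Lipschitz estimate yields $|\mathcal{M}(u_m; 1)| \leq C$ uniformly in $m$, hence $|\mathcal{M}(u; r_m)| \leq C$ for every sequence $r_m \searrow 0$. Combined with the monotonicity $\mathcal{M}^\prime(r) \geq 0$ from Theorem~\ref{monotonicity theorem}, the limit $\mathcal{M}(u; 0+)$ exists and is finite.

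For part (b), the Arzelà--Ascoli theorem applied to the uniformly locally Lipschitz family $\{u_m\}$ furnishes a subsequence converging in $C_\loc^\varepsilon(\mathbb{R}^2)$ for every $\varepsilon \in [0,1)$ to some $u_0$, with $u_m \rightharpoonup u_0$ weakly in $H_\loc^1$ along a further subsequence. For the homogeneity, the scaling identity gives
\[
	\mathcal{M}(u_m; R_2) - \mathcal{M}(u_m; R_1) = \mathcal{M}(u; R_2 r_m) - \mathcal{M}(u; R_1 r_m) \longrightarrow 0
\]
as $m \to \infty$ for any $0 < R_1 < R_2$. Invoking \eqref{dMdr formula} together with the lower semi-continuity of the $L^2$ norm under weak convergence and the strong $L^2$ convergence of the traces $u_m|_{\partial B_s}$ on almost every sphere (via the compact embedding $H^{1/2} \hookrightarrow L^2$), I obtain
\[
	0 \geq \int_{R_1}^{R_2} \frac{2}{s^3} \int_{\partial B_s} \left( \nabla u_0 \cdot \nu - \tfrac{3}{2} \tfrac{u_0}{s} \right)^2 d\mathcal{H}^1 \, ds.
\]
Hence $z \cdot \nabla u_0 = \tfrac{3}{2} u_0$ almost everywhere in $\mathbb{R}^2 \setminus \{0\}$, which by Euler's identity is equivalent to $u_0$ being positively homogeneous of degree $3/2$. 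For the strong $H_\loc^1$ convergence, I would split $B_R$ into a $\delta$-neighborhood $U_\delta$ of $\{u_0 = 0\}$ and its complement: on $B_R \setminus U_\delta$, the $C^0$ convergence forces $u_m$ to have the same sign as $u_0$ for large $m$, so $u_m$ is harmonic there and interior elliptic estimates upgrade $C^0$ to $H^1$ convergence; on $U_\delta$, the uniform Lipschitz bound yields $\int_{U_\delta}(|\nabla u_m|^2 + |\nabla u_0|^2) \lesssim |U_\delta|$, which vanishes as $\delta \searrow 0$.

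The main obstacle, relevant to both the homogeneity and the strong convergence, is ensuring $\{u_0 = 0\}$ is Lebesgue-null, since the $\chi^\pm(u_m)$ terms and the thin-neighborhood estimate both require this. In the setting of Lemma~\ref{limiting solution lemma}, the monotonicity conditions \eqref{monotonicity assumptions} are stable under $C^0$ limits, so Lemma~\ref{streamlines lemma} then guarantees that $\{u_0 = 0\}$ is a Lipschitz graph and hence has zero Lebesgue measure.
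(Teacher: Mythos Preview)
Your overall strategy is sound, and your homogeneity argument via weak lower semicontinuity is a clean alternative to the paper's route. The paper first establishes strong $H^1_\loc$ convergence and then passes to the limit directly in the integrated formula~\eqref{dMdr formula}; you instead rewrite that integrated formula as a weighted $L^2$ norm on the annulus and use weak lower semicontinuity, which avoids needing strong convergence at that stage. Incidentally, the $\chi^\pm$ terms do not appear in $\mathcal{M}'$ at all, so your worry that the null-set issue affects the homogeneity step is unfounded; only weak $H^1$ and $C^0$ convergence are needed there.

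The genuine gap is in your strong $H^1_\loc$ convergence argument. Your splitting into $U_\delta$ and $B_R\setminus U_\delta$ breaks down precisely when $\{u_0=0\}$ has positive Lebesgue measure, and this is a real possibility here: by Lemma~\ref{limit of chi lemma}\ref{limiting chi monotone part}, under monotonicity the blowup limit is in fact $u_0\equiv 0$, so $|U_\delta|$ does not tend to zero as $\delta\searrow 0$ and your thin-neighborhood estimate says nothing. Your proposed rescue via Lemma~\ref{streamlines lemma} therefore does not work, since that lemma controls $\Gamma(u_0)$, not the (possibly full-measure) zero set of $u_0$. The paper bypasses this issue with an integration-by-parts trick: since each $u_m$ is harmonic off its own zero set, one has $\int|\nabla u_m|^2\zeta=-\int u_m\,\nabla u_m\cdot\nabla\zeta$ for any $\zeta\in C_c^1$, by the approximation argument behind~\eqref{integration by parts identity}. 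The right-hand side passes to the limit using only $u_m\to u_0$ in $C^0_\loc$ and $\nabla u_m\rightharpoonup\nabla u_0$ weakly, and the same identity holds for $u_0$. This yields $\limsup_m\int|\nabla u_m|^2\zeta\le\int|\nabla u_0|^2\zeta$, hence strong convergence, with no assumption whatsoever on $|\{u_0=0\}|$.
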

\begin{proof}
We verified in~\eqref{dMdr formula} that $\mathcal{M}(u; \placeholder)$ is nondecreasing, and it is bounded due to \eqref{double stagnation decay} and \eqref{blowup sequence Lipschitz bound}. Thus  $\mathcal{M}(u; \, 0+)$ exists and is finite. 

Let $r_m \searrow 0$ be given and define $u_m $ as in \eqref{blowup sequence}.  Then on each compact subset of $\mathbb{R}^2$, the sequence $\{u_m\}$ is uniformly bounded in $H^1$ by \eqref{double stagnation decay} and uniformly Lipschitz by~\eqref{blowup sequence Lipschitz bound}.  Thus there exists $u_0 \in H_\loc^1(\mathbb{R}^2) \cap \Lip_\loc(\mathbb{R}^2)$ so that $u_m$ converges to $u_0$ weakly in $H_\loc^1(\mathbb{R}^2)$ and strongly in $C^\varepsilon_\loc(\mathbb{R}^2)$ for all $\varepsilon \in (0,1)$.  

Our first claim is that this can be upgraded to strong convergence in $H_\loc^1$.  As $\nabla u_m \to \nabla u$ weakly in $L^2_\loc(\mathbb{R}^2)$, it suffices to prove that
\[
	\limsup_{m \to \infty} \int_{\mathbb{R}^2} |\nabla u_m |^2 \zeta \, dx \, dy \leq \int_{\mathbb{R}^2} |\nabla u_0|^2 \zeta \, dx \, dy
\]
for all $\zeta \in C_c^1(\mathbb{R}^2)$.  Each $u_m$ is harmonic outside its zero-set, and so the $C^\varepsilon_\loc$ convergence implies the same is true of $u_0$. Then, applying a version of the integration by parts identity~\eqref{integration by parts identity}, we infer that
\begin{align*}
	\int_{\mathbb{R}^2} |\nabla u_m|^2 \zeta \, dx \, dy & = -\int_{\mathbb{R}^2} u_m \nabla u_m \cdot \nabla \zeta \, dx \, dy \\
		& \leq \int_{\mathbb{R}^2} |u_m - u_0| |\nabla u_m \cdot \nabla \zeta| \, dx \, dy - \int_{\mathbb{R}^2} u_0 \nabla u_m \cdot \nabla \zeta \, dx \, dy \\
		& \longrightarrow -\int_{\mathbb{R}^2} u_0 \nabla u_0 \cdot \nabla \zeta \, dx \, dy = \int_{\mathbb{R}^2} |\nabla u_0|^2 \zeta \, dx \, dy
\end{align*}
as $m \to \infty$. Thus $u_m \to u_0$ strongly in $H_\loc^1$.

Next, we show that $u_0$ is homogeneous of degree $3/2$.  Indeed, integrating the formula for $\mathcal{M}^\prime$ in~\eqref{dMdr formula} gives
\begin{align*}
	\mathcal{M}(u; \, R_2) - \mathcal{M}(u; \, R_1) & =  \int_{R_1}^{R_2} \frac{2}{r^3} \int_{\partial B_r} \left( \nabla u\cdot \nu - \frac{3}{2} \frac{u}{r} \right)^2 \, d\mathcal{H}^1 \, dr ,
\end{align*}
for almost every $0 < R_1 < R_2$.  Now, setting $R_1 = r_m \sigma_1$ and $R_2 = r_m \sigma_2$ and rescaling above yields 
\begin{align*}
	\mathcal{M}(u; \,  r_m \sigma_2) - \mathcal{M}(u; \, r_m \sigma_1) & = \int_{R_1}^{R_2} \frac{2}{r^3} \int_{\partial B_r} \left( \nabla u\cdot \nu - \frac{3}{2} \frac{u}{r} \right)^2 \, d\mathcal{H}^1 \, dr  \\
		& = 2 \int_{B_{\sigma_2}\setminus B_{\sigma_1}} \frac{1}{|(x,y)|^5} \left( (x,y) \cdot \nabla u_m - \frac{3}{2} u_m \right)^{2} \, dx \, dy.
\end{align*}
Sending $m \to \infty$, and recalling that $u_m \to u_0$ in $H_\loc^1$, we then arrive at the identity
\[
	\int_{B_{\sigma_2} \setminus B_{\sigma_1}} \frac{1}{|(x,y)|^5} \left( (x,y) \cdot \nabla u_0 - \frac{3}{2}u_0 \right)^2 \, dx \, dy = 0, 
\]
which verifies the claimed homogeneity property.
\end{proof}

\subsection{Lower bounds on the velocity}
\label{lower bound section}

The purpose of this section is to prove that if $u$ is a Lipschitz continuous monotone variational solution and if the Dirichlet energy of $u$ decays according to~\eqref{double stagnation decay}, then $\Gamma(u)$ is locally flat at $0$.  By a classical maximum principle result, flatness furnishes a lower bound on the decay of $|\nabla u|$ as we approach a stagnation point, which we will show is incompatible with~\eqref{double stagnation decay}.  This is the central contradiction used in the next section to rule out double stagnation formation. 

Define the \emph{Stokes corner} (pseudo) stream function in the lower half-plane by
\begin{equation}
\label{definition Stokes corner}
	\Stokesu(x,y) \colonequals 
    \left\{ \begin{aligned}
      -\frac{\sqrt{2}}{3} |(x,y)|^{3/2} \cos{\left( \frac{3}{2} \left( \theta - \frac{3\pi}{2} \right) \right)} & \qquad \textrm{for } \theta \in (\tfrac{7\pi}{6}, \tfrac{11\pi}{6}) \\
      0 & \qquad \textrm{for } \theta \not\in (\tfrac{7\pi}{6}, \tfrac{11\pi}{6}),
    \end{aligned} \right.
\end{equation}
where $\theta \in [0,2\pi)$ is the polar angle for $(x,y)$. One can verify that $\Stokesu$ is a variational solution to the free boundary problem~\eqref{elliptic PDE} with $\rho_+ = 0$ and $\rho_- = 1$. The next lemma states that the blowup limit either vanishes identically or is a linear combination of rotated Stokes corner solutions having disjoint support. Monotonicity prevents all but the vanishing scenario, but we make the effort to treat the general case. Although we are argue along the lines of \cite[Proposition 4.7]{varvaruca2011geometric}, having two fluids necessitates a more complicated analysis to discern the limiting configuration.

\begin{figure}
	\centering
	\includegraphics{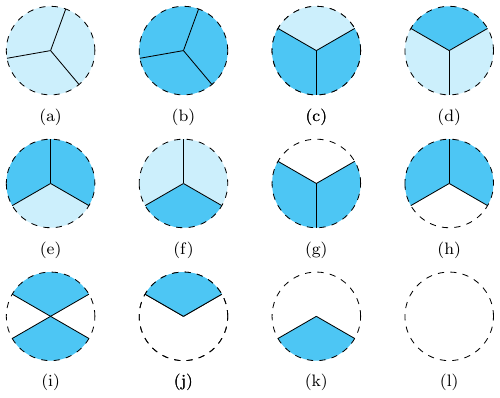}
	\caption{Possible blowup limits $u_0$ in a neighborhood of the origin. The components of $\{u_0 < 0\}$ are shaded in darker blue, components of  $\{u_0 > 0\}$ are shaded in lighter blue, $\inter\{ u =0\}$ is left white, and $\Gamma(u_0)$ is indicated by the (non-dashed) lines. Both (a) and (b) correspond to~\eqref{three component single phase blowup u}; note that any rotation of these two configurations is also possible. Cases (c)--(h) are captured by the ansatz~\eqref{three component two phase blowup u}, while (i)--(k) correspond to~\eqref{two Stokes flows}. For monotone solutions, only the vanishing limit of Case (l) can occur.}
	\label{u0 figure}
\end{figure}

\begin{lemma}[Stokes corner(s) or vanishing]
\label{limit of chi lemma}
Suppose that $u$ is a variational solution that exhibits the decay~\eqref{blowup sequence Lipschitz bound}, and in a neighborhood of $0$, the free boundary $\Gamma(u)$ is a union of finitely many Lipschitz curves.
\begin{enumerate}[label=\rm(\alph*\rm)]
	\item  For a sequence $r_m \searrow 0$, let $\{ u_m \}$ be the blowup sequence given by \eqref{blowup sequence} and let $u_0$ be a blowup limit as in Lemma~\ref{blowup limits lemma}. Then one of the following alternatives must hold. 
	\begin{enumerate}[label=\rm(\roman*\rm)]
		\item \textup{(Single phase)} There exists $\alpha \in \mathbb{R} \setminus \{ 0\}$ and $\theta \in [0,2\pi)$ such that
      \begin{equation}
        \label{three component single phase blowup u}
        u_0 = \alpha\left( \Stokesu + \Stokesu \circ A_{\tfrac{2\pi}{3}} + \Stokesu \circ A_{\tfrac{2\pi}{3}}^{-1} \right) \circ A_{\theta}  ,
      \end{equation}
		where $A_\theta$ denotes rotation by $\theta$ in the counterclockwise direction.
		\item  \textup{(Adjacent Stokes corners)} For some $\theta \in \{ 0, \pi\}$, and  either $\alpha_1 \leq 0$ and $\alpha_2 > 0$, or $\alpha_1 < 0$ and $\alpha_2 > 0$,
      \begin{equation}
        \label{three component two phase blowup u}
        u_0  = \left( \alpha_1 \Stokesu + \alpha_2 \Stokesu \circ A_{\tfrac{2\pi}{3}} + \alpha_2 \Stokesu \circ A_{\tfrac{2\pi}{3}}^{-1} \right) \circ A_{\theta}.
      \end{equation}
		\item \textup{(Opposing Stokes corners)} For some $\alpha_1, \alpha_2 \in \{ 0,\sqrt{|\rho_+-\rho_-|}\}$,
      \begin{equation}
        \label{two Stokes flows} 
        u_0 = \alpha_1 \Stokesu + \alpha_2 \Stokesu \circ A_{\pi}.
      \end{equation}
		Note that this includes the degenerate case $u_0 = 0$ and the situation where there is a single Stokes corner.
	\end{enumerate}
	\item \label{limiting chi part} Possibly passing to a subsequence, we have that 
	\[
		\chi^\pm(u_m) \longrightarrow \chi_0^\pm \qquad \textrm{in } L_\loc^1(\mathbb{R}^2), 
	\]
	where $\chi_0^\pm$ takes values in $\{0,1\}$, $\chi_0^\pm = 1$ and $\chi_0^\mp =0$ on $\{ \pm u_0 > 0\}$. Moreover, $\rho_+ \chi_0^+ +\rho_- \chi_0^-$ is constant on each connected component of $\inter\{ u_0 = 0 \} \cap \{ y > 0\}$ and $\inter\{ u_0 = 0\} \cap \{ y < 0\}$.
	\item \label{limiting u and chi with monotonicity part} \label{limiting chi monotone part} If $u$ is monotone, then necessarily $u_0 = 0$ and $\chi_0^\pm = \chi_{\{ \pm y > 0 \}}$ or $\chi_0^\pm = \chi_{\{ \mp y > 0\}}$.
\end{enumerate}
\end{lemma}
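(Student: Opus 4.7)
The plan is to use the $3/2$-homogeneity granted by Lemma~\ref{blowup limits lemma} to classify $u_0$ as a superposition of rotated Stokes corners via an ODE analysis on the circle, and then to use the monotonicity hypothesis in Definition~\ref{definition monotone} to rule out all nontrivial configurations for part (c).

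For part (a), I would pass to polar coordinates and write $u_0(r,\theta) = r^{3/2} f(\theta)$. Since $u_0$ is harmonic on each connected component of $\{u_0 \neq 0\}$, the profile $f$ satisfies $f'' + (9/4) f = 0$ on each such arc with $f$ vanishing at the endpoints; this forces every nonvanishing arc to have angular length exactly $2\pi/3$ and $f|_{\mathrm{arc}} = A\cos(\tfrac{3}{2}(\theta-\theta_0))$, i.e.~a scaled Stokes corner profile. A short combinatorial enumeration of how such $2\pi/3$-arcs (together with possibly longer arcs on which $f\equiv 0$) can partition $S^1$ yields exactly the three configurations (i)--(iii). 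The main technical obstacle lies in pinning down the admissible amplitudes in (ii) and (iii), in particular the explicit value $\alpha_j \in \{0,\sqrt{|\jump{\rho}|}\}$. This requires testing the variational identity from Lemma~\ref{variational solution identity} against vector fields supported transversely to each candidate free boundary ray, combined with the explicit computation $|\nabla\Stokesu|^2 = r/2$ on the support of $\Stokesu$.

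For part (b), I would proceed in three steps. First, the uniform local $H^1$ bound on $\{u_m\}$ combined with a uniform local perimeter bound for $\partial\Omega^\pm(u_m)$—which follows from the hypothesis that $\Gamma(u)$ is a finite union of Lipschitz curves near the origin, since Lipschitz constants are preserved by the homogeneous rescaling—provides, via BV-compactness, subsequential $L^1_\loc$ convergence of $\chi^\pm(u_m)$ to characteristic functions $\chi_0^\pm$. Second, the $C^\varepsilon_\loc$ convergence $u_m \to u_0$ immediately pins down $\chi_0^+ \equiv 1$ on $\{u_0 > 0\}$ and $\chi_0^- \equiv 1$ on $\{u_0 < 0\}$. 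Third, passing to the limit in the variational identity and testing against $\phi \in C_c^1(\inter\{u_0 = 0\}; \mathbb{R}^2)$ annihilates the $|\nabla u_0|^2$ terms and yields, after an integration by parts, the distributional identity $y\,\nabla(\rho_+\chi_0^+ + \rho_-\chi_0^-) = 0$; hence the density field is locally constant on each connected component of $\inter\{u_0 = 0\}$ avoiding $\{y = 0\}$, as claimed.

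For part (c), monotonicity is invariant under the rescaling defining $u_m$, so the limit $u_0$ inherits the bounds $|(u_0)_x| \leq M|(u_0)_y|$ and non-strict signs on $(u_0)_x,(u_0)_y$. A direct computation in polar coordinates shows that $\partial_x \Stokesu$ on the support of a single Stokes corner reduces to a multiple of $\cos(\tfrac{1}{2}\theta + \mathrm{const})$, which changes sign across the corner's $2\pi/3$ angular extent and so violates the non-strict-sign condition on $u_x$. Consequently cases (i)--(iii) all force every Stokes-corner amplitude to vanish, leaving $u_0 \equiv 0$. To identify $\chi_0^\pm$, Lemma~\ref{streamlines lemma} gives that $\Omega^+(u_m)$ is the hypograph (or epigraph, according to the sign of $u_y$) of a uniformly Lipschitz function $\eta_m(x) = \eta(r_m x)/r_m$ with $\eta_m(0) = 0$; passing to a subsequence produces a $1$-homogeneous Lipschitz limit $\eta_0$, and $\chi_0^+$ is the characteristic function of the corresponding hypograph. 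The constancy conclusion from part (b), applied on $\{y > 0\}$ and $\{y < 0\}$ and using $\rho_+ \neq \rho_-$, then forces $\eta_0 \equiv 0$ and yields the stated dichotomy for $\chi_0^\pm$.
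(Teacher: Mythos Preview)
Your treatment of parts (a) and (b) is essentially the paper's own: pass to the limiting variational identity~\eqref{u0 variational equation}, use $3/2$-homogeneity and separation of variables to force each nonvanishing arc to be a $120\degree$ Stokes corner, and obtain BV-compactness of $\chi^\pm(u_m)$ from the Lipschitz hypothesis on $\Gamma(u)$. One point you understate in (a) is that the variational testing along the rays of $\partial\Omega^-(u_0)$ fixes not only the amplitudes but also the admissible \emph{orientations}: it yields the pair of equations~\eqref{u0 boundary condition}, which in the two-phase situations force $\theta_{\mathcal K}\in\{\pi/6,7\pi/6\}$ as in~\eqref{thetaK alphaK formulas}. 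The paper's case analysis in (a) then rests on this orientation rigidity, and so does its proof of (c).

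Your part (c) contains a genuine gap. The formula you quote, $\partial_x(\text{Stokes corner}) \propto r^{1/2}\cos(\tfrac12\theta + \mathrm{const})$, is correct, but as $\theta$ ranges over the $2\pi/3$ support of a single corner the argument $\tfrac12\theta + \mathrm{const}$ only sweeps an interval of length $\pi/3$; a cosine over such an interval need \emph{not} change sign. Concretely, for a Stokes corner with axis at angle $\phi$ one computes $\partial_x u_0 = \tfrac32 r^{1/2}\cos(\tfrac12\theta - \tfrac32\phi)$ and $\partial_y u_0 = -\tfrac32 r^{1/2}\sin(\tfrac12\theta - \tfrac32\phi)$, and for $\phi$ in the ranges $[\pi/6,\pi/3]$, $[2\pi/3,5\pi/6]$ (and their reflections) \emph{neither} derivative changes sign on the support. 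Corners with exactly these axes do appear in alternative~(ii) (e.g.\ when $\alpha_1=0$), so the blanket conclusion ``cases (i)--(iii) all force every Stokes-corner amplitude to vanish'' does not follow from your computation for $\Stokesu$ alone.

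The paper closes this gap differently: it never tries to rule out every orientation by a sign argument. Instead it first uses~\eqref{u0 boundary condition}--\eqref{thetaK alphaK formulas} to pin down the orientations that can arise in (ii) and (iii), and then observes the geometric consequence~\eqref{monotonicity consequence u0} of monotonicity, namely that $\{y=0\}\setminus\{0\}$ cannot lie entirely inside a single phase of $u_0$ (since $\Gamma(u)$ is a Lipschitz graph through $0$). Each nontrivial configuration listed in (i)--(iii) violates this, leaving only $u_0\equiv 0$. Your identification of $\chi_0^\pm$ via the Lipschitz limit $\eta_0$ of the rescaled graphs is then the same as the paper's, though note that the constancy statement from part (b), together with the cone geometry forced by the graph of $\eta_0$, is already enough to conclude $\chi_0^\pm = \chi_{\{\pm y>0\}}$ (or the reverse) without first showing $\eta_0\equiv 0$.
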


\begin{proof}
  Denote $\chi_m^\pm \colonequals \chi^\pm(u_m)$.  By hypothesis, $\Gamma(u)$ is a finite union of Lipschitz graphs near $0$, and hence $\Gamma(u_m)$ is likewise the union of Lipschitz graphs on $B_1$ for $m$ sufficiently large (with the same Lipschitz constant as $\Gamma(u)$). In particular,  $\{ \chi_m^\pm \}$ is uniformly bounded in $BV$.  By the compactness of the embedding $BV \subset\subset L_\loc^1$, passing to a subsequence we have that $\chi_m^\pm \to \chi_0^\pm$ in $L_\loc^1$, for some $\chi_0^\pm \in L_\loc^1(\mathbb{R}^2)$.  

  It follows from the definition of a variational solution \eqref{definition critical point} and the fact that $u_m \to u_0$ in $H_\loc^1 \cap C_\loc^\varepsilon$ for each $\varepsilon \in (0,1)$, that
  \begin{equation}
    \label{u0 variational equation}
    0 = \int_{\mathbb{R}^2} \left(|\nabla u_0|^2 \nabla \cdot \phi - 2 D\phi[ \nabla u_0, \nabla u_0] + \left(  \rho_+ \chi_0^+ + \rho_- \chi_0^- \right) \nabla\cdot (y \phi) \right) \, dx \,dy
  \end{equation}
  for all vector fields $\phi \in C_c^1(\mathbb{R}^2; \mathbb{R}^2)$. Since the range of $\chi_m^\pm$ is a subset of $\{0,1\}$, the same is true of $\chi_0^\pm$.  Then we may infer from~\eqref{u0 variational equation} that $\rho_+ \chi_0^+ + \rho_- \chi_0^-$ is constant on the interior of the sets $\{ u_0 = 0\} \cap \{ y > 0\}$ and $\{ u_0 = 0 \} \cap \{ y < 0 \}$.   The uniform convergence of $u_m \to u_0$ further implies that $\chi_0^\pm = 1$ and $\chi_0^\mp = 0$ on $\{ \pm u_0 > 0\}$. This proves~\ref{limiting chi part}.

  First suppose that $u_0$ vanishes identically, in which case the ansatz \eqref{two Stokes flows} holds with $\alpha_1 = \alpha_2 = 0$. If in addition $u$ is monotone, then $\Gamma(u)$ is a Lipschitz graph over $x$ and includes $0$, hence there is a cone above the origin that lies in one fluid and cone below the origin that lies in the other fluid. If $\pm u > 0$ locally above $0$ and $\pm u < 0$ locally below $0$, it follows that $\chi_0^\pm \equiv 1$ $\chi_0^\mp \equiv 0$ on $\{ y > 0\}$ while $\chi_0^\pm \equiv 0$ and $\chi_0^\mp \equiv 1$ on $\{ y < 0\}$. This proves that $\chi_0^\pm$ are as in~\ref{limiting u and chi with monotonicity part} provided $u_0 \equiv 0$. 

  Assume next that $u_0 \not\equiv 0$. As $u_m \to u_0$ locally uniformly, we have that $u_0$ is harmonic on each connected component of $\{ u_0 = 0\}^c$ and homogeneous of degree $3/2$ by Lemma~\ref{blowup limits lemma}.  The connected components of $\{ u_0 = 0 \}^c$ are necessarily cones centered at the origin with interior angle $120\degree$. Indeed, an elementary computation shows that if $\mathcal{K}$ is such a cone with sides at $\theta = \theta_{\mathcal{K}}$ and $\theta = \theta_{\mathcal{K}}+2\pi/3$, then 
  \begin{equation}
    \label{cone blowup ansatz}
    u(x,y) = \alpha_{\mathcal{K}} |(x,y)|^{3/2} \cos{\left( \frac{3}{2} \left( \theta - \theta_{\mathcal{K}} \right) + \frac{\pi}{2} \right)} \qquad \textrm{on } \mathcal{K},
  \end{equation}
  for some $\alpha_{\mathcal{K}} \in \mathbb{R}$. Let $\rho_{\mathcal{K}} = \rho_+$ if $\mathcal{K} \subset \Omega^+(u_0)$ and set $\rho_{\mathcal{K}} = \rho_-$ otherwise. 
  By the same reasoning, along $\{ \theta=\theta_{\mathcal{K}}\}$ is $\mathcal{K}$ is adjacent to either (i) a cone where $u_0 \equiv 0$ or (ii) a $120\degree$ cone on which $u_0 > 0$ or $u_0 < 0$. Let $\mathcal{D}_1$ denote this region, and likewise let $\mathcal{D}_2$ be the cone adjacent to $\mathcal{K}$ along $\{ \theta = \theta_{\mathcal{K}} + 2\pi/3\}$. It is also important to remark that if we assume monotonicity of $u$, then uniform convergence of $u_m \to u_0$ forces
  \begin{equation}
    \label{monotonicity consequence u0}
    \{ y = 0 \} \setminus \{ 0 \} \not\subset \{ u_0 > 0\} \qquad \textrm{and} \qquad  \{ y = 0 \} \setminus \{ 0 \} \not\subset  \{ u_0 < 0\}.
  \end{equation}
  Were this to fail, then there would be a sequence of points along the $x$-axis converging to $0$ from the right and from the left that lie in the same fluid layer.

  The outward unit normal $\nu$ to $\mathcal{K}$ is constant on each component of $\partial \mathcal{K} \setminus \{ 0\}$. Fixing any $(x_0, y_0) \in \partial\mathcal{K} \setminus \{ 0\}$, and taking $\phi = \nu \zeta$ in \eqref{u0 variational equation} for $\zeta \in C_0^1(B_1)$ a cutoff function, we find that 
  \begin{equation}
    \label{u0 boundary condition}
    \left\{
      \begin{aligned}
        \frac{9}{4} \left( | \alpha_{\mathcal{K}}|^2 - |\alpha_1|^2 \right) & = (\rho_1-\rho_{\mathcal{K}})  \sin{(\theta_{\mathcal{K}})}  \\
        \frac{9}{4} \left( | \alpha_{\mathcal{K}}|^2 - |\alpha_2|^2 \right) & = (\rho_2 - \rho_{\mathcal{K}})  \sin{(\theta_{\mathcal{K}}+ \tfrac{2\pi}{3})},
      \end{aligned}
    \right.
  \end{equation}
  where $\alpha_j$ is the coefficient for $u$ in $\mathcal{D}_j$ and $\rho_j = \rho_\pm$ if $\mathcal{D}_j \subset \Omega^\pm(u_0)$. Note that $\alpha_j = 0$ if $\mathcal{D}_j$ is not a $120\degree$ cone with vertex at the origin. We pause here to gather some consequences of~\eqref{u0 boundary condition}. First, observe that if $\rho_{\mathcal{K}} = \rho_j$, then $\alpha_{\mathcal{K}} = \alpha_j$. In particular, a cone on which $u_0 > 0$ cannot be adjacent to a region where $u_0 \equiv 0$. Likewise, if $|\alpha_1|^2 = |\alpha_2|^2$ and $\rho_1 = \rho_2 \neq \rho_{\mathcal{K}}$, then one can solve~\eqref{u0 boundary condition} to find that 
  \begin{equation}
    \label{thetaK alphaK formulas}
    \theta_{\mathcal{K}} \in \left\{  \frac{\pi}{6},  \frac{7\pi}{6}\right\} \qquad \textrm{and} \qquad  |\alpha_{\mathcal{K}}|^2 =  |\alpha_1|^2\pm\frac{2}{9} ( \rho_{\mathcal{K}} - \rho_1),
  \end{equation}
  with $+$ corresponding to the case $\theta_{\mathcal{K}} = \pi/6$ and $-$ to $\theta_{\mathcal{K}}=7\pi/6$. 

  Consider first the possibility that $\{ u_0 = 0\}^c$ has exactly three connected components
  \[
    \mathcal{K}_1 \colonequals \{ 0 < \theta-\theta_0 < \tfrac{2\pi}{3} \}, \quad 
  \mathcal{K}_2 \colonequals \{ \tfrac{2\pi}{3} < \theta-\theta_0 < \tfrac{4\pi}{3} \}, \quad
  \mathcal{K}_3 \colonequals \{ \tfrac{4\pi}{3} < \theta-\theta_0 < 2\pi \},
\]
  where we may assume that $\theta_0 \in [0,2\pi/3)$. The restriction $u|_{\mathcal{K}_j}$ therefore has the form \eqref{cone blowup ansatz}; we will abbreviate the coefficients $\alpha_j \colonequals \alpha_{\mathcal{K}_j}$ for $j = 1, 2, 3$. Since each $\mathcal{K}_j$ has internal angle $120\degree$, there are at least two cones on which $u$ has the same sign. Without loss of generality, suppose that $\mathcal{K}_1$ is the other cone. If $u$ has the same sign on $\mathcal{K}_1$ as on $\mathcal{K}_2$ and $\mathcal{K}_3$, then it must take the form~\eqref{three component single phase blowup u}. It is easy to see that this incompatible with monotonicity. If the signs differ, then it follows from~\eqref{thetaK alphaK formulas} that $\theta_0 = \pi/6$ or $\theta_0 = 7\pi/6$, and hence $u_0$ is of the form~\eqref{three component two phase blowup u}. Were this to hold, then $\{ y =0\} \setminus \{ 0\} \subset \mathcal{K}_2 \cup \mathcal{K}_3$. As this contradicts monotonicity~\eqref{monotonicity consequence u0}, we infer that $\{ u_0 = 0\}^c$ cannot have three connected components in the setting of part~\ref{limiting u and chi with monotonicity part}. 

  Suppose instead that $\{ u_0 = 0\}^c$ has two connected components. Recycling notation, we call them $\mathcal{K}_1$ and $\mathcal{K}_2$. Again, each must be a $120\degree$ cone with vertex on the origin. If $\mathcal{K}_j \subset \{ u_0 > 0 \}$ for $j =1$ or $j=2$, then necessarily $\mathcal{K}_j$  is adjacent to a region where $u \equiv 0$, which we have already seen forces $u_0 \equiv 0$ on $\mathcal{K}_j$, a contradiction. Thus, $\mathcal{K}_1, \mathcal{K}_2 \subset \{ u < 0\}$. From~\eqref{u0 boundary condition} it is easy to see that there are only two possible configurations: 
  \[
    \overline{\mathcal{K}_1 \cup \mathcal{K}_2} = 
    \left\{ \begin{aligned}
      \{ \theta \in [\tfrac{\pi}{6}, \tfrac{5\pi}{6} ] \cup [\tfrac{7\pi}{6}, \tfrac{11\pi}{6}] \}  & \qquad \textrm{if } \partial\mathcal{K}_1 \cap \partial \mathcal{K}_2 = \{0\} \\
      \{ \theta \in (\tfrac{\pi}{6}, \tfrac{5\pi}{6} ) \}^c \quad \textrm{or} \quad \{ \theta \in (\tfrac{7\pi}{6}, \tfrac{11\pi}{6} ) \}^c	& \qquad \textrm{if } \partial\mathcal{K}_1 \cap \partial \mathcal{K}_2 \neq \{0\}.
    \end{aligned} \right.
  \]
The first of these corresponds to two Stokes corner flows, one centered along the positive $y$-axis and the second along the negative $y$-axis; this is captured by~\eqref{two Stokes flows} with $\alpha_1 = \alpha_2 = \sqrt{|\rho_+-\rho-|}$. The second possibility is contained in~\eqref{three component two phase blowup u} taking $\alpha_1 = 0$ and $\alpha_2 = \sqrt{|\rho_+-\rho_-|}$. Clearly, both of these cases violate monotonicity, and so we may exclude them in the setting of part~\ref{limiting u and chi with monotonicity part}.

Finally, if $\{u_0=0\}^c$ has exactly one connected component, then by the argument in the previous paragraphs, it must be a $120\degree$ cone $\mathcal{K} \subset \{u_0 < 0\}$. As it is bounded on both sides by a cone where $u_0 \equiv 0$, from~\eqref{thetaK alphaK formulas} we infer that $\mathcal{K} = \{ \pi/6 < \theta < 5\pi/6\}$ or $\mathcal{K} = \{ 7\pi/6 < \theta < 11 \pi/6\}$; these correspond to~\eqref{two Stokes flows} with precisely one of $\alpha_1, \alpha_2$ vanishing. Of course, thanks to Lemma~\ref{streamlines lemma}, both of these can be ruled out if $u$ is monotone, and hence in the setting of part~\ref{limiting u and chi with monotonicity part} we must have that $u_0 = 0$.
\end{proof}

\begin{figure}
	\centering
	\includegraphics{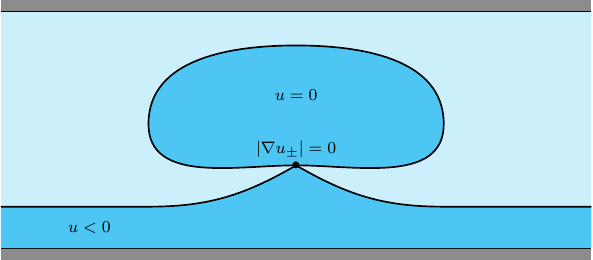}
	\caption{Conjectured limiting solitary internal wave along the family computed numerically in \cite{guan2021local}. The heavier fluid region (shaded in darker blue) is the union of what appears to be an extreme solitary gravity wave with a $120\degree$ internal angle at its crest and a ``bubble'' or ``mushroom head'' of fluid above it that is at rest in the moving frame.} 
	\label{mushroom figure}
\end{figure}

\begin{remark}
Part of the reason for characterizing $u_0$ without assuming monotonicity is to potentially address the mushroom-like limiting internal solitary waves observed numerically by~\cite{guan2021local}; see Figure~\ref{mushroom figure}. In that case, we would expect that: 
\[
	u_0 = \Stokesu, \quad \chi_0^- = \chi_{\{ y > 0 \} \cup\,\supp{\Stokesu}}, \quad \chi_0^+ = \chi_{\{ y < 0\} \setminus \supp{\Stokesu}}. 
\]
Note, however, that the solution is certainly not monotone, so a new justification for the Lipschitz bounds~\eqref{blowup sequence Lipschitz bound} would be needed.
\end{remark}

The next lemma exploits the monotonicity and the characterization of the blowup limit in Lemma~\ref{limit of chi lemma} to establish that the free boundary is flat at $0$.

\begin{figure}
\centering
\includegraphics{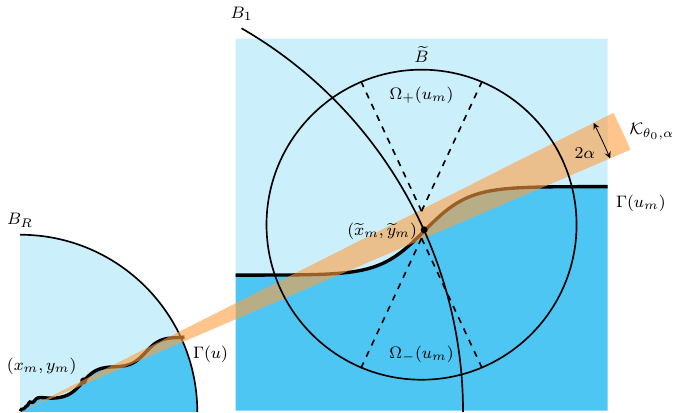}
\caption{The blowup region from the proof of Lemma~\ref{flatness lemma}. The cone $\mathcal{K}_{\theta_0,\alpha}$ is shaded in orange.  A hypothetical free boundary $\Gamma(u_m)$ for the blown up $u_m$ is draw in thick black; here, for definiteness, the upper layer (shaded in lighter blue) is taken to be $\Omega_+(u_m)$ while the lower layer (shaded in darker blue) is $\Omega_-(u_m)$. By assumption, there is a point $(\tilde x_m, \tilde y_m)$ on the unit circle and lying inside $\mathcal{K}_{\theta_0,\alpha} \cap \tilde B \cap \Gamma(u_m)$. The Lipschitz continuity of the free boundary implies that the dashed conical region above $(\tilde x_m, \tilde y_m)$ must lie in the upper fluid, while the dashed conical region below must be in the lower fluid.
}
\label{cone blowup figure}
\end{figure}

\begin{lemma}[Free boundary flatness]
\label{flatness lemma}
If $u$ is a variational solution that exhibits the decay~\eqref{blowup sequence Lipschitz bound} and is monotone, then $\Gamma(u)$ is locally the graph of a $C^1$ function of $x$ near $0$ and the normal at $0$ is purely vertical. 
\end{lemma}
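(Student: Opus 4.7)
The plan is to combine the vanishing blowup limit furnished by Lemma~\ref{limit of chi lemma}\ref{limiting u and chi with monotonicity part} with Caffarelli's two-phase free boundary regularity theory to promote the Lipschitz graph structure from Lemma~\ref{streamlines lemma} to $C^1$ regularity in a neighborhood of $0$. Throughout, let $\eta$ denote the monotone Lipschitz function with $\eta(0)=0$ and $|\eta'|\leq M$ whose graph is $\Gamma(u)$ locally, and set $\eta_r(x)\colonequals\eta(rx)/r$.

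First I would establish that $\eta$ is differentiable at $0$ with $\eta'(0)=0$. For any sequence $r_m \searrow 0$, Lemmas~\ref{blowup limits lemma} and~\ref{limit of chi lemma}\ref{limiting u and chi with monotonicity part} yield, along a subsequence, $u_{r_m} \to 0$ in $H^1_\loc \cap C^\varepsilon_\loc$ and $\chi^\pm(u_{r_m}) \to \chi_{\{\pm y > 0\}}$ (or with signs exchanged) in $L^1_\loc$. The rescaled free boundaries $\Gamma(u_{r_m})$ are the graphs of the uniformly Lipschitz functions $\eta_{r_m}$, and Arzelà--Ascoli combined with the $L^1_\loc$ convergence of the characteristic functions forces every uniform subsequential limit to vanish identically. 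Since the subsequence is arbitrary, $\eta_r \to 0$ locally uniformly, equivalently $\eta(x)/x \to 0$, which is precisely differentiability at $0$ with $\eta'(0)=0$; this gives verticality of the normal at $(0,0)$.

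Second, I would deduce $C^{1+\beta}$ regularity on the punctured neighborhood of $0$ and then propagate it to the origin. At any free boundary point $(x_0,\eta(x_0))$ with $\eta(x_0)\neq 0$, the Bernoulli condition $|\nabla u_+|^2-|\nabla u_-|^2 = -\jump{\rho}\eta(x_0)$ is non-degenerate, and monotonicity combined with the strong maximum principle (as in the proof of Lemma~\ref{streamlines lemma}) forces $|\nabla u_\pm|>0$ at such points; Caffarelli's regularity theory~\cite{caffarelli1987harnack1} then gives $\eta \in C^{1+\beta}$ in a punctured neighborhood. To extend continuity of $\eta'$ to $0$, I would argue by contradiction: suppose $x_n\to 0$ with $|\eta'(x_n)|\geq c > 0$ and set $r_n \colonequals 2|x_n|$. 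Crucially, the scaling $u\mapsto u(r_n\placeholder)/r_n^{3/2}$ preserves~\eqref{elliptic PDE} exactly, since the factor of $r_n$ appearing from $|\nabla u|^2$ cancels the one from $y$ in the Bernoulli condition. Hence each $u_{r_n}$ is a variational solution on a growing ball whose free boundary is the graph of $\eta_{r_n}\to 0$ uniformly on $[-1,1]$, while $\eta_{r_n}'(\operatorname{sign}(x_n)/2) = \eta'(x_n)$ remains bounded below by $c$. Applying Caffarelli's quantitative flatness-to-$C^{1+\beta}$ estimate at $(\operatorname{sign}(x_n)/2,\eta_{r_n}(\operatorname{sign}(x_n)/2))$, where the Bernoulli condition is non-degenerate for large $n$, yields a local $C^{1+\beta}$ bound for $\eta_{r_n}$ whose norm is controlled by $\|\eta_{r_n}\|_{L^\infty}$, contradicting $|\eta_{r_n}'|\geq c$.

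The hard part will be closing the scales in this final contradiction, because the Bernoulli non-degeneracy constant $|\eta_{r_n}(\pm 1/2)|$ along the rescaled free boundaries also vanishes in the limit. A careful quantitative analysis must show that the flatness parameter decays faster than the non-degeneracy constant deteriorates. If that balance proves too delicate, an alternative is to perform a second blowup centered at $(x_n, \eta(x_n))$, exploit the $3/2$-scaling invariance of~\eqref{elliptic PDE}, and apply the monotonicity formula from Theorem~\ref{monotonicity theorem} together with the rigidity of Lemma~\ref{limit of chi lemma} to rule out nontrivial tangent directions directly.
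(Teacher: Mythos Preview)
Your first two steps are correct and, taken together, deliver exactly what the paper's proof delivers: $C^{1+\beta}$ regularity away from the origin via Caffarelli's theory, and differentiability at $0$ with $\eta'(0)=0$ via the vanishing blowup limit in Lemma~\ref{limit of chi lemma}\ref{limiting u and chi with monotonicity part}. Your route to flatness at the origin (Arzel\`a--Ascoli on the rescaled profiles $\eta_r$, then identification of the limit from the $L^1_\loc$ convergence of $\chi^\pm(u_r)$) is logically equivalent to but stylistically different from the paper's. The paper argues more geometrically: if $\Gamma(u)$ met every neighborhood of $0$ inside a cone $\mathcal K_{\theta_0,\alpha}$ about some non-vertical direction $\theta_0$, then along a blowup sequence the uniform Lipschitz graph property would force a definite-area piece of the ``wrong'' phase to sit inside a ball $\tilde B$ contained entirely in $\{y>0\}$ (or $\{y<0\}$), so that the average of $\rho_+\chi_m^+ +\rho_-\chi_m^-$ over $\tilde B$ stays strictly between $\rho_+$ and $\rho_-$, contradicting $\chi_m^\pm \to \chi_{\{\pm y>0\}}$ from Lemma~\ref{limit of chi lemma}\ref{limiting chi monotone part}. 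This gives directly that for every aperture $2\mu<\pi$ the cone $\mathcal K_{\pi/2,\mu}\cap B_R$ lies in $\Omega^+(u)$, which is precisely the input to Corollary~\ref{lower bound decay lemma}.

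Your Step~3 is where you diverge, and you correctly flag the difficulty: the non-degeneracy constant of the Bernoulli condition along $\Gamma(u_{r_n})$ near $(\pm 1/2,\eta_{r_n}(\pm 1/2))$ tends to zero with $\eta_{r_n}(\pm 1/2)$, so the flatness-improvement estimates you invoke do not come with uniform constants, and neither of your proposed workarounds is carried out. It is worth noting, though, that the paper's own proof stops after establishing the cone containment and does not separately argue that $\eta'(x)\to 0$ as $x\to 0$; the downstream application in Corollary~\ref{lower bound decay lemma} only uses the cone containment $\mathcal D_{\mu,R}\subset\Omega^+(u)$, which your Step~1 (and the paper's argument) already provides. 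So your Step~3 is attempting to close a point that the paper itself leaves implicit, and which is in any case unnecessary for Theorem~\ref{overturning theorem}.
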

\begin{proof}
First, we recall that by Lemma~\ref{streamlines lemma}, monotonicity of $u$ implies the free boundary is a Lipschitz graph. Without loss of generality, we may assume that locally, the region above the graph lies in $\Omega^+(u)$ and the region below lies in $\Omega^-(u)$. Observe then that at any point $(x_0,y_0) \in \Gamma(u)$, we have that $u$ is a solution in the viscosity sense to the free boundary elliptic PDE~\eqref{elliptic PDE}. Moreover, if $y_0 \neq 0$, then the boundary condition is locally non-degenerate, and so classical free boundary regularity theory implies that for any $r > 0$, there exists $\beta = \beta(r) \in (0,1)$ such that $\Gamma(u) \setminus B_r$ is of class $C^{1+\beta}$.

Let $\mathcal{K}_{\theta,\mu}$ denote the open cone with axis in the $\theta$ direction and having aperture $2\mu$.  We claim that for any $\mu < \pi/2$, there exists $R = R(\mu) > 0$ such that
\[
	\left( \mathcal{K}_{\frac{\pi}{2}, \mu} \cap B_R \right) \subset \Omega^+(u).
\]
Seeking a contradiction, suppose there is some $0 < \mu < \pi/2$ so that $\Gamma(u) \cap B_R \cap \mathcal{K}_{\pi/2,\mu}$ is nonempty for all $0 < R \ll 1$.  It follows then that for any $0 < \alpha \ll 1$, there exists an angle $\theta_0 \in (\pi/2-\mu,\pi/2+\mu)$ and a sequence $\{(x_m, y_m)\} \subset \Gamma(u) \cap \mathcal{K}_{\theta_0,\alpha}$ so that $|x_m| \searrow 0$.

Let $\tilde \mu \in (\mu, \pi/2)$ be given, and set $z_0 = (\cos{\theta_0}, \sin{\theta_0})$. Then there exists $0 < \tilde r \ll 1$, depending only on $\mu-\tilde \mu$, such that 
\[
	\tilde B \colonequals B_{\tilde r}(z_0) \subset \mathcal{K}_{\frac{\pi}{2},\tilde{\mu}}.
\]
For $r_m \colonequals |(x_m, y_m)|$, letting $u_m$ be defined by \eqref{blowup sequence}, we then have the rescaled point 
\[
	(\tilde x_m, \tilde y_m) \colonequals \left( \frac{x_m}{r_m}, \, \frac{y_m}{r_m} \right) \in \tilde B^\prime \cap \Gamma(u_m),
\]
where $\tilde B^\prime = B_{2\sin{(\alpha/2)}}(z_0)$ is a smaller ball concentric with $\tilde B$. As noted above, the free boundary is locally the graph of a Lipschitz continuous function of $x$; let $M$ be the Lipschitz constant and set $\vartheta \colonequals \pi/2 -\arctan{M}$. It follows that there exists a truncated conical region above $(\tilde x_m,\tilde y_m)$ with aperture $2 \vartheta$ that lies in either $\Omega^+(u_m) \cap \tilde B$ or $\Omega^-(u_m) \cap \tilde B$ and a truncated conical region with the same aperture below $(\tilde x_m, \tilde y_m)$ that lies in the opposite region.  Note that each of these conical regions has area that is at least $\tilde{r}^2 \vartheta-O(\alpha)$. This situation is illustrated in Figure~\ref{cone blowup figure}.  For definiteness, assume that $\rho_+ < \rho_-$, Then, for $\alpha$ sufficiently small, 
\begin{align*}
	\rho_+ & < \rho_+ \left( 1- \frac{\vartheta}{2\pi }\right) + \rho_- \frac{\vartheta}{2\pi}  \leq \frac{1}{|\tilde B|} \int_{\tilde B} \left( \rho_+ \chi_m^+ + \rho_- \chi_m^- \right) \, dx \, dy. 
\end{align*}
But this is impossible, since $\tilde B \subset \mathcal{K}_{\pi/2,\tilde \mu} \subset \{ y > 0\}$ and so by Lemma~\ref{limit of chi lemma}\ref{limiting chi monotone part} we know that
\[
	\int_{\tilde B} \left( \rho_+ \chi_m^+ + \rho_- \chi_m^- \right) \, dx \, dy \longrightarrow \rho_+ |\tilde B|.  \qedhere
\]
\end{proof}

As a consequence of the flatness, we get a {lower bound} on the decay rate of $u$ in a neighborhood of $0$.  

\begin{corollary}[Lower bound on decay]
  \label{lower bound decay lemma}
  Under the hypotheses of Lemma~\ref{flatness lemma}, for every $\mu > 1$, there exists $R = R(\mu) > 0$ and a constant $c_\mu > 0$ such that
  \begin{equation}
    \label{lower bound decay}
    |u| \geq c_\mu |(x,y)|^{\mu} \cos{\left(\mu (\theta - \pi/2)\right)} \qquad \textrm{in } \mathcal{D}_{\mu,R},
  \end{equation}
  where $\theta = \arg{(x,y)}$, and $\mathcal{D}_{\mu,R}$ is the truncated conical region 
  \begin{equation}
    \label{definition truncated cone}
    \mathcal{D}_{\mu,R} \colonequals \{ (r, \theta) : 0 < r < R,~ |\theta-\pi/2| < \pi/(2\mu) \}.
  \end{equation}
\end{corollary}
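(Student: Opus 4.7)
The plan is to apply Oddson's theorem (Theorem~\ref{oddson theorem}) to $\sigma u$ on $\mathcal{D}_{\mu,R}$, where $\sigma \in \{+1,-1\}$ is the constant sign that $u$ takes just above the free boundary near the origin. The principal task is to convert the qualitative flatness from Lemma~\ref{flatness lemma} into the quantitative geometric containment $\mathcal{D}_{\mu,R} \subset \Omega^\sigma(u)$ required by Oddson.

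Fix $\mu > 1$. The truncated cone $\mathcal{D}_{\mu,R}$ has aperture $\pi/\mu < \pi$ and is centered along the positive $y$-axis, so every $(x,y) \in \mathcal{D}_{\mu,R}$ satisfies $y > |x|\cot(\pi/(2\mu))$. By Lemma~\ref{flatness lemma}, a neighborhood of $0$ in $\Gamma(u)$ coincides with the graph of a $C^1$ function $\eta$ with $\eta(0) = \eta'(0) = 0$, so $\eta(x) = o(|x|)$ as $x \to 0$. Choosing $R = R(\mu) > 0$ small enough that $|x|\cot(\pi/(2\mu)) > \eta(x)$ for all $|x| < R$ therefore guarantees that $\mathcal{D}_{\mu,R}$ lies strictly above the graph of $\eta$. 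By monotonicity and Lemma~\ref{streamlines lemma}, the region immediately above $\Gamma(u)$ near the origin is entirely contained in a single phase $\Omega^\sigma(u)$, so after further shrinking $R$ we obtain $\mathcal{D}_{\mu,R} \subset \Omega^\sigma(u)$.

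It remains only to verify the hypotheses of Theorem~\ref{oddson theorem} for $\sigma u$ on $\mathcal{D}_{\mu,R}$. Since $u$ is harmonic on $\Omega^\sigma(u)$, so is $\sigma u$, giving $\sigma u \in C^2(\mathcal{D}_{\mu,R})$ with $\Delta(\sigma u) = 0$; the global continuity $u \in C^0(B_1)$ yields $\sigma u \in C^0(\overline{\mathcal{D}_{\mu,R}})$; and $\sigma u(0,0) = 0$ because $(0,0) \in \Gamma(u) \subset \{u = 0\}$. Strict positivity of $\sigma u$ on $\mathcal{D}_{\mu,R}$ is immediate for $\sigma = -1$ from the definition $\Omega^-(u) = \{u < 0\}$; for $\sigma = +1$, it follows by combining the strong minimum principle with the strict sign of $u_y$ on $\{u \neq 0\}$ established in the proof of Lemma~\ref{streamlines lemma} together with the nontriviality of $u$, which together rule out $u$ vanishing identically on the component of $\Omega^+(u)$ adjacent to the free boundary. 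Oddson's theorem then yields
\[
    \sigma u(x,y) \geq c_\mu |(x,y)|^\mu \cos\!\left(\mu(\theta - \pi/2)\right) \qquad \text{in } \mathcal{D}_{\mu,R},
\]
and since $\sigma u = |u|$ on this cone, this is precisely~\eqref{lower bound decay}. The conceptually demanding step, namely establishing flatness at a potential stagnation point, has already been carried out in Lemma~\ref{flatness lemma}, so no substantive obstacle remains in this corollary.
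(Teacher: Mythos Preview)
Your proof is correct and takes essentially the same approach as the paper: use the flatness from Lemma~\ref{flatness lemma} to fit the truncated cone $\mathcal{D}_{\mu,R}$ inside one phase, then invoke Oddson's theorem. The paper's own proof is more terse---it simply asserts $\mathcal{D}_{\mu,R}\subset\Omega^+(u)$ (having already made the ``without loss of generality'' choice in the proof of Lemma~\ref{flatness lemma} that $\Omega^+(u)$ lies above the graph) and that $u$ is non-vanishing on $\overline{\mathcal{D}_{\mu,R}}\setminus\{0\}$---whereas you spell out the geometric containment via $\eta(x)=o(|x|)$ and treat both sign possibilities $\sigma=\pm1$, including a brief justification of strict positivity in the $\sigma=+1$ case.
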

\begin{proof}
Let $\mu > 1$ be given. By the flatness established in Lemma~\ref{flatness lemma}, we know that $\mathcal{D}_{\mu,R} \subset \Omega^+(u)$ for $R$ sufficiently small.  Since $u$ is harmonic on $\mathcal{D}_{\mu,R}$, non-vanishing on $\overline{\mathcal{D}_{\mu,R}} \setminus \{0\}$, and $u(0) = 0$, the lower bound \eqref{lower bound decay} follows from Theorem~\ref{oddson theorem}. 
\end{proof}

\subsection{Proof of overturning}
\label{proof of the main theorem section}

We are now positioned to prove the main result of the paper. First, we provide the proof of Lemma~\ref{limiting solution lemma}, which allows us to conclude that if overturning does not occur, then there exists a monotone variational solution to the model free boundary problem~\eqref{elliptic PDE} that is locally Lipschitz and exhibits the energy decay~\eqref{double stagnation decay}. 

\begin{proof}[Proof of Lemma~\ref{limiting solution lemma}]
From \cite[Lemma~5.26]{chen2023global}, we see that if overturning does not occur, then we can extract a translated subsequence so that in the limit $s_n \searrow -\infty$ it holds that
\begin{equation}
 \label{elevation convergence} 
  \begin{aligned}
    \eta(s_n) & \xrightarrow{C^\varepsilon} \eta \in \Lip([-1,1]) &\quad &\textrm{for all } \varepsilon \in (0,1), \\
     -\psi(s_n) & \xrightarrow{C^\varepsilon}  u \in \Lip(B_1) &  \quad & \textrm{for all } \varepsilon \in (0,1), \\
    -\nabla  \psi(s_n) & \xrightarrow{\textrm{weak-$\ast$ } L^\infty}  \nabla u  \\
    \nabla \psi_\pm(s_n)(0) & \longrightarrow (0,0). & \quad &
  \end{aligned}
\end{equation}
Note that here, to simplify the presentation, we have performed a further rescaling and translation so that the limiting domain is $B_1$ and the stagnation point is at the origin.  It is also important to observe that the construction in~\cite{chen2023global} ensures that $u$ does not vanish identically.  Moreover, $u$ is harmonic in $\Omega_+(u) \cup \Omega_-(u)$, and these are the subset of $B_1$ above and below the graph of $\eta$, respectively. 

By construction, each $\psi(s_n)$ is a classical solution to the free boundary elliptic problem~\eqref{elliptic PDE}.  Let
\[
	\vartheta_n \colonequals \arctan{\left( -\frac{\partial_x \psi(s_n)}{\partial_y \psi(s_n)} \right)}
\] 
denote the angle relative to the positive $x$-axis made by tangent to the level sets of $\psi(s_n)$.  Then $\vartheta$ is nonnegative by \cite[Theorem 5.1(a)]{chen2023global},  harmonic in each fluid region, vanishes on the upper and lower rigid boundaries, and is bounded above $\arctan{M}$ on the free boundary, where $M \colonequals \limsup_{s} \| \partial_x \eta(s)\|_{L^\infty}$.  By the maximum principle and the non-overturning assumption~\eqref{elevation no overturning}, we therefore have $\tan{\vartheta_n} \leq M$.  It follows that $-\psi(s_n)$ is a monotone solution in the sense of Definition~\ref{definition monotone}.  

Next, we confirm that the blowup sequences of $u$ satisfy the Lipschitz bound~\eqref{blowup sequence Lipschitz bound}. Let $r_m \searrow 0$ be given and define 
\[
	u_n \colonequals -\psi(s_n), \quad u_{m,n} \colonequals -\frac{\psi(s_n)(r_m \placeholder)}{r_m^{3/2}} \qquad \textrm{for } m, n \geq 1.
\]
Note that each of these are monotone solutions to the free boundary problem with classical regularity. It suffices to prove that 
  \begin{equation}
    \label{uniform Lipschitz bound umn}
    \sup_{m \gg 1} \limsup_{n \to \infty} \| u_{m,n} \|_{\Lip(B_R)} < \infty,
  \end{equation}
for any $R > 0$.  With that in mind, let $R >0$ be given and take $m \gg 1$ so that $3 r_m R < 1$. For any $z_0 = (x_0,y_0) \in \Gamma(u_{m,n}) \cap B_{2R}$ we have that $B_{R}(z_0) \subset B_{3R}$, and hence
\begin{align*}
	& \left( \frac{1}{R^2} \int_{B_{R}(z_0)} |\nabla u_{m,n}^+ |^2 \, dx \, dy \right) \left( \frac{1}{R^2} \int_{B_{R}(z_0)} |\nabla u_{m,n}^- |^2 \, dx \, dy \right) \\
	& \qquad  \lesssim \left( \frac{1}{9R^2} \int_{B_{3R}} |\nabla u_{m,n}^+ |^2 \, dx \, dy \right) \left( \frac{1}{9R^2} \int_{B_{3R}} |\nabla u_{m,n}^- |^2 \, dx \, dy \right) \\
	& \qquad \leq \left( \frac{1}{9r_m^3R^2} \int_{B_{3 r_m R}} |\nabla u_{n} |^2 \, dx \, dy \right)^2  \\
	& \qquad \lesssim \frac{1}{r_m^2} \left( \mathcal{A}_n(0+)^2 +\mathcal{B}_n(0+)^2 \right) + \| \mathcal{A}_n\|_{\Lip}^2  + \| \mathcal{B}_n \|_{\Lip}^2,
\end{align*}
where $\mathcal{A}_n \colonequals \mathcal{A}(u_n; \placeholder)$ and $\mathcal{B}_n \colonequals \mathcal{B}(u_n; \, \placeholder).$ In the last inequality, we have appealed to Theorem~\ref{energy bound theorem}\ref{energy bound part}. It is important to note that all of these suppressed constants above depend only on $R$, $M$, $\rho_1$ and $\rho_2$; they are independent of both $m$ and $n$. Since each $\nabla\psi_i(s_n)$ is $C^{1+\alpha}$ up to the boundary, the double stagnation limit in~\eqref{elevation convergence} implies that the densities $\mathcal{A}_n(0+)$ and $\mathcal{B}_n(0+)$ limit to $0$. Theorem~\ref{energy bound theorem}\ref{A B Lipschitz part} gives uniform bounds on Lipschitz norms of $\mathcal{A}_n$ and $\mathcal{B}_n$. For each fixed $m$, we therefore have 
\begin{equation}
  \label{uniform product of energies bound}
  \limsup_{n \to \infty} \left( \frac{1}{R^2} \int_{B_{R}(z_0)} |\nabla u_{m,n}^+ |^2 \, dx \, dy \right) \left( \frac{1}{R^2} \int_{B_{R}(z_0)} |\nabla u_{m,n}^- |^2 \, dx \, dy \right)  \lesssim 1.
\end{equation}
Now, applying the Alt--Caffarelli--Friedman monotonicity formula in Theorem~\ref{acf theorem} to $u_{m,n}$ at $z_0$ furnishes the bound
\begin{align*}
	\pi^2 |\partial_\nu u_{m,n}^+(z_0)|^2 |\partial_\nu u_{m,n}^-(z_0) |^2 & = \lim_{r \searrow 0} \left( \frac{1}{r^2} \int_{B_r(z_0)} |\nabla u_{m,n}^+|^2 \, dx \, dy \right) \left( \frac{1}{r^2} \int_{B_r(z_0)} |\nabla u_{m,n}^-|^2 \, dx \, dy \right) \\
		& \leq \left( \frac{1}{R^2} \int_{B_{R}(z_0)} |\nabla u_{m,n}^+ |^2 \, dx \, dy \right) \left( \frac{1}{R^2} \int_{B_{R}(z_0)} |\nabla u_{m,n}^- |^2 \, dx \, dy \right),
\end{align*}
which combined with the estimate~\eqref{uniform product of energies bound} gives
\begin{equation}
\label{product of normal bounds}
    \limsup_{n \to \infty} |\partial_\nu u_{m,n}^+(z_0)|^2 |\partial_\nu u_{m,n}^-(z_0) |^2  \lesssim 1.
\end{equation}
 As in the proof of \cite[Theorem 5.10]{chen2023global}, we use the dynamic condition~\eqref{eqn:stream:dynamic} at $z_0$ --- with the axes shifted and the Froude number normalized as discussed above ---  to control the difference between the normal gradients:
\[
	 \left| |\partial_\nu u_{m,n}^+(z_0)|^2 - |\partial_\nu u_{m,n}^-(z_0)|^2 \right| \lesssim 1+|y_0| \lesssim 1.
\]
Together with~\eqref{product of normal bounds}, this implies that for each fixed $m$, $\partial_\nu u_{m,n}^+$ and $\partial_\nu u_{m,n}^-$ are uniformly bounded along $\Gamma(u_{m,n}) \cap B_{2R}$ in terms of just $R$, $M$, $\rho_1$, and $\rho_2$. Because each $\pm u_{m,n}^\pm$ is harmonic and nonnegative on $\Omega^\pm(u_{m,n})$, it then follows from the Harnack inequality that $u_{m,n} \in \Lip(B_{R})$ with a Lipschitz bound likewise independent of $n$; see, for example, \cite[Lemma 11.19]{caffarelli2005geometric}. This completes the proof of \eqref{uniform Lipschitz bound umn}, and hence that $u$ satisfies~\eqref{blowup sequence Lipschitz bound}.

By construction, $u$ satisfies~\eqref{monotonicity assumptions}, so it remains only to show that it is a variational solution. First, we claim that $u$ is a classical solution on $B_1 \setminus B_\delta$ for each $\delta > 0$. To see this, note that because $\eta$ is monotone, $|y|$ is uniformly bounded away from $0$ on $\Gamma(u) \setminus B_\delta$. Were this not true, then it would have to be that $\Gamma(u) \cap B_\delta$ is purely horizontal, but this is impossible as the Hopf lemma could then be applied at $0$, and this would contradict the decay bound~\eqref{double stagnation decay}. It follows that $|y|$ is bounded uniformly away from $0$ on $\Gamma_n \setminus B_\delta$, where $\Gamma_n \colonequals \Gamma(\psi(s_n))$. Arguing as in \cite[Corollary 5.27]{chen2023global}, we conclude that there exists $\beta \in (0,1)$ such that $\Gamma_n \setminus B_\delta$ is of class $C^{1+\beta}$ for all $n \gg 1$, and hence $\Gamma(u) \setminus B_\delta$ is of class $C^{1+\beta/2}$. The elliptic estimates from~\cite[Section 5.6]{chen2023global} then imply that $u$ is a classical solution on this set, and so in particular it satisfies~\eqref{definition critical point} for all vector fields $\phi \in C_0^1(B_1\setminus B_\delta ; \mathbb{R}^2)$.  

Now, consider the situation near the origin.  Let $\zeta_\delta \in C_c^\infty(B_1)$ be a cutoff function satisfying
\begin{align*}
	0 \leq \zeta_\delta \leq 1, \qquad
	\supp{\zeta_\delta} \subset B_\delta, \qquad \zeta_\delta = 1 \ona B_{\delta/2}, \qquad |\nabla \zeta_\delta| \lesssim \frac{1}{\delta}.
\end{align*}
For a vector field $\phi \in C_0^1(B_1; \mathbb{R}^2)$, we may therefore write 
\[
	\phi = \zeta_\delta \phi + (1-\zeta_{\delta}) \phi \equalscolon \phi_0 + \phi_1.
\]
Since $\phi_1 \in C_0^1(B_1 \setminus B_\delta; \mathbb{R}^2)$, it follows from the previous paragraph that
\begin{align*}
	& \int_{B_1} \left( |\nabla u|^2 (\nabla \cdot \phi) - 2 D \phi[\nabla u,\nabla u] - ( \rho_+ \chi^+(u) + \rho_- \chi^-(u) ) \nabla \cdot (y \phi) \right) \, dx \, dy \\
	& \qquad = \int_{B_1} \left( |\nabla u|^2 (\nabla \cdot \phi_0) - 2 D \phi_0[\nabla u,\nabla u] - ( \rho_+ \chi^+(u) + \rho_- \chi^-(u) ) \nabla \cdot (y \phi_0) \right) \, dx \, dy \\
	& \qquad = \int_{B_\delta} \zeta_\delta \left( |\nabla u|^2 (\nabla \cdot \phi) - 2 D \phi[\nabla u,\nabla u] - ( \rho_+ \chi^+(u) + \rho_- \chi^-(u) ) \nabla \cdot (y \phi) \right) \, dx \, dy \\
	& \qquad \qquad + \int_{B_{\delta}} \left( |\nabla u|^2 ( \phi \cdot \nabla \zeta_\delta) - 2 (\phi \cdot \nabla u) ( \nabla \zeta_\delta \cdot \nabla u)  - ( \rho_+ \chi^+(u) + \rho_- \chi^-(u) ) (y\phi \cdot \nabla \zeta_\delta \right) \, dx \, dy.
\end{align*}
In light of the decay bound~\eqref{double stagnation decay}, both of the terms on the right-hand side vanish in the limit $\delta \searrow 0$.  We conclude, therefore, that $u$ is a monotone variational solution to~\eqref{elliptic PDE}.
\end{proof}

Combining the upper and lower bounds on the decay of the Dirichlet energy, the main result now follows very quickly.

\begin{proof}[Proof of Theorem~\ref{overturning theorem}]
Seeking a contradiction, suppose that overturning~\eqref{overturning} does not occur along $\cme$ and let $u$ be the variational solution to~\eqref{elliptic PDE} furnished by Lemma~\ref{limiting solution lemma}.  Applying Corollary~\ref{lower bound decay lemma} with $\mu = 5/4$, say, we see that there exists $R > 0$ such that $u$ obeys the lower decay bound~\eqref{lower bound decay} in the truncated conical region $\mathcal{D}_{\mu,R}$ with aperture $144\degree$.  But, since $0 \in \Gamma(u)$, we have the Poincar\'e-type inequality
\begin{align*}
	\frac{1}{r^2} \int_{B_r} |\nabla u|^2 \, dx \, dy & \geq \frac{1}{r^2} \int_{\mathcal{D}_{\mu,r}} |\nabla u|^2 \, dx \, dy 
		 \gtrsim \frac{1}{r^4} \int_{\mathcal{D}_{\mu,r}} u^2 \, dx \, dy  
		 \gtrsim r^{\frac{1}{2}}
\end{align*}
for all $0 < r \ll 1$.  The last inequality is obtained by explicitly integrating the right-hand side of~\eqref{lower bound decay} with $\mu = 5/4$. But this  lower bound contradicts the $O(r)$ upper decay bound from~\eqref{double stagnation decay}.  We conclude, therefore, that overturning~\eqref{overturning} must indeed occur.
\end{proof}

\section{Gravity currents and von Kármán's conjecture}
\label{gravity current section}

This section is devoted to the proof of Theorems~\ref{non-boussinesq gravity current theorem} and \ref{boussinesq gravity current theorem}. For simplicity, we will only give the argument for the limit along $\cmd$, as the proof for the limiting behavior along $\cme$ in the Boussinesq setting can be obtained through an easy adaptation.

Following $\cmd$ to its extreme, we know from~\eqref{depression limit alternatives} that if the interface does not overturn then it will approach the upper wall. Formally, one expects this to converge to a gravity current, as discussed in Section~\ref{intro gravity current section}, but the reformulation of the internal wave problem~\eqref{eqn:stream} in Dubreil-Jacotin variables used in~\cite{chen2023global} to construct $\cmd$ does not admit such solutions.  
In Section~\ref{limiting gravity currents section}, we introduce a model free boundary problem that permits gravity currents, and prove rigorously that, in the absence of overturning, there is a limiting domain variational solution to it along $\cmd$. 
In Section~\ref{gc monotonicity formula section}, we derive a Vărvăruca--Weiss monotonicity formula for this gravity current problem, analogous to Theorem~\ref{monotonicity theorem}, and use it to determine the possible blowup limits. Much of this analysis is carried out for the general case; in particular, it is not restricted the specific limiting bore along $\cmd$ but rather holds for general solutions to the gravity current problem. Finally, the proofs of Theorems~\ref{non-boussinesq gravity current theorem} and \ref{boussinesq gravity current theorem} are given in Section~\ref{von Karman section}.

\subsection{Limiting gravity currents}
\label{limiting gravity currents section}

Consider now a gravity current solution to the internal wave problem as depicted in Figure~\ref{gravity current figure}. Since the velocity is uniform in one layer, we can use an essentially single-phase model elliptic PDE. Formally, we look for nonpositive solutions to
\begin{equation}
\label{vK elliptic PDE}
  \left\{ \begin{aligned}
    \Delta u & = 0 & \qquad & \textrm{in } \Omega^-(u)  \cap B_1^- \\
    | \nabla u|^2 & = \jump{\rho}( y + Q) & \qquad & \textrm{on } \partial \Omega^-(u) \cap B_1^- \\
    u  & = 0 & \qquad & \textrm{on } T,
  \end{aligned} \right.
\end{equation}
where $B_1^-$ is the open half-ball in the lower half-plane and $T = \{ y = 0 \} \cap \overline{B_1^-}$ is the ``top'' of the domain, representing the rigid horizontal boundary. The sets $\Omega^+(u)$ and $\Omega^-(u)$ are defined as before, and we continue to assume that each of these phases has distinct constant density $\rho_+$ and $\rho_-$, respectively. Now, however, $\Omega^+(u)$ is a completely stagnant region where $u$ vanishes identically. Abusing terminology, we will call $Q$ the Bernoulli constant, as it will in fact be a rescaling of the Bernoulli constant in our application.

The free boundary of $u$ is again defined to be $\Gamma(u) \colonequals \partial \Omega^+(u) \setminus \partial B_1.$ As we are concerned with the local form of the free boundary at a point where it intersects the upper lid, we work under the assumption that $0 \in \Gamma(u)$. There are two distinct scenarios depending on the Bernoulli constant. If $Q = 0$, then classical solutions would have to satisfy $\nabla u_-(0) = 0$, meaning the contact occurs at a stagnation point. This is the gravity current model proposed by von Kármán~\cite{vonkarman1940engineer} and reconsidered by Benjamin~\cite{benjamin1968gravity}.  Based on their formal analysis, the expectation is that the free boundary must make a $60\degree$ angle with the upper wall. The second situation is where $Q \neq 0$, which again formally means $\nabla u_-(0) \neq 0$. In light of the boundary condition~\eqref{eqn:stream:dynamic}, this is what we will encounter in the limit along the depression bore family $\cmd$ for the non-Boussinesq case where $\rho_1 \neq \rho_2$. Conversely, the dynamic condition for the Boussinesq limit~\eqref{eqn:stream:dynamic-boussinesq} corresponds to a stagnation point at $0$.

As in Section~\ref{overturning section}, we will study \emph{domain variational solutions} of the model elliptic equation~\eqref{vK elliptic PDE}. Specifically, these are defined as follows. 

\begin{definition}
\label{vK variational solution definition}
A function $u \in H_\loc^1(B_1^-) \cap C^0(B_1^- \cup T)$ and constant $Q \leq 0$ are said to comprise a \emph{variational solution} to the free boundary elliptic PDE~\eqref{vK elliptic PDE}  provided $u \leq 0$ on $B_1^-$, $u_- \in C^2(\Omega^-(u))$, $u = 0$ on $T$, and
\begin{equation}
\label{definition vK critical point}
  \begin{aligned}
    0  & =  \int_{B_1^-} \left( |\nabla u|^2 (\nabla \cdot \phi) - 2 D \phi[\nabla u,\nabla u] - ( \rho_+ \chi^+(u) + \rho_- \chi^-(u) ) \nabla \cdot \left( (y+Q) \phi \right) \right) \, dx \, dy,
  \end{aligned}
\end{equation}
for all $\phi \in C_c^1(B_1^- \cup T; \mathbb{R}^2)$ with $\phi \cdot \nu = 0$ on $T$.
\end{definition}

Once again, blowup sequences will be a basic tool in discerning the local structure of the free boundary near the contact point at the origin. Now, however, the correct scaling depends on whether $Q$ is vanishing or non-vanishing. If $Q = 0$, then the Bernoulli condition is invariant under the change of variables $u \mapsto u(r \placeholder)/r^{3/2}$ used in Sections~\ref{model problem section}--\ref{proof of the main theorem section}. When $Q \neq 0$, though, as we approach $0$ along $\Gamma(u)$, we expect the kinetic energy term $|\nabla u_-|^2$ to balance against the constant $\jump{\rho} Q$, which suggests instead using the scaling $u \mapsto u(r \placeholder) / r$. With that in mind, for a fixed variational solution $u$ and sequence $r_m \searrow 0$, let
\begin{equation}
\label{vK blowup sequence}
  u_m \colonequals \left\{ \begin{aligned} 
    \frac{u(r_m \placeholder)}{r_m} & \qquad  \textrm{if } Q \neq 0 \\
    \frac{u(r_m \placeholder)}{r_m^{3/2}} & \qquad  \textrm{if } Q = 0.
  \end{aligned} \right.
\end{equation}

In order to infer the existence of suitable (sub)sequential blowup limit of $\{ u_m\}$, it will often be necessary to impose further assumption on $u$. For that reason, several of our results require that
\begin{equation}
  \label{Lipschitz constant decay}
  [ u ]_{\Lip; \, B_r^- } = \left\{ \begin{aligned}
    O(1) & \qquad \textrm{if } Q \neq 0 \\
    O(r^{1/2}) & \qquad \textrm{if } Q = 0
  \end{aligned} \right.  
    \qquad \textrm{as } r \searrow 0,
\end{equation}
where $[\placeholder]_{\Lip;\, B_r^-}$ denotes the Lipschitz semi-norm on $B_r^-$. Note that decay of the Lipschitz semi-norm is consistent with having a stagnation point at $0$ in the case $Q = 0$. The specific decay rate is naturally motivated by the observation above that the dynamic condition formally requires $|\nabla u|$ to balance $|y|^{1/2}$ along the free boundary. For a single (irrotational) fluid beneath vacuum, it follows for sufficiently regular solutions from the maximum principle, which is one of the main ideas in~\cite{varvaruca2011geometric}. Indeed, in that setting $y$ is a (harmonic) majorant for $|\nabla u|^2$ throughout the fluid domain, so one has the much stronger point-wise decay $|\nabla u|^2 = O(y)$ as $y \nearrow 0$.   For internal waves with two rigid boundaries, however, this is not true in general, as the pressure may be minimized on the upper wall rather than along the free boundary. Nonetheless, as we will show below, the limiting solution along $\cmd$ \emph{will} exhibit the decay~\eqref{Lipschitz constant decay}.

\begin{remark}
  It is useful to observe that assumption~\eqref{Lipschitz constant decay} can be equivalently expressed in terms of blowup sequences. Indeed, $u$ satisfies~\eqref{Lipschitz constant decay} if and only if, for any sequence $r_m \searrow 0$, the corresponding blowup sequence $\{u_m\}$ defined via~\eqref{vK blowup sequence} is uniformly locally Lipschitz in that, for any $K \subset\subset \mathbb{R}_{\leq 0}^2$,
  \begin{equation}
    \label{vK blowup sequence Lipschitz bound}
    \sup_m \| u_m \|_{\Lip(K)} \lesssim 1.
  \end{equation}

  In the case $Q \neq 0$, then the blowup sequence is clearly uniformly Lipschitz, so the statement in~\eqref{vK blowup sequence Lipschitz bound} is only meaningful when $Q = 0$. Arguing as in~\eqref{double stagnation decay}, the bound~\eqref{vK blowup sequence Lipschitz bound} in particular implies that
  \begin{equation}
    \label{vK grad u bound}
    \frac{1}{r_m^2} \int_{B_{r_m}^-} |\nabla u|^2 \, dx \, dy = 
    \left\{
      \begin{aligned}
        O(1) & \qquad  \textrm{if } Q \neq 0 \\
        O(r_m) & \qquad  \textrm{if } Q = 0
      \end{aligned}
    \right.
    \qquad \textrm{as } r_m \searrow 0.
  \end{equation}
\end{remark}

We are particularly interested in monotone variational solutions, which are variational solutions to the gravity current problem as in Definition~\ref{vK variational solution definition} that are also monotone in the sense of Definition~\ref{definition monotone}. Specializing to this class is justified by the following theorem.
 
\begin{theorem}[Limiting gravity current]
  \label{limiting gravity current theorem}
  Suppose that in the limit along $\cmd$ overturning does not occur:
  \begin{equation}
    \label{depression no overturning}
    \limsup_{s \to \infty} \| \partial_x \eta(s) \|_{L^\infty} < \infty.
  \end{equation}
  \begin{enumerate}[label=\rm(\alph*\rm)]
  \item There exists a bounded sequence $\{x_n\} \subset \mathbb{R}$, a sequence $s_n \nearrow \infty$, and a nontrivial domain variational solution $(u,Q)$ to the gravity current problem~\eqref{vK elliptic PDE} such that 
    \begin{equation}
      \label{depression convergence} 
      \begin{aligned}
        \eta(s_n)(\placeholder -x_n) & \xrightarrow{C^\varepsilon} \eta \in \Lip([-1,1]) &\quad &\textrm{for all } \varepsilon \in (0,1), \\
        -\psi(s_n)(\placeholder-x_n, \placeholder) & \xrightarrow{C^\varepsilon} u \in \Lip(B_1^-) &  \quad & \textrm{for all } \varepsilon \in (0,1), \\
        -\nabla  \psi(s_n)(\placeholder-x_n, \placeholder) & \xrightarrow{\textrm{weak-$\ast$ } L^\infty}  \nabla u.  
      \end{aligned}
    \end{equation}
  The Bernoulli constant and densities are given by 
  \[
    Q = \left\{ \begin{aligned}
      -\frac{F^2}{2} & \qquad \textrm{if } \rho_1 \neq \rho_2 \\
      0 & \qquad \textrm{if } \rho_1 = \rho_2,
    \end{aligned} \right.
  \quad 
  \rho_- = \left\{ \begin{aligned}
    \frac{2}{F^2} \rho_1 & \qquad \textrm{if } \rho_1 \neq \rho_2 \\
    8 \rho_1 & \qquad \textrm{if } \rho_1 = \rho_2,
  \end{aligned} \right.
  \quad
  \rho_+ = \left\{ \begin{aligned}
    \frac{2}{F^2} \rho_2 & \qquad \textrm{if } \rho_1 \neq \rho_2 \\
    0 & \qquad \textrm{if } \rho_1 = \rho_2.
  \end{aligned} \right.
\]
  \item The limiting $u$ is monotone in the sense of Definition~\ref{definition monotone} with $\Omega^+(u)$ being the region lying to the right of the graph of $\eta$ and $\Omega^-(u)$ the region lying to the left. The free boundary meets the rigid boundary in that $\eta(0) = 0$. Moreover, $u$ obeys~\eqref{Lipschitz constant decay}
  \end{enumerate}
\end{theorem}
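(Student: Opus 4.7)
The approach combines the qualitative properties of $\cmd$ from Theorem~\ref{global bore theorem}\ref{depression part} with compactness and a careful translation, aiming to pass to a limit in which the interface becomes tangent to the receding upper wall at the origin. First, the non-overturning hypothesis~\eqref{depression no overturning} together with Theorem~\ref{global bore theorem}\ref{depression part} forces $h_1(s) \to 1$, so the upper wall $y = h_2(s) = 1 - h_1(s)$ descends onto $y = 0$. Because $\partial_x \eta(s) < 0$ by~\eqref{monotonicity Euler variables}, I would choose the translations $x_n$ so that $\eta(s_n)(x_n)$ sits at a designated level midway between the upstream and downstream asymptotic values of $\eta(s_n)$; a conjugate-flow comparison along $\cmd$ shows that $\{x_n\}$ is bounded. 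Arzel\`a--Ascoli applied to the uniformly Lipschitz sequence $\eta(s_n)(\placeholder - x_n)$ then yields a $C^\varepsilon_\loc$ subsequential limit $\eta$ satisfying $\eta(0) = 0$, with $\eta$ nonincreasing by~\eqref{monotonicity Euler variables}.

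The crux of extracting compactness for the stream functions is the auxiliary-extension idea flagged in Section~\ref{intro outline section}: since boundary regularity degenerates as the upper wall merges with the free boundary, I remove both rigid walls and extend each $\psi_i(s_n)(\placeholder-x_n,\placeholder)$ across them by the upstream laminar stream function $-\sqrt{\rho_i}\,y$. The extended functions remain harmonic in their respective phases and satisfy the unchanged dynamic condition~\eqref{eqn:stream:dynamic} (respectively~\eqref{eqn:stream:dynamic-boussinesq}) on the interface. Applying the ACF monotonicity formula (Theorem~\ref{acf theorem}) to the non-negative subharmonic functions $|\psi_\pm(s_n)|$ on the extended domain, combined with the Bernoulli condition and interior Harnack, produces a uniform Lipschitz bound on $-\psi(s_n)(\placeholder - x_n, \placeholder)$ on $B_1^-$ that does not deteriorate as the upper wall descends. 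Standard elliptic compactness then yields the $C^\varepsilon_\loc$ and weak-$\ast$ $L^\infty$ convergences of~\eqref{depression convergence} to a Lipschitz function $u$.

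To identify the limit, note that since $h_2(s_n) \to 0$ and $\psi_2(s_n)|_{y = h_2(s_n)} = -h_2(s_n)\sqrt{\rho_2} \to 0$, the maximum principle forces $\psi_2(s_n) \to 0$ uniformly on its domain, so $u \equiv 0$ on $\Omega^+(u)$. Passing to the limit in the variational identity for $-\psi(s_n)$, restricted to test fields $\phi$ tangent to $T$ and supported in $B_1^- \cup T$, produces the integral identity~\eqref{definition vK critical point}; the specific values of $Q$ and $\rho_\pm$ come from substituting $h_2 \to 0$ into the dynamic conditions~\eqref{eqn:stream:dynamic} and~\eqref{eqn:stream:dynamic-boussinesq}. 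Monotonicity of $u$ in the sense of Definition~\ref{definition monotone} is inherited from the sign conditions~\eqref{monotonicity Euler variables} via weak-$\ast$ convergence of the gradients, and nontriviality follows from the upstream normalization $\nabla \psi_1(s_n) \to (0, -\sqrt{\rho_1})$ together with boundedness of $\{x_n\}$.

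The main obstacle is the decay bound~\eqref{Lipschitz constant decay}. In the non-Boussinesq case $Q = -F^2/2 < 0$ the dynamic condition is uniformly non-degenerate on $\Gamma(u)$, so the $O(1)$ Lipschitz bound is already contained in the compactness step. In the Boussinesq case $Q = 0$ one needs the sharper $O(r^{1/2})$ decay, reflecting the stagnation at the contact point; I plan to obtain this by a Bernstein-type maximum-principle argument in the spirit of V\u{a}rv\u{a}ruca, adapted to the two-layer extended problem. Consider $w \colonequals |\nabla \psi_1(s_n)|^2 + 8\rho_1 y$, which is subharmonic in the (extended) lower phase; by~\eqref{eqn:stream:dynamic-boussinesq} it equals $|\nabla \psi_2(s_n)|^2$ on the interface, and this vanishes uniformly in the limit by Theorem~\ref{global bore theorem}\ref{depression part}. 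On the laminar extension $y \le -h_1(s_n)$ below, the extended $\psi_1$ is exactly $-y\sqrt{\rho_1}$, so $w = \rho_1(1 + 8y) \le 0$ when $h_1(s_n) > 1/8$. The maximum principle therefore forces $w \le 0$ throughout, and passing to the limit produces $|\nabla u|^2 \le -8\rho_1 y$, which is~\eqref{Lipschitz constant decay}. The delicate point is quantifying the error arising from the non-vanishing upper-phase kinetic energy so that the limiting Bernstein bound survives uniformly along the approximating sequence.
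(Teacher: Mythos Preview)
Your overall strategy---extend past the rigid walls by the upstream laminar flow, use the ACF monotonicity formula on the extended problem to obtain uniform Lipschitz bounds, then pass to the limit---matches the paper's approach closely. Two points deserve correction, however.

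First, the translated limit with your choice of $x_n$ does \emph{not} automatically satisfy $\eta(0)=0$. With $x_n$ chosen so that $\eta(s_n)(x_n)$ is the midpoint between the upstream value $0$ and the downstream value, the limiting $\eta(0)$ equals that midpoint, which by the conjugate-flow formula is $\tfrac12\bigl(\tfrac{\sqrt{\rho_1}}{\sqrt{\rho_1}+\sqrt{\rho_2}}-1\bigr)<0$. The paper handles this by first passing to the limit and then invoking a conjugate-flow lemma for the degenerate case $\lambda=1$ (Lemma~\ref{conjugate flow lemma stagnant}): the flow-force balance forces the upstream limiting layer depth to be exactly $1$, and combined with the weak Bernoulli condition this shows $\eta\equiv 0$ on a half-line $(-\infty,x_0]$. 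A further translation then places the contact point at the origin. Your proposal is missing this step entirely; without it you have a limiting interface that lies strictly below the wall.

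Second, your Bernstein argument for the Boussinesq decay~\eqref{Lipschitz constant decay} does not work as written. The extended $\psi_1$ is only Lipschitz across the bed, so $\abs{\nabla\psi_1}^2$ has a jump there and $w=\abs{\nabla\psi_1}^2+8\rho_1 y$ is not subharmonic across $y=-h_1(s_n)$. Even restricting to the original lower layer, the maximum principle requires an upper bound for $w$ on the bed; the velocity bound~\eqref{lid horizontal velocity bound} gives only $-\partial_y\psi_1>\sqrt{\rho_1}$ there, which is the wrong direction. Moreover, Theorem~\ref{global bore theorem}\ref{depression part} asserts only $\sup_{\fluidS(s)}\partial_y\psi_2\to 0$, and since $\partial_y\psi_2<0$ this controls the value nearest zero, not a uniform bound on $\abs{\nabla\psi_2}^2$ along the interface (indeed upstream $\abs{\nabla\psi_2}^2\to\rho_2$). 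The paper instead reuses the mechanism from Lemma~\ref{limiting solution lemma}: it applies the energy-density bound of Theorem~\ref{energy bound theorem} (via the $\mathcal A,\mathcal B$ functionals) together with ACF monotonicity at the pre-limit stage to obtain uniform Lipschitz control on the blowup sequences, which is exactly~\eqref{Lipschitz constant decay}. This route avoids any pointwise Bernstein inequality and works uniformly in~$n$.
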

Our proof of Theorem~\ref{limiting gravity current theorem} involves a so-called ``conjugate flow'' analysis of the solutions along $\cmd$ and their translated limits. For this we avoid using a model problem as in Section~\ref{model problem section}, and work on the entire strip $R \colonequals \R \times (-\lambda,1-\lambda)$. Similarly to Definitions~\ref{variational solution definition} and \ref{vK variational solution definition}, we say that $\psi \in H^1_\mathrm{loc}(\overline\fluidD) \cap C^0(\overline\fluidD)$ is a domain variational solution of \eqref{eqn:stream} if $\psi_i \in C^2(\Omega^i(\psi))$ and, for all $\phi \in C^1_\mathrm c(\overline R;\R^2)$ with $\phi \cdot \nu = 0$ on $\partial R$ we have
\begin{align*}
  0  & =  \int_R \left( |\nabla \psi|^2 (\nabla \cdot \phi) - 2 D \phi[\nabla \psi,\nabla \psi] - ( \rho_1 \chi^1(\psi) + \rho_2 \chi^2(\psi) ) \nabla \cdot (2F^{-2} y  - 1)\phi \right) \, dx \, dy,
\end{align*}
where here $\Omega^1,\Omega^2$ and $\chi^1,\chi^2$ are the appropriate analogues of $\Omega^\pm$ and $\chi^\pm$. We do not require the asymptotic condition \eqref{eqn:stream:asym}, but do require kinematic boundary conditions \eqref{psi value on top} on $\partial R$.

\begin{lemma}[Flow force]\label{flow force lemma}
  Let $\psi$ be a domain variational solution of \eqref{eqn:stream} in the above sense. Then there exists a constant $S$, called the \emph{flow force}, such that, for almost all $x \in \R$,
  \begin{align*}
    S = \frac 12 \int_0^1 \big(\psi_y^2 - \psi_x^2 - (\rho_1 \chi^1(\psi) + \rho_2 \chi^2(\psi)) (2F^{-2}y - 1) \big)\, dy.
  \end{align*}
  \begin{proof}
    The proof is very similar to the proof of Lemma~\ref{variational solution identity}, but with the family of balls $B_r$ replaced by rectangles $(a,b) \by (-\lambda,1-\lambda)$ and with the special choice of vector field $\phi = (1,0)$.
  \end{proof}
\end{lemma}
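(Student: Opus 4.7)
The plan is to extract the flow force as an immediate consequence of the domain variational identity applied to a horizontal translation-style test field. Specifically, mirroring the structure of the proof of Lemma~\ref{variational solution identity}, I would take $\phi(x,y) = \zeta(x)(1,0)$ for a cutoff $\zeta \in C_c^1(\mathbb{R})$. Because the strip $R = \mathbb{R} \times (-\lambda, 1-\lambda)$ has only horizontal boundary components, the outward normal to $\partial R$ is vertical, so $\phi \cdot \nu = 0$ on $\partial R$; moreover, $\phi$ is compactly supported in $\overline{R}$ (compactly so in $x$ and trivially so in $y$), making it an admissible test field in the formulation given just before the lemma.

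For this $\phi$, direct computation yields $\nabla \cdot \phi = \zeta'(x)$, $D\phi[\nabla\psi,\nabla\psi] = \zeta'(x)\,\psi_x^2$, and $\nabla \cdot \bigl((2F^{-2}y - 1)\phi\bigr) = \zeta'(x)(2F^{-2}y - 1)$. Substituting into the variational identity and using $|\nabla\psi|^2 - 2\psi_x^2 = \psi_y^2 - \psi_x^2$ collapses everything to
\[
\int_R \zeta'(x)\Bigl(\psi_y^2 - \psi_x^2 - \bigl(\rho_1 \chi^1(\psi) + \rho_2 \chi^2(\psi)\bigr)(2F^{-2}y - 1)\Bigr) dx\, dy = 0.
\]

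Since $\psi \in H^1_\loc(\overline{R})$, Fubini's theorem ensures that for a.e.\ $x \in \mathbb{R}$ the inner $y$-integral is finite; writing $G(x)$ for one half of this inner integral, the identity above reads $\int_\mathbb{R} G(x)\,\zeta'(x)\, dx = 0$ for every $\zeta \in C_c^1(\mathbb{R})$. This is precisely the statement that $G \in L^1_\loc(\mathbb{R})$ has vanishing distributional derivative, hence coincides almost everywhere with a constant $S$, which is the claimed flow force. The only mild bookkeeping point is admissibility of the test field in the variational formulation, but compact support purely in $x$ is fine thanks to the bounded vertical extent of $R$ and the trivial vanishing of $\phi \cdot \nu$ on $\partial R$; no substantial obstacle arises.
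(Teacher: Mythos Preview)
Your proposal is correct and follows essentially the same approach as the paper. The paper phrases it as using rectangles $(a,b)\times(-\lambda,1-\lambda)$ with the constant field $\phi=(1,0)$ and a limiting cutoff argument in the style of Lemma~\ref{variational solution identity}, while you build the cutoff directly into $\phi=\zeta(x)(1,0)$; these are equivalent, and your route via the fundamental lemma of the calculus of variations is arguably cleaner.
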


The flow force allows us to classify the limiting states of monotone solutions. First we consider $\lambda \in (0,1)$, which will hold for the smooth solutions on $\cmd$.
\begin{lemma}[Conjugate flows for $\lambda \ne 0,1$]\label{conjugate flow lemma}
  In the setting of Lemma~\ref{flow force lemma}, suppose further that 
  \begin{enumerate}[label=\rm(\roman*)]
  \item $\psi$ is monotone in the sense of Definition~\ref{definition monotone};
  \item the free boundary $\Gamma(\psi)$ is the graph of a monotone Lipschitz function $\eta$ of $x$; 
  \item $\lambda \in (0,1)$; and
  \item the value of the flow force constant $S$ in Lemma~\ref{flow force lemma} agrees with the value $S_0(\lambda)$ for the trivial solution with $\eta \equiv 0$.
  \end{enumerate}
  Then $F^2$ is given explicitly by \eqref{definition front Froude number}, and the upstream and downstream thicknesses 
  \begin{align*}
    H_\mathrm u \colonequals \lim_{x \to -\infty} \eta(x) - \lambda,
    \qquad 
    H_\mathrm d \colonequals \lim_{x \to +\infty} \eta(x) - \lambda
  \end{align*}
  of the bottom layer can only take the values
  \begin{align*}
    H_\mathrm u, H_\mathrm d \in \left\{ \lambda, \frac{\sqrt{\rho_1}}{\sqrt{\rho_1}+\sqrt{\rho_2}}, 0, 1 \right\}.
  \end{align*}
\end{lemma}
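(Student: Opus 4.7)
The strategy will be a standard two-layer conjugate flow analysis: first extract limiting uniform shear flows upstream and downstream from the monotone graph assumption, then use the conserved flow force of Lemma~\ref{flow force lemma} together with the Bernoulli-type jump to reduce the problem to an algebraic one whose roots give the four listed values.

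\textbf{Uniform shear limits.} Since $\eta$ is monotone and bounded in $(-\lambda, 1-\lambda)$, it has well-defined one-sided limits $y_\mathrm{u} \colonequals \lim_{x\to-\infty}\eta(x)$ and $y_\mathrm{d} \colonequals \lim_{x\to+\infty}\eta(x)$. Combining the Lipschitz bound \eqref{Lipschitz assumption} with interior elliptic estimates for $\psi_1,\psi_2$, I extract from the sequences $\psi(\placeholder - x_n,\placeholder)$ with $x_n\to\mp\infty$ subsequential limits $\psi^\mathrm{u},\psi^\mathrm{d}$ that are themselves domain variational solutions of \eqref{eqn:stream} on $R$, carry the kinematic values \eqref{psi value on top}, and vanish on the now-horizontal interface $\{y=y_\mathrm{u}\}$ or $\{y=y_\mathrm{d}\}$. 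Translation invariance of the limiting problem together with harmonicity of $\psi^\mathrm{u},\psi^\mathrm{d}$ in each layer forces these limits to be independent of $x$ and piecewise affine in $y$; that is, horizontal shear flows. Using \eqref{psi value on top}, the layer velocities are
\begin{align*}
  (\psi_1^{\mathrm u})_y \;=\; -\frac{\sqrt{\rho_1}\,\lambda}{y_\mathrm{u}+\lambda},
  \qquad
  (\psi_2^{\mathrm u})_y \;=\; \frac{\sqrt{\rho_2}\,(1-\lambda)}{1-\lambda - y_\mathrm{u}},
\end{align*}
and analogously for $\mathrm{d}$.

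\textbf{Flow force and Bernoulli at the uniform states.} Plugging these shear profiles into Lemma~\ref{flow force lemma} yields an explicit rational function $\widetilde S(y;F,\lambda,\rho_1,\rho_2)$ giving the flow force of a uniform state with interface at height $y$. Evaluating the variational identity of Lemma~\ref{variational solution identity} on the limiting states against a vertical test vector field recovers the Bernoulli jump $|\nabla\psi_2^{\mathrm u}|^2 - |\nabla\psi_1^{\mathrm u}|^2 = (\rho_1-\rho_2)(2F^{-2}y_\mathrm{u}-1)$ across the flat interface, and likewise downstream. The hypothesis $S = S_0(\lambda)$ is precisely that $\widetilde S(y_\mathrm{u}) = \widetilde S(y_0) = \widetilde S(y_\mathrm{d})$, where $y_0$ is the interface height of the trivial flow corresponding to $H=\lambda$; by construction $y_0$ is automatically a root, so the nontrivial structure lies in the remaining roots.

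\textbf{Solving the polynomial.} Clearing denominators converts $\widetilde S(y) - \widetilde S(y_0) = 0$ into a quartic polynomial in $y$. A direct algebraic computation, essentially the one carried out in \cite[Lemma 5.5]{chen2023global} and \cite[Appendix A]{Laget1997interfacial}, factors this quartic and identifies its four roots: the trivial height (giving $H=\lambda$), the two wall-contact heights $y=-\lambda$ and $y=1-\lambda$ (corresponding to the two gravity-current alternatives $H \in \{0,1\}$), and a fourth root whose associated lower-layer thickness equals $\sqrt{\rho_1}/(\sqrt{\rho_1}+\sqrt{\rho_2})$. The compatibility of the trivial and nontrivial roots under the Bernoulli relation forces
\begin{equation*}
  F^2 \;=\; \frac{\sqrt{\rho_1}-\sqrt{\rho_2}}{\sqrt{\rho_1}+\sqrt{\rho_2}},
\end{equation*}
which is exactly \eqref{definition front Froude number}. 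Since each of $H_\mathrm{u}$ and $H_\mathrm{d}$ must realize one of these four roots, the lemma follows.

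\textbf{Main obstacle.} The delicate step is justifying that the translated limits $\psi^\mathrm{u},\psi^\mathrm{d}$ really are uniform shear flows in the variational sense, so that the flow force and Bernoulli identities are available in the limit. The monotonicity of $\eta$ together with the Lipschitz graph assumption is exactly what rules out pathologies at infinity (residual oscillations, persistent stagnation, non-graph components of the interface), ensuring the limiting interface is a single horizontal line; the variational identities then pass to the limit by the same cutoff-and-approximation argument used in the proof of Lemma~\ref{limiting solution lemma}. Once this is in hand, the algebraic step is elementary, and the assumption $\lambda\in(0,1)$ keeps the quartic nondegenerate with four distinct roots.
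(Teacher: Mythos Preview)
Your approach matches the paper's: extract piecewise-linear limiting shear states by translation and monotonicity, then combine the conserved flow force with the Bernoulli jump and defer to \cite{Laget1997interfacial,chen2023global} for the algebra. One inaccuracy is worth flagging: the wall-contact values $H\in\{0,1\}$ are \emph{not} roots of the polynomial obtained by clearing denominators in $\widetilde S(H)=S_0(\lambda)$, since the kinetic contribution to the flow force of the piecewise-linear state is $\tfrac12\rho_1\lambda^2/H+\tfrac12\rho_2(1-\lambda)^2/(1-H)$, which has genuine poles there. The paper instead sets these cases aside at the outset --- if $H_\mathrm d\in\{0,1\}$ there is nothing to prove --- and for $H_\mathrm d\notin\{0,1\}$ treats flow force and Bernoulli as a $2\times 2$ system in $(H_\mathrm d,F^2)$ whose only solutions are $H_\mathrm d\in\{\lambda,\sqrt{\rho_1}/(\sqrt{\rho_1}+\sqrt{\rho_2})\}$ with $F^2$ given by \eqref{definition front Froude number}. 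Since you ultimately cite the same references for the algebraic step, this does not derail your argument, but the quartic-with-four-roots picture is misleading.
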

\begin{proof}
  For smooth solutions this is well known; see, e.g.~\cite[Appendix~A]{Laget1997interfacial} or \cite[Lemma~5.5]{chen2023global}. The modifications for domain variational solutions satisfying the above hypotheses are relatively straightforward. The arguments for $H_\mathrm u,H_\mathrm d$ are identical, and so we only consider $H_\mathrm d$. Monotonicity and a translation argument imply that
  \begin{align*}
    \Psi^\mathrm d(y)
    \colonequals
    \lim_{x \to +\infty} \psi(x,y) 
  \end{align*}
  exists and is itself a domain variational solution of \eqref{eqn:stream}. This combined with \eqref{psi value on top} yields
  \begin{align*}
    \Psi^\mathrm d = 
    \begin{cases}
      -\dfrac \lambda{H_\mathrm d} \sqrt{\rho_1} (y+\lambda-H_\mathrm d) & y \in [-\lambda,- \lambda + H_\mathrm d], \\[2.5ex]
      -\dfrac {1-\lambda}{1-H_\mathrm d} \sqrt{\rho_2} (y+\lambda-H_\mathrm d) & y \in [-\lambda + H_\mathrm d, 1-\lambda].
    \end{cases}
  \end{align*}
  If $H_\mathrm d \ne 0,1$, then the appropriate weak form of \eqref{eqn:stream:dynamic} also gives
  \begin{align}
    \label{eqn:conj:Q}
    (\Psi^\mathrm d_{2,y})^2
    - (\Psi^\mathrm d_{1,y})^2
    + \frac{2(\rho_2-\rho_1)}{F^2}(H_\mathrm d - \lambda) = \rho_2 - \rho_1
    \qquad \text{ at } y = -\lambda + H_\mathrm d.
  \end{align}
  Arguing as in the proof of Lemma~\ref{flow force lemma}, one finds that $\Psi^\mathrm d$ has flow force constant $S=S_0(\lambda)$. This together with \eqref{eqn:conj:Q} forms a system of two algebraic equations for $H_\mathrm d$ and $F^2$, called ``conjugate flow equations''. The result now follows from an analysis of these equations exactly as in \cite{Laget1997interfacial} or \cite{chen2023global}.
\end{proof}

Our limiting gravity currents will have $\lambda = 1$, and in this extreme case one can prove a stronger result.

\begin{lemma}[Conjugate flows for $\lambda = 1$]\label{conjugate flow lemma stagnant}
  In the setting of Lemma~\ref{conjugate flow lemma}, suppose instead that $\lambda = 1$, $F^2$ is given by \eqref{definition front Froude number}, and 
  \begin{align*}
    H_\mathrm u = \lim_{x \to -\infty} \eta(x) + 1 > \frac{\sqrt{\rho_1}}{\sqrt{\rho_1}+\sqrt{\rho_2}}.
  \end{align*}
  Then there exists $x_0 \in \R$ so that $\eta \equiv 0$ on $(-\infty,x_0]$.
  \begin{proof}
    Since $\lambda = 1$, the boundary conditions \eqref{psi value on top} force $\psi_2 \equiv 0$. Arguing as in the proof of Lemma~\ref{conjugate flow lemma}, we find that
    \begin{align*}
      \lim_{x \to +\infty} \psi(x,y) 
      \equalscolon
      \Psi^\mathrm u(y)
      = 
      \begin{cases}
        -\dfrac 1{H_\mathrm u} \sqrt{\rho_1} (y+1-H_\mathrm u) & y \in [-1,- 1 + H_\mathrm u], \\[1.5ex]
        0 & y \in [-1 + H_\mathrm u, 0].
      \end{cases}
    \end{align*}
    has flow force $S = S_0(1)$. This leads to a single algebraic equation for $H_\mathrm u$. Since by assumption $H_\mathrm u > \sqrt{\rho_1}/(\sqrt{\rho_1}+\sqrt{\rho_2}) > 0$ and $F^2$ is given by \eqref{definition front Froude number}, one can check that the only solution of this equation is $H_\mathrm u = 1$.

    Suppose for contradiction that we also have $\eta(x) < 0$ for all $x \in \R$. By assumption $\eta(x) > -1$ for $x$ sufficiently large and negative, and so arguing as in the proof of Lemma~\ref{conjugate flow lemma} we obtain a weak form of \eqref{eqn:stream:dynamic}, namely \eqref{eqn:conj:Q} with $H_\mathrm d,\Psi^\mathrm d$ replaced by $H_\mathrm u,\Psi^\mathrm u$. Subsisting $H_\mathrm u = 1$ into this equation yields $\rho_2 = 0$, which is the desired contradiction.
  \end{proof}
\end{lemma}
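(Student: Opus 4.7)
The plan is to run a conjugate-flow analysis parallel to Lemma~\ref{conjugate flow lemma}, adapted to the degenerate case $\lambda=1$ in which the upper fluid is asymptotically absent. We begin by extracting the upstream profile: the monotonicity of $\psi$ in $x$ (from Definition~\ref{definition monotone}) together with a translation argument show that $\Psi^\mathrm u(y)\colonequals\lim_{x\to-\infty}\psi(x,y)$ exists and is itself a domain variational solution of \eqref{eqn:stream} depending only on $y$. Since $\lambda=1$ combined with \eqref{psi value on top} forces $\psi_2\equiv 0$, only the lower-layer part of the limit is nontrivial, and harmonicity together with the kinematic data on $y=-1$ and on the interface at $y=-1+H_\mathrm u$ pins it down to
\begin{align*}
\Psi^\mathrm u(y) = \begin{cases} -\dfrac{\sqrt{\rho_1}}{H_\mathrm u}(y+1-H_\mathrm u), & y\in[-1,-1+H_\mathrm u], \\ 0, & y\in[-1+H_\mathrm u,0]. \end{cases}
\end{align*}

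Plugging $\Psi^\mathrm u$ into the flow-force identity of Lemma~\ref{flow force lemma} and setting the result equal to the laminar value $S_0(1)$ yields a single algebraic equation in $H_\mathrm u$. Substituting the explicit $F^2$ from \eqref{definition front Froude number} converts this into a polynomial relation whose only root consistent with the hypothesis $H_\mathrm u>\sqrt{\rho_1}/(\sqrt{\rho_1}+\sqrt{\rho_2})$ is $H_\mathrm u=1$; this parallels the discrete classification already worked out for $\lambda\in(0,1)$ in Lemma~\ref{conjugate flow lemma}. Next I would deduce the conclusion by contradiction. By the monotonicity of the interface from Lemma~\ref{streamlines lemma} together with $\eta\leq 0$, a single zero $\eta(x_0)=0$ would already propagate to $\eta\equiv 0$ on $(-\infty,x_0]$, giving the claim. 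So suppose instead that $\eta(x)<0$ for every $x\in\R$. Since $H_\mathrm u>\sqrt{\rho_1}/(\sqrt{\rho_1}+\sqrt{\rho_2})>0$, we also have $\eta(x)>-1$ for $x$ sufficiently negative, so the interface is uniformly separated from both rigid walls on some half-line $(-\infty,x_*]$. On this half-line one can derive the weak form of \eqref{eqn:stream:dynamic} exactly as in the proof of Lemma~\ref{conjugate flow lemma} and then pass to the upstream limit, obtaining the analogue of \eqref{eqn:conj:Q} with $H_\mathrm d,\Psi^\mathrm d$ replaced by $H_\mathrm u,\Psi^\mathrm u$. Plugging in $H_\mathrm u=1$, $\Psi^\mathrm u_2\equiv 0$, and the explicit $F^2$ collapses this equation to $\rho_2=0$, the desired contradiction.

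The hard part will be justifying that the weak Bernoulli relation survives the upstream limit purely within the variational framework: we need $\Psi^\mathrm u$ and the limiting interface to carry enough regularity at $y=-1+H_\mathrm u$ to evaluate a pointwise identity. Once $\eta$ is trapped strictly inside $(-1,0)$ on $(-\infty,x_*]$, however, the Bernoulli constant there is non-degenerate and one is in the classical Caffarelli-type setting, which upgrades the free boundary to $C^{1+\beta}$; standard translation compactness then carries this regularity to $\Psi^\mathrm u$ and validates the algebraic collapse $\rho_2=0$.
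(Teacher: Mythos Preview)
Your proposal is correct and follows essentially the same route as the paper: identify $\Psi^\mathrm u$ from monotonicity and the boundary data, use the flow-force identity $S=S_0(1)$ to force $H_\mathrm u=1$, and then derive a contradiction from the Bernoulli relation \eqref{eqn:conj:Q} under the assumption $\eta<0$ everywhere. Your added remarks --- the monotonicity step reducing the conclusion to finding a single zero of $\eta$, and the Caffarelli-type regularity justification for passing the dynamic condition to the limit --- are reasonable elaborations that the paper leaves implicit.
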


\begin{proof}[Proof of Theorem~\ref{limiting gravity current theorem}]
  The argument will in many ways mirror that of Lemma~\ref{limiting solution lemma}. The main difference is that, because the interface does not remain uniformly separated from the upper rigid boundary, the Alt--Caffarelli--Friedman monotonicity formula cannot be applied directly to control the normal derivative of $\psi(s_n)$ along the free boundary. To circumvent this issue, we imagine extending the fluid domain as a uniform laminar flow. Physically, this amounts to replacing the rigid boundaries with vortex sheets.  We must ensure that the vortex sheet strength is nonnegative in order to apply Theorem~\ref{acf theorem}, as this implies the extended stream function is superharmonic.

  With that in mind, we first observe that the qualitative properties of the solutions along $\cmd$ given in~\eqref{monotonicity Euler variables} ensure that $\partial_x \psi(s) < 0$ in $\fluidD(s) \cup \fluidS(s)$ for all $s > 0$, and hence by the Hopf lemma,
  \[
    -\partial_x \partial_y \psi(s) < 0 \qquad \textrm{on } \fluidT(s). 
  \] 
  Here, we are writing $\fluidT(s) \colonequals \{ y = 1-\lambda(s)\}$ for the lid corresponding to $\fluidD(s)$. In particular, the above inequality implies that along $\fluidT(s)$, $-\partial_y \psi(s)$ is maximized in the upstream limit $x \to -\infty$. The same reasoning applied to the trace of $-\partial_y \psi(s)$ on the bed $\fluidB(s) \colonequals \{ y =-\lambda(s)\}$ reveals that $-\partial_y \psi(s)$ is minimized in the upstream limit. The explicit formula for the upstream state~\eqref{eqn:stream:asym} therefore implies the tangential velocity bounds
  \begin{equation}
    \label{lid horizontal velocity bound}
    -\partial_y \psi(s) < \sqrt{\rho_2} \quad \textrm{on }  \fluidT(s) \qquad -\partial_y \psi(s) > \sqrt{\rho_1} \quad \textrm{on }  \fluidB(s).
  \end{equation}

  For each $s > 0$, consider the extended pseudo stream function 
  \[
    \underline \psi(s) \in \Lip_\bdd(\overline{\mathscr{R}}) \cap C_\bdd^{2+\alpha}(\overline{\fluidD_1(s)}) \cap C_\bdd^{2+\alpha}(\overline{\fluidD_2(s)})
  \]
  defined on the ($s$-independent) strip $\extD \colonequals \mathbb{R} \times (-1,1)$ by
  \[
    \underline \psi(s) (x,y) \colonequals 
    \left\{ \begin{aligned}
      \psi(s) & \qquad \textrm{on } \overline{\fluidD(s)} \\
      -\sqrt{\rho_2} y & \qquad \textrm{for } y > 1-\lambda(s) \\
      -\sqrt{\rho_1} y & \qquad \textrm{for } y < -\lambda(s).
    \end{aligned} \right.	
  \]
  Denote by $\underline\psi_1(s)$ the restriction of $\underline\psi(s)$ to the region below $\fluidS(s)$ and $\underline\psi_2(s)$ for the restriction the region above $\fluidS(s)$. From~\eqref{lid horizontal velocity bound}, it then follows that
  \begin{align*}
    -\Delta \underline \psi_2(s) & = \left( \sqrt{\rho_2} + \partial_y \psi(s) \right) d\mathcal{H}^1 \resmes \fluidT(s) \geq 0, \\
      -\Delta \underline \psi_1(s) & = - \left(  \sqrt{\rho_1} + \partial_y \psi(s)  \right) d\mathcal{H}^1 \resmes \fluidB(s) \geq 0,
  \end{align*}
  where the inequalities above are interpreted in the sense of distributions on $\mathscr{R}$. Note that the upstream velocity field corresponding to $\underline \psi(s)$ is the same uniform laminar flow as before, except that it has been extended from $y = -1$ to $y = 1$.

  Now, supposing that overturning does not occur, we have by~\eqref{depression limit alternatives} that $\lambda(s) \to 1$ as $s \to \infty$. On the other hand, from \cite[Lemma 5.9]{chen2023global}, the gradient $\nabla \underline \psi(s)$ is bounded uniformly in $L^2(K)$ for any $K \subset\subset \overline{\extD}$. As $\underline\psi_1(s)$ and $\underline\psi_2(s)$ are superharmonic, vanish on $\fluidS(s)$, and single-signed, the same argument as in \cite[Theorem 5.7]{chen2023global} confirms that $\| \underline \psi(s) \|_{\Lip(\extD)}$ is uniformly bounded. Again, this is possible because Theorem~\ref{acf theorem} controls the Lipchitz norm in an $O(1)$ neighborhood of $\fluidS(s)$.

  Fix a sequence $s_n \nearrow 0$. By (strict) monotonicity, there exists a sequence $\{ x_n \} \subset \mathbb{R}$ such that
   \begin{equation}
    \label{depression half height}
    \eta(s_n)(x_n) =  \frac{1}{2} \lim_{x \to \infty} \eta(s_n)(x)  =   \frac{1}{2} \left(  \frac{\sqrt{\rho_1}}{\sqrt{\rho_1}+\sqrt{\rho_2}} - \lambda(s_n) \right).
  \end{equation}
The previous paragraph furnishes uniform Lipschitz bounds on the translated sequences
  \[
    \sup_{n} \left( \|  \underline \psi(s_n)(\placeholder-x_n, \placeholder) \|_{\Lip(\extD)} + \| \eta(s_n)(\placeholder-x_n)\|_{\Lip(\mathbb{R})} \right) < \infty.
  \]
  It follows that there exists a subsequential limits  $\eta(s_n) \to \eta$ and $\psi(s_n) \to \psi$ that are locally uniform, for $\eta \in \Lip_\bdd(\mathbb{R})$ and $\psi \in \Lip_\bdd(\overline{\mathscr{R}})$. 
  
  Following the same strategy as in Lemma~\ref{limiting solution lemma}, we can prove that $-\psi$ is domain variational solution to the gravity current problem~\eqref{vK elliptic PDE} on $\mathbb{R} \times (-1,0)$, that is monotone in the sense of Definition~\ref{definition monotone}, and such each blowup sequence~\eqref{blowup sequence} has the uniform Lipschitz bound~\eqref{vK blowup sequence Lipschitz bound}. The values of $Q$, $\rho_+$, and $\rho_-$ asserted above are easily deduced from the dynamic condition in the internal wave problem~\eqref{eqn:stream:dynamic} and \eqref{eqn:stream:dynamic-boussinesq}. In view of~\eqref{depression half height}, we know that 
  \[
  	\lim_{x \to -\infty} \eta(x) + 1 > \eta(0) + 1 > \frac{\sqrt{\rho_1}}{\sqrt{\rho_1}+\sqrt{\rho_2}}.
  \]  
  Invoking Lemma~\ref{conjugate flow lemma stagnant}, we conclude that $\eta$ vanishes on some semi-infinite interval $(-\infty, x_0]$.  Since $\eta(0) < 0$, we know that $x_0 < 0$, and as $\eta$ is non-increasing, we can take $\eta(x) < 0$ for $x > x_0$. Performing a final horizontal translation so that $x_0$ is at the origin completes the proof.
\end{proof}

\subsection{A Vărvăruca--Weiss monotonicity formula for gravity currents}
\label{gc monotonicity formula section}

For both the stagnation case ($Q = 0$) and non-stagnation case ($Q \neq 0$), we will use a monotonicity formula in the spirit of~\cite{varvaruca2011geometric} to infer the existence of a blowup limit, which will ultimately allow us to understand the geometry of the interface near the contact point at $0$. Convergence of the blowup sequence requires the Lipschitz bound~\eqref{Lipschitz constant decay}, however, we do not need the must stronger assumption that $u$ is monotone in the sense of Definition~\ref{definition monotone}.

\begin{theorem}[Gravity current monotonicity formula]
\label{gc monotonicity formula theorem}
Suppose that $u$ is a variational solution to the free boundary elliptic problem~\eqref{vK elliptic PDE} and let
\[
\begin{aligned}
	\mathcal{I}_\gc(u; \, r) & \colonequals 
		\left\{ \begin{aligned}
			\frac{1}{r^2} \int_{B_r^-} \left( |\nabla u|^2 - Q \left( \rho_+ \chi^+(u) + \rho_- \chi^-(u) \right) \right) \, dx \, dy & \qquad \textrm{if } Q \neq 0 \\
			\frac{1}{r^3} \int_{B_r^-} \left( |\nabla u|^2 - y \left( \rho_+ \chi^+(u) + \rho_- \chi^-(u) \right) \right) \, dx \, dy & \qquad \textrm{if } Q = 0
		\end{aligned} \right. \\
	\mathcal{J}_\gc(u; \, r) & \colonequals
		\left\{ \begin{aligned}
			 \frac{1}{r^3} \int_{\partial B_r^-} u^2 \, d\mathcal{H}^1 & \qquad \textrm{if } Q \neq 0 \\
			 \frac{1}{r^4} \int_{\partial B_r^-} u^2 \, d\mathcal{H}^1 & \qquad \textrm{if } Q = 0,
		\end{aligned} \right.
\end{aligned}
\]
and
\[
\label{gc M definition}
	\mathcal{M}_\gc = \mathcal{M}_\gc(u; \, r) \colonequals
		\left\{ \begin{aligned}
			\mathcal{I}_\gc - \mathcal{J}_\gc & \qquad \textrm{if } Q \neq 0\\
			\mathcal{I}_\gc - \frac{3}{2} \mathcal{J}_\gc & \qquad \textrm{if } Q = 0.
		\end{aligned} \right.
\]
Then, $\mathcal{M}_\gc(u; \, \placeholder)$ is almost everywhere differentiable with derivative given by
\[
	\mathcal{M}_\gc^\prime(r) = 
		\left\{ \begin{aligned}
			\frac{2}{r^2} \int_{\partial B_r^- \setminus T} \left( \nabla u \cdot \nu - \frac{u}{r} \right)^2 \, d\mathcal{H}^1 - \frac{3}{r^3} \int_{B_r^-} y \left(\rho_+ \chi^+(u) + \rho_- \chi^-(u) \right) \, dx \, dy & \qquad \textrm{if } Q \neq 0 \\ 
			\frac{2}{r^{3}} \int_{\partial B_r^- \setminus T} \left( \nabla u \cdot \nu - \frac{3}{2} \frac{u}{r} \right)^2 \, d\mathcal{H}^1 & \qquad \textrm{if } Q = 0.
	\end{aligned} \right.
\]
\end{theorem}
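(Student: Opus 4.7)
The plan is to follow the recipe of Theorem~\ref{monotonicity theorem}, adapted to the half-ball $B_r^-$ with its rigid top boundary $T$. Two workhorses will play the role analogous to \eqref{integration by parts identity} and Lemma~\ref{variational solution identity}: first, the integration-by-parts identity
\[
  \int_{B_r^-} |\nabla u|^2 \, dx\, dy = \int_{\partial B_r^- \setminus T} u\, (\nabla u \cdot \nu) \, d\mathcal{H}^1 \quad \textrm{for a.e. } r \in (0,1),
\]
obtained by testing the harmonicity of $u$ on $\Omega^-(u)$ against the approximating family $\zeta_\delta = -|\min(u+\delta,0)|^{1+\delta}$ (which has compact support in $\Omega^-(u)$ and vanishes on both $\Gamma(u)$ and $T$, since $u \equiv 0$ on both); second, a localized variational identity obtained from Definition~\ref{vK variational solution definition} by testing against the radial vector field $\phi = (x,y)/r$ truncated by a radial cutoff, exactly as in Lemma~\ref{variational solution identity}. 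The choice $\phi = (x,y)/r$ is admissible because $\phi \cdot \nu_T = y/r \equiv 0$ on $T$, and the remaining flux contributions from $T$ vanish identically thanks to $u \equiv 0$ (and hence $\partial_x u \equiv 0$) on $T$.

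For the stagnation case $Q = 0$, I would first compute $\mathcal{J}_\gc^\prime(r)$ via the polar parametrization $\theta \in (\pi, 2\pi)$ of $\partial B_r^- \setminus T$, yielding $\mathcal{J}_\gc^\prime(r) = \tfrac{2}{r^4}\int_{\partial B_r^- \setminus T} u\, (\nabla u \cdot \nu)\, d\mathcal{H}^1 - \tfrac{3}{r^5}\int_{\partial B_r^- \setminus T} u^2\, d\mathcal{H}^1$. Differentiating $\mathcal{I}_\gc$ via the coarea formula, substituting the variational identity for the resulting boundary integral of $|\nabla u|^2$, and applying the integration-by-parts identity to the bulk term $\int_{B_r^-}|\nabla u|^2$ transforms $\mathcal{I}_\gc^\prime$ into a combination of boundary integrals of $u\, \nabla u \cdot \nu$, $(\nabla u \cdot \nu)^2$, and the density-weighted $y$. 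Forming $\mathcal{I}_\gc^\prime - \tfrac{3}{2}\mathcal{J}_\gc^\prime$ then assembles the pure square $\tfrac{2}{r^3}\int_{\partial B_r^- \setminus T}\bigl(\nabla u \cdot \nu - \tfrac{3}{2}u/r\bigr)^2 d\mathcal{H}^1$, exactly mirroring the calculation in Theorem~\ref{monotonicity theorem}; the rigid boundary $T$ remains invisible to the algebra because both the Bernoulli weight $y$ and the kernel $u$ vanish on it.

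The non-stagnation case $Q \neq 0$ proceeds analogously, but with $\mathcal{I}_\gc$ and $\mathcal{J}_\gc$ weighted by one lower power of $r$, consistent with the blowup scaling $u \mapsto u(r\placeholder)/r$ appropriate when Bernoulli is non-degenerate. Here the variational identity produces the bulk contribution $-\tfrac{1}{r}\int_{B_r^-}(\rho_+\chi^+ + \rho_-\chi^-)(3y + 2Q)\, dx\, dy$, whose $Q$-weighted part cancels exactly against the $Q$-weighted density term introduced by the $-Q(\rho_+\chi^+ + \rho_-\chi^-)$ in the definition of $\mathcal{I}_\gc$. What remains in the bulk is the stated term $-\tfrac{3}{r^3}\int_{B_r^-} y(\rho_+\chi^++\rho_-\chi^-)\, dx\, dy$, while the boundary integrals assemble as in the $Q = 0$ case into the pure square $\tfrac{2}{r^2}\int_{\partial B_r^- \setminus T}\bigl(\nabla u \cdot \nu - u/r\bigr)^2\, d\mathcal{H}^1$.

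The main technical obstacle is rigorously justifying the integration-by-parts identity given the low regularity of $u$: one has only $u \in H^1_\loc(B_1^-) \cap C^0(B_1^- \cup T)$ and $u \in C^2(\Omega^-(u))$, not $C^1$ up to either $\Gamma(u)$ or $T$. The remedy, as in Theorem~\ref{monotonicity theorem}, is the cutoff-and-limit argument via $\zeta_\delta$: the continuity of $u$ and the fact that $u \equiv 0$ on $\{u \geq 0\} \cup T$ together ensure that $\zeta_\delta$ is a legitimate test function in $C^1_c(B_1^- \cup T)$ vanishing on $T$, and that $\zeta_\delta \to u$ in $H^1_\loc \cap C^0$ as $\delta \searrow 0$. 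An analogous cutoff argument in the radial variable delivers the variational identity for a.e.\ $r$. Once both identities are in place, the remainder is algebraic bookkeeping, essentially identical to the proof of Theorem~\ref{monotonicity theorem} with the extra care needed for the $T$-boundary.
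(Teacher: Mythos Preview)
Your proposal is correct and takes essentially the same approach as the paper: the paper likewise adapts the proof of Theorem~\ref{monotonicity theorem} to the half-ball using the integration-by-parts identity and the localized variational identity with test field $\phi=(x,y)/r$, noting that $\phi\cdot\nu=0$ on $T$ so the rigid-boundary contributions vanish. In fact the paper only writes out the $Q\neq 0$ case explicitly (relegating $Q=0$ to a one-line remark), and its computation matches yours, including the surviving bulk term $-\tfrac{3}{r^3}\int_{B_r^-} y(\rho_+\chi^++\rho_-\chi^-)\,dx\,dy$ coming from the $(3y+2Q)$ weight in the variational identity.
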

\begin{proof}
  The formula for the stagnation case ($Q = 0$) follows from an easy adaptation of the proof of Theorem~\ref{monotonicity theorem}, so we only present the proof for the non-stagnation case ($Q \neq 0$). An elementary computation shows that in this setting
  \begin{equation}
    \label{J prime formula Q neq 0}
    \mathcal{J}_\gc^\prime(u; \, r) = \frac{2}{r^3} \int_{\partial B_r^- \setminus T} u  \nabla u \cdot \nu \, d\mathcal{H}^1  -\frac{2}{r^4} \int_{\partial B_r^- \setminus T} u^2 \, d \mathcal{H}^1,
  \end{equation}
  for almost every $0 < r \ll 1$. On the other hand, 
  \begin{align*}
    \mathcal{I}_\gc^\prime(u; \, r) & = -\frac{2}{r^3} \int_{B_r^-} \left( |\nabla u|^2 - Q \left( \rho_+ \chi^+ + \rho_- \chi^- \right) \right) \, dx \, dy \\
    & \qquad + \frac{1}{r^2} \int_{\partial B_r^- \setminus T} \left( |\nabla u|^2 - Q \left( \rho_+ \chi^+ + \rho_- \chi^- \right) \right) \, d\mathcal{H}^1,
  \end{align*}
  where we are abbreviating $\chi^\pm \colonequals \chi^\pm(u)$. Using the test field $\phi = (x,y)/r$, which note satisfies $\phi \cdot \nu = 0$ on $T$, the analogue of Lemma~\ref{variational solution identity} for the gravity current problem yields the identity
  \begin{align*}
    & -\int_{B_r^-}  \left( 3y + 2Q \right) \left( \rho_+ \chi^+ + \rho_- \chi^- \right) \, dx \, dy \\
    & \qquad \qquad = r\int_{\partial B_r^- \setminus T} \left( |\nabla u|^2 -2 (\nabla u \cdot \nu)^2 - (\rho_+ \chi^+ + \rho_- \chi^-) ( y + Q) \right) \, d\mathcal{H}^1.
  \end{align*}
  Using this to eliminate the bulk integral of $Q (\rho_+ \chi^+ + \rho_- \chi^-)$ in the formula for $\mathcal{I}_\gc^\prime$ above, we find that
  \begin{align*}
    \mathcal{I}_\gc^\prime(u; \, r) & = - \frac{1}{r^3} \int_{B_r^-} \left( 2 |\nabla u|^2 + 3y \left( \rho_+ \chi^+ + \rho_- \chi^- \right)  \right)\, dx \, dy + \frac{2}{r^2} \int_{\partial B_r^- \setminus T} (\nabla u \cdot \nu)^2 \, d\mathcal{H}^1 \\
    & =  -\frac{3}{r^3} \int_{B_r^-}  y \left( \rho_+ \chi^+ + \rho_- \chi^- \right)  \, dx \, dy + \frac{2}{r^2} \int_{\partial B_r^- \setminus T} \left(  (\nabla u \cdot \nu)^2 - \frac{u}{r} \nabla u \cdot \nu \right) \, d\mathcal{H}^1,
  \end{align*}
  where the last line follows from the integration by parts identity~\eqref{integration by parts identity}. Combining this with the calculation of $\mathcal{J}_\gc^\prime$ in~\eqref{J prime formula Q neq 0} leads finally to
  \begin{align*}
    \mathcal{M}_\gc^\prime(r) & = \frac{2}{r^2} \int_{\partial B_r^- \setminus T} \left( (\nabla u \cdot \nu)^2 - 2\frac{u}{r} \nabla u \cdot \nu + \frac{u^2}{r^2} \right) \, d\mathcal{H}^1 -\frac{3}{r^3} \int_{B_r^-} y \left( \rho_+ \chi^+ + \rho_- \chi^- \right) \, dx \,dy,
  \end{align*}
  which becomes the claimed formula upon factoring the integrand in the first integral.
\end{proof}

With the monotonicity formula in hand, we can proceed in roughly the same fashion as in Section~\ref{monotonicity formula section} to discern the possible blowup limits. First consider the case where the Bernoulli constant is nonzero. 

\begin{lemma}[No stagnation blowup limit] \label{vK blowup lemma Q not 0}
Suppose that $u$ is a variational solution to the free boundary elliptic problem~\eqref{vK elliptic PDE} with $Q \neq 0$ and $u \in \Lip_\loc(B_1^- \cup T)$.  
\begin{enumerate}[label=\rm(\alph*\rm)]
	\item \textup{(Density)} The right-hand side limit $\mathcal{M}_\gc(u; \, 0+)$ exists and is finite. 
	\item \textup{(Blowup $u$)} \label{vK blowup up part Q not 0} For a sequence $r_m \searrow 0$, consider the blowup sequence $\{u_m\}$ defined by \eqref{vK blowup sequence}. Perhaps passing to a subsequence, 
	\[
		u_m \xrightarrow{H_\loc^1 \cap C_\loc^\varepsilon} \alpha y \qquad  \textrm{for all } \varepsilon \in (0,1),
	\]
	for some $\alpha \geq 0$.
	\item \textup{(Blowup domain)} \label{vK blowup domain part Q not 0} 
	Suppose further that in a neighborhood of $0$, the free boundary $\Gamma(u)$ is a union of finitely many Lipschitz curves. Set $\chi_m^\pm \colonequals \chi^\pm(u_m)$. Then there exists $\chi_0^+, \chi_0^- \in BV_\loc(\mathbb{R}_{\leq 0}^2)$ such that $\chi_m^\pm \to \chi_0^\pm$ in $L_\loc^1(\mathbb{R}_{\leq 0}^2)$. Moreover, $\chi_0^+ + \chi_0^- = 1$ on $\mathbb{R}_{\leq 0}^2$ and either $\chi_0^+$ or $\chi_0^-$ vanishes identically.  
\end{enumerate}
\end{lemma}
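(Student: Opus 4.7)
For part~(a), I begin by noting that both terms in the derivative formula from Theorem~\ref{gc monotonicity formula theorem} (for $Q \neq 0$) are pointwise nonnegative: the boundary integral is a square, and in the bulk integral the factor $-y > 0$ on $B_r^-$ multiplies the strictly positive density $\rho_+\chi^+(u) + \rho_-\chi^-(u)$. Hence $\mathcal{M}_\gc(u;\placeholder)$ is nondecreasing. The assumption $u \in \Lip_\loc(B_1^- \cup T)$ combined with $u(0)=0$ (since $0 \in \Gamma(u) \cap T$) yields $|u(z)| \leq L|z|$ and $|\nabla u| \leq L$ for some constant $L$, which in turn produces uniform bounds $|\mathcal{I}_\gc(u;r)|, |\mathcal{J}_\gc(u;r)| \lesssim 1$ in terms of $L$, $\rho_\pm$, and $Q$. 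A bounded nondecreasing function admits a finite one-sided limit at $0$, so $\mathcal{M}_\gc(u;0+)$ exists.

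For part~(b), the Lipschitz bound equivalent to~\eqref{vK blowup sequence Lipschitz bound} together with Arzel\`a--Ascoli and Rellich--Kondrachov furnishes a subsequence with $u_m \to u_0$ in $C^\varepsilon_\loc$ and weakly in $H^1_\loc$. The integration-by-parts argument from the proof of Lemma~\ref{blowup limits lemma} adapts to our setting (using $u_m = 0$ on $T$ and harmonicity in $\Omega^-(u_m)$) to upgrade this to strong $H^1_\loc$ convergence. A direct change of variables shows the scale invariance $\mathcal{M}_\gc(u;\sigma r_m) = \mathcal{M}_\gc(u_m;\sigma)$ for every $\sigma>0$, so $\mathcal{M}_\gc(u_m;\sigma) \to \mathcal{M}_\gc(u;0+)$. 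Differentiating this identity in $\sigma$ via the chain rule applied to the derivative formula, and then integrating from $\sigma_1$ to $\sigma_2$, yields after rescaling
\begin{equation*}
  \mathcal{M}_\gc(u;\sigma_2 r_m) - \mathcal{M}_\gc(u;\sigma_1 r_m) = 2\int_{B_{\sigma_2}^- \setminus B_{\sigma_1}^-} \frac{(\nabla u_m \cdot z - u_m)^2}{|z|^4}\,dz - 3 r_m \int_{\sigma_1}^{\sigma_2} \sigma^{-3} \int_{B_\sigma^-} y\rho_m\,dz\,d\sigma.
\end{equation*}
The left-hand side tends to $0$, the second term on the right carries an explicit factor of $r_m$ and vanishes in the limit, and strong $H^1_\loc$ convergence lets me pass to the limit in the first term, forcing $\nabla u_0 \cdot z = u_0$ almost everywhere and hence $u_0$ is $1$-homogeneous. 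Writing $u_0(r,\theta) = rf(\theta)$ on $[\pi,2\pi]$, harmonicity in $\{u_0 < 0\}$, nonpositivity, and the boundary condition $u_0 = 0$ on $T$ translate to $f''+f=0$ on $\{f<0\}$ with $f \leq 0$ and $f(\pi)=f(2\pi)=0$. An elementary ODE argument forces any connected component of $\{f<0\}$ to have length exactly $\pi$, leaving only $f\equiv 0$ or $f(\theta)=\alpha\sin\theta$ with $\alpha>0$. In either case $u_0(x,y) = \alpha y$ with $\alpha \geq 0$.

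For part~(c), the finite-union-of-Lipschitz-curves structure of $\Gamma(u)$ near $0$ gives, after rescaling, a uniform $BV_\loc$ bound on $\{\chi_m^\pm\}$ (scaling preserves Lipschitz constants of the graphs), so BV compactness yields $L^1_\loc$ convergence to $\chi_0^\pm \in \{0,1\}$ with $\chi_0^+ + \chi_0^- = 1$ almost everywhere. If $\alpha > 0$, uniform convergence $u_m \to \alpha y$ forces $u_m < 0$ on every compact subset of $\{y < 0\}$ for large $m$, so $\chi_0^- \equiv 1$ and $\chi_0^+ \equiv 0$. When $\alpha = 0$, I rescale the variational equation~\eqref{definition vK critical point} using $\phi(z) = \tilde\phi(z/r_m)$; the key identity
\begin{equation*}
  \nabla_z \cdot \bigl((y+Q)\tilde\phi(z/r_m)\bigr) = \nabla_{\tilde z} \cdot (\tilde y\tilde\phi)(\tilde z) + \frac{Q}{r_m}(\nabla_{\tilde z}\cdot\tilde\phi)(\tilde z)
\end{equation*}
shows that after dividing the rescaled equation by $r_m$ the $Q$ contribution survives while the $y$ piece acquires an extra $r_m$. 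Passing to the limit $r_m \searrow 0$ with $\nabla u_0 = 0$ leaves
\begin{equation*}
  0 = -Q \int_{\mathbb{R}_{<0}^2} \rho_0\, \nabla\cdot\tilde\phi\,d\tilde z \qquad\text{for all } \tilde\phi \in C_c^1(\mathbb{R}_{<0}^2;\mathbb{R}^2),
\end{equation*}
and since $Q \neq 0$ this forces $\nabla\rho_0 = 0$ distributionally on $\mathbb{R}_{<0}^2$. Because $\rho_+ \neq \rho_-$ and each $\chi_0^\pm$ is $\{0,1\}$-valued, $\rho_0$ can be constant only if one of $\chi_0^+, \chi_0^-$ vanishes identically.

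The principal obstacle I anticipate is the $\alpha = 0$ branch of part~(c): the other steps adapt known arguments from Section~\ref{monotonicity formula section}, but the rescaling-and-division argument in the vanishing-gradient limit relies essentially on $Q \neq 0$, which is precisely what distinguishes the present non-stagnation case from the stagnation setting handled separately.
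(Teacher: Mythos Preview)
Your proof is correct and follows essentially the same approach as the paper: monotonicity plus Lipschitz bounds for (a), the monotonicity formula with the vanishing $r_m$-weighted bulk term to obtain $1$-homogeneity in (b), and BV compactness plus the rescaled variational equation for (c). The only cosmetic difference is your case split in (c) on whether $\alpha > 0$ (handled directly via uniform convergence) or $\alpha = 0$ (via the variational equation), whereas the paper treats both cases at once by observing that the terms $\alpha^2\nabla\cdot\phi - 2\alpha^2\partial_y\phi_2$ integrate to zero for admissible test fields.
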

\begin{proof}
Since $\mathcal{M}^\prime_\gc$ is clearly positive and bounded, the existence of the right-hand limit $\mathcal{M}_\gc(u; \, 0+)$ is immediate.
The argument for part~\ref{vK blowup up part Q not 0} closely follows that of Lemma~\ref{blowup limits lemma}, with the essential point being that the additional bulk integral term in the formula for $\mathcal{M}_\gc^\prime$ does not contribute in the blowup limit. Because the corresponding blowup sequence $\{ u_m \}$ is equi-Lipschitz, passing to a subsequence, we may infer the existence of a locally uniform limit  $u_0 \in H_\loc^1(\mathbb{R}_{\leq 0}^2) \cap \Lip_\loc(\mathbb{R}_{\leq 0}^2)$ that is nonpositive and harmonic on $\Omega^-(u_0)$.

We claim, moreover, that $u_0$ is $1$-homogeneous. Fix $0 < \sigma_1 < \sigma_2$, and note that by Theorem~\ref{gc monotonicity formula theorem},
\begin{align*}
	\mathcal{M}_\gc(u; \, r_m \sigma_2) - \mathcal{M}_\gc(u; \, r_m \sigma_1) & = \int_{\sigma_1}^{\sigma_2} \frac{2}{\sigma^2} \int_{\partial B_\sigma^- \setminus T} \left( \nabla u_m \cdot \nu - \frac{u_m}{\sigma} \right)^2 \, d\mathcal{H}^1 \, d\sigma \\
		& \qquad - r_m \int_{\sigma_1}^{\sigma_2} \frac{3}{\sigma^3} \int_{B_\sigma^-} y \left( \rho_+ \chi_m^+ + \rho_- \chi_m^- \right) \, dx \,dy \, d\sigma.
\end{align*}
Sending $r_m \searrow 0$, the $H^1$ and uniform convergence of $u_m \to u_0$ ensure that
\[
	0 = \int_{\sigma_1}^{\sigma_2} \frac{1}{\sigma^2} \int_{\partial B_\sigma^- \setminus T} \left( \nabla u_m \cdot \nu - \frac{u_m}{\sigma} \right)^2 \, d\mathcal{H}^1 \, d\sigma,
\]
which implies the claimed $1$-homogeneity of $u_0$. But, now the fact that $u_0$ is harmonic on $\Omega^-(u_0)$ $1$-homogeneous, and vanishes on $\{ y = 0\}$ forces it to be a nonnegative multiple of $y$.  This completes the proof of part~\ref{vK blowup up part Q not 0}.

Next consider part~\ref{vK blowup domain part Q not 0}. The sequences $\{\chi_m^+\}$ and $\{ \chi_m^-\}$  are uniformly $BV_\loc$ as a consequence of the Lipschitz continuity of $\Gamma(u)$ near $0$. The existence of the blowup limits $\chi_0^+$ and $\chi_0^-$ then follows from the same argument as in Lemmas~ \ref{limit of chi lemma}. Moreover, from the definition of a variational solution \eqref{definition vK critical point}, the scaling in~\eqref{vK blowup sequence}, and the fact that $u_m \to \alpha y$ in $H_\loc^1 \cap C_\loc^\varepsilon$ for each $\varepsilon \in (0,1)$, we find that
\[
	0 = \int_{\{ y < 0 \}} \left(\alpha^2 \nabla \cdot \phi - 2 \alpha^2 \partial_y \phi_2 + \left(  \rho_+ \chi_0^+ + \rho_- \chi_0^- \right) \nabla\cdot (Q \phi) \right) \, dx \,dy
\]
for all vector fields $\phi \in C_c^1(\mathbb{R}_{\leq 0} ^2; \mathbb{R}^2)$ with $\phi \cdot \nu = 0$ on $\{ y = 0 \}$. Note that the first two terms in the integrand vanish, and so this implies that $\rho_+ \chi_0^+ + \rho_- \chi_0^-$ is constant. 
\end{proof}

\begin{remark}
\label{density Q not 0 remark}
In fact, the value of $\mathcal{M}_\gc(u; \, 0+)$ in Lemma~\ref{vK blowup lemma Q not 0} uniquely determines the blowup domain. Fix any sequence $r_m \searrow 0$ so that $u_m \to \alpha y$ locally uniformly and $\chi_m^\pm \to \chi_0^\pm$ in $L^1_\loc$. We compute directly that
\begin{align*}
	\mathcal{M}_\gc(u; \, r_m) = \mathcal{M}_\gc(u_m; \, 1) & = \int_{B_1^-} \left( |\nabla u_m|^2 - Q \left( \rho_+ \chi_m^+ + \rho_- \chi_m^- \right) \right) \, dx \, dy - \int_{\partial B_1^-} u_m^2 \, d\mathcal{H}^1 \\ 
		& \longrightarrow   \int_{B_1^-} \left( \alpha^2 - Q \left( \rho_+ \chi_0^+ + \rho_- \chi_0^- \right) \right)  \, dx \, dy - \int_{\partial B_1^-} \alpha^2 y^2 \, d\mathcal{H}^1 \\
		& = - Q \int_{B_1^-} \left( \rho_+ \chi_0^+ + \rho_- \chi_0^- \right)   \, dx \, dy
\end{align*} 
as $r_m \searrow 0$. In light of Lemma~\ref{vK blowup lemma Q not 0}~\ref{vK blowup domain part Q not 0}, it follows that $\mathcal{M}_\gc(u; \, 0+) = -Q \rho_+ \pi/2$ or $\mathcal{M}_\gc(u; \, 0+) = - Q \rho_- \pi/2$, depending precisely on which phase has full density at the origin. Since these are distinct, the blowup domain is uniquely determined by the value of $\mathcal{M}_\gc(u; \, 0+)$.
\end{remark}

The situation in the stagnation point setting where $Q = 0$ is similar to what we saw in Section~\ref{lower bound section}, as without assuming monotonicity, there are several potential nontrivial blowup limits.

\begin{lemma}[Stagnation blowup limit] \label{vK monotonicity lemma}
Suppose that $u$ is a variational solution to the free boundary elliptic problem~\eqref{vK elliptic PDE} with $Q = 0$ and satisfying \eqref{Lipschitz constant decay}. Assume that, in a neighborhood of $0$, the free boundary $\Gamma(u)$ is a union of finitely many Lipschitz curves.
\begin{enumerate}[label=\rm(\alph*\rm)]
	\item \textup{(Density)} The right-hand side limit $\mathcal{M}_\gc(u; \, 0+)$ exists and is finite.
	\item \textup{(Blowup $u$)} For a sequence $r_m \searrow 0$, consider the blowup sequence $\{u_m\}$ defined by \eqref{vK blowup sequence}. Perhaps passing to a subsequence, 
	\[
		u_m \xrightarrow{H_\loc^1 \cap C_\loc^\varepsilon} u_0 \qquad \textrm{for all } \varepsilon \in (0,1),
	\]
	where $u_0 \in H_\loc^1(\mathbb{R}_{\leq 0}^2) \cap \Lip_\loc(\mathbb{R}_{\leq 0}^2)$ is a nonpositive $3/2$-homogeneous function that is harmonic on $\Omega^-(u_0)$. Either
  \begin{equation}
    \label{vK blowup u}
    u_0 = 0, \quad u_0 = \Stokesu, \quad u_0 = \Stokesu \circ A_{\tfrac{\pi}{6}}, \quad \textrm{or} \quad u_0 = \Stokesu \circ A_{\tfrac{\pi}{6}}^{-1}.
  \end{equation}
	
	\item \textup{(Blowup domain)} \label{vK blowup domain part} 
	For $u_m$ and $u_0$ be given as in the previous part, define $\chi_m^\pm \colonequals \chi^\pm(u_m)$. Then there exists $\chi_0^+, \chi_0^- \in BV_\loc(\mathbb{R}_{\leq 0}^2)$ such that $\chi_m^\pm \to \chi_0^\pm$ in $L_\loc^1(\mathbb{R}_{\leq 0}^2)$. Moreover,
	\[
		\chi_0^- = 1,~\chi_0^+= 0 \quad \textrm{ on } \Omega^-(u_0), \qquad \chi_0^+ + \chi_0^- = 1,~\chi_0^+ \chi_0^- = 0 \quad \textrm{ on } \mathbb{R}_{\leq 0}^2,
	\]
	and
	\[
		\rho_+ \chi_0^+ + \rho_- \chi_0^- \quad \textrm{is constant on each connected component of } \{ u_0 = 0\}.
	\]
\end{enumerate}
\end{lemma}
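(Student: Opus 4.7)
The plan is to mirror the proof of Lemmas~\ref{blowup limits lemma} and~\ref{limit of chi lemma} for the internal wave case, adapting each step to the half-ball geometry and the Dirichlet trace on $T$. For part~(a), the formula in Theorem~\ref{gc monotonicity formula theorem} for $Q = 0$ is manifestly nonnegative, so $\mathcal{M}_\gc(u;\,\cdot)$ is nondecreasing; the Lipschitz decay~\eqref{Lipschitz constant decay} combined with~\eqref{vK grad u bound} bounds $\mathcal{I}_\gc$ and $\mathcal{J}_\gc$, and thus $\mathcal{M}_\gc(u;\,0+)$ exists and is finite. For the first half of part~(b), the equi-Lipschitz bound~\eqref{vK blowup sequence Lipschitz bound} extracts a subsequence converging in $C^\varepsilon_\loc$ to some $u_0$, which is nonpositive, harmonic on $\Omega^-(u_0)$, and vanishes on $T$. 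Strong $H^1_\loc$ convergence follows from the same cutoff integration-by-parts argument as in Lemma~\ref{blowup limits lemma}. Integrating the monotonicity identity of Theorem~\ref{gc monotonicity formula theorem} over an annular region $B_{r_m\sigma_2}^- \setminus B_{r_m\sigma_1}^-$ and passing to the limit yields
\[
\int_{B_{\sigma_2}^- \setminus B_{\sigma_1}^-} \frac{1}{|z|^5}\Bigl( z \cdot \nabla u_0 - \tfrac{3}{2} u_0\Bigr)^2 \, dx\, dy = 0,
\]
so $u_0$ is $3/2$-homogeneous.

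The classification in~\eqref{vK blowup u} is the heart of the argument. By homogeneity, each connected component $\mathcal{K}$ of $\Omega^-(u_0)$ is an open cone in $\mathbb{R}^2_{\leq 0}$ with apex at $0$. Separation of variables combined with harmonicity, $3/2$-homogeneity, and the Dirichlet condition on $\partial\mathcal{K}$ forces $u_0|_{\mathcal{K}} = C r^{3/2} \sin\bigl(\tfrac{3}{2}(\theta - \theta_1)\bigr)$ on $\mathcal{K} = \{r>0,\, \theta \in (\theta_1,\theta_2)\}$ with angular aperture $\theta_2 - \theta_1 = 2\pi/3$. Since $\mathcal{K} \subset \mathbb{R}^2_{\leq 0}$ has aperture at most $\pi$, at most one such component exists. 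For each side $\theta = \theta_i$ lying strictly in the interior of the lower half-plane, testing~\eqref{definition vK critical point} with a vector field compactly supported away from $T$ and normal to that ray (in the manner of~\eqref{u0 boundary condition}) yields the Bernoulli matching $9C^2/4 = \jump{\rho}\sin\theta_i$. Combined with $\theta_2 - \theta_1 = 2\pi/3$, this admits exactly three configurations: $\theta_1 = \pi$, $\theta_2 = 5\pi/3$ (one side on $T$); $\theta_1 = 4\pi/3$, $\theta_2 = 2\pi$ (mirror image); or both sides strictly interior with $\sin\theta_1 = \sin\theta_2$ forcing $\theta_1 = 7\pi/6$, $\theta_2 = 11\pi/6$ (symmetric Stokes corner). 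These correspond to $u_0 = \Stokesu \circ A_{\pi/6}$, $u_0 = \Stokesu \circ A_{\pi/6}^{-1}$, and $u_0 = \Stokesu$ respectively, and the trivial case $\Omega^-(u_0) = \emptyset$ gives $u_0 = 0$, exhausting the four possibilities in~\eqref{vK blowup u}. I expect this enumeration to be the main technical obstacle, as care is needed to derive the Bernoulli matching from the variational formulation uniformly up to $T$.

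For part~(c), the assumption that $\Gamma(u)$ is locally a finite union of Lipschitz curves near $0$ gives a uniform $BV_\loc$ bound on $\{\chi_m^\pm\}$, so compactness of $BV \hookrightarrow L^1_\loc$ produces subsequential limits $\chi_0^\pm$ valued in $\{0,1\}$. Passing $\chi_m^+ + \chi_m^- = 1$ to the limit yields $\chi_0^+ + \chi_0^- = 1$ and $\chi_0^+ \chi_0^- = 0$ on $\mathbb{R}^2_{\leq 0}$, while locally uniform convergence $u_m \to u_0$ identifies $\chi_0^- = 1$ and $\chi_0^+ = 0$ on $\Omega^-(u_0)$. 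Finally, sending $m \to \infty$ in~\eqref{definition vK critical point} with a test field supported in a single connected component of $\{u_0 = 0\}$, where $\nabla u_0$ vanishes, yields the desired constancy of $\rho_+ \chi_0^+ + \rho_- \chi_0^-$.
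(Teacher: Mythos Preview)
Your proposal is correct and follows essentially the same approach as the paper, which simply states that the argument is the same as in Lemmas~\ref{blowup limits lemma} and~\ref{limit of chi lemma}; you have spelled out those adaptations in detail, and your enumeration of the four possible blowups via the $120^\circ$ aperture constraint and the Bernoulli matching on interior rays is exactly the half-plane analogue of the case analysis in Lemma~\ref{limit of chi lemma}.

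One minor logical reordering is needed: your Bernoulli matching step in part~(b) tests the \emph{limiting} variational identity for $u_0$, which requires first passing to the limit in~\eqref{definition vK critical point}. That limit in turn needs the $L^1_\loc$ convergence $\chi_m^\pm \to \chi_0^\pm$ that you establish only in part~(c). In the paper's proof of Lemma~\ref{limit of chi lemma} this is handled by extracting the $\chi$ limits first (via the uniform $BV$ bound from the Lipschitz free boundary), deriving the limiting variational equation~\eqref{u0 variational equation}, and only then carrying out the classification. You should reorganize in the same way: move the $BV$ compactness argument from part~(c) ahead of the classification, so that the limiting domain-variation identity is available when you derive the Bernoulli matching~\eqref{u0 boundary condition} on the interior rays of $\Omega^-(u_0)$.
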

\begin{proof}
The existence of the right-sided limit $\mathcal{M}_\gc(u; \, 0+)$ follows from the monotonicity of $\mathcal{M}_\gc(u; \, \placeholder)$ established in Theorem~\ref{gc monotonicity formula theorem} and the boundedness of $\mathcal{M}_\gc$ due to~\eqref{vK blowup sequence Lipschitz bound}. By hypothesis and~\eqref{vK grad u bound}, the blowup sequence $\{ u_m \}$ is uniformly bounded in $H_\loc^1$ and $\Lip_\loc$. The corresponding characteristic functions $\chi_m^\pm$ are uniformly $BV_\loc$ as a consequence of the Lipschitz continuity of $\Gamma(u)$ near $0$. The existence of the blowup limit $u_0$ and $\chi_0^\pm$ then follows from the same argument as in Lemmas~\ref{blowup limits lemma} and \ref{limit of chi lemma}.  
\end{proof}

The next lemma refines the characterization of the blowup limit in the stagnation case by considering the value of the density $\mathcal{M}_\gc(u; \, 0+)$. Importantly, it shows that the blowup is uniquely determined by the density.

\begin{figure}
	\centering
	\includegraphics[scale=1]{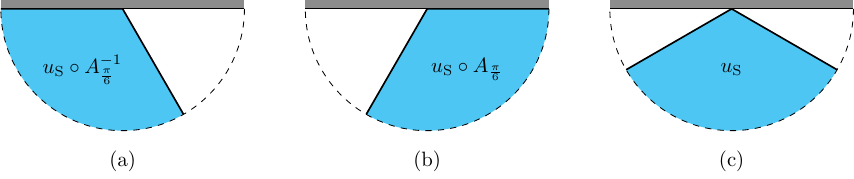}
	\caption{Possible nonzero blowup $u_0$ for the gravity current problem with $Q = 0$. The region $\{ u_0 < 0\}$ is shaded in blue, while the region $\inter\{u_0 = 0\}$ is white. Both (a) and (b) correspond to alternative~\ref{vK corner alternative}, while (c) represents alternative~\ref{vK Stokes corner alternative}.}
\end{figure}
\begin{lemma}[Density with stagnation point]
\label{vK density lemma}
Let $u$ be a variational solution to~\eqref{vK elliptic PDE} satisfying the hypotheses of Lemma~\ref{vK monotonicity lemma}. There are four mutually exclusive alternatives for any blowup limit:
\begin{enumerate}[label=\rm(\rm A\arabic*\rm)]
	 	\item \label{vK all heavier alternative} heavier fluid has full density,
		\[
			\mathcal{M}_\gc(u; \, 0+) = \frac{2 \rho_-}{3}, \quad u_0 = 0, \quad \chi_0^+ \equiv 0, \quad \chi_0^- \equiv 1;
		\]
		\item \label{vK all lighter alternative} lighter fluid has full density,
		\[
			\mathcal{M}_\gc(u; \, 0+) = \frac{2\rho_+}{3}, \quad u_0 = 0 \quad \chi_0^+ \equiv 1, \quad \chi_0^- \equiv 0;
		\]
		\item \label{vK Stokes corner alternative} Stokes corner flow,
		\[
			\mathcal{M}_\gc(u; \, 0+) = \left( \frac{2}{3} - \frac{1}{\sqrt{3}} \right) \rho_+ + \frac{\rho_-}{\sqrt{3}}, \quad u_0 = \Stokesu,\quad \chi_0^\pm = \chi^\pm(\Stokesu); \quad \textrm{or}
		\]
		\item \label{vK corner alternative} Stokes corner flow rotated $30\degree$ clockwise or counterclockwise,
		\[
			\mathcal{M}_\gc(u; \, 0+) =  \frac{\rho_+}{6} +\frac{\rho_-}{2}, \quad u_0 = \Stokesu \circ A_{\pi/6} \textrm{ or }\Stokesu \circ A_{\pi/6}^{-1}, \quad \chi_0^\pm = \chi^\pm(u_0).  \]
	\end{enumerate}
	Moreover, if in a neighborhood of $0$, $\Omega^+(u)$ is connected, then alternative~\ref{vK Stokes corner alternative} cannot occur.
\end{lemma}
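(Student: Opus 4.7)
The plan is to combine the blowup classification from Lemma~\ref{vK monotonicity lemma} with a direct computation of $\mathcal{M}_\gc(u;0+)$ for each limit, and then rule out alternative~\ref{vK Stokes corner alternative} in the connected case by a topological argument. The key observation that makes the density computation clean is the cancellation between the gradient energy and the boundary term in $\mathcal{M}_\gc$ that holds for any $3/2$-homogeneous harmonic function.

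I would first pin down $\chi_0^\pm$ for each candidate $u_0$ from \eqref{vK blowup u}, using the constraints in Lemma~\ref{vK monotonicity lemma}\ref{vK blowup domain part}: $\chi_0^-\equiv 1$ on $\Omega^-(u_0)$, the pair is complementary and $\{0,1\}$-valued, and $\rho_+\chi_0^+ + \rho_-\chi_0^-$ is constant on each connected component of $\{u_0=0\}$. Observing that $\{u_0=0\}$ is in fact connected for every candidate (for $u_0=0$ it is all of $\mathbb{R}^2_{\leq 0}$; for the Stokes corners it consists of the outer wedges joined through $T\subset\{y=0\}$), the density $\rho_+\chi_0^+ + \rho_-\chi_0^-$ is constant on $\{u_0=0\}$. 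When $u_0=0$ this leaves the two choices producing alternatives~\ref{vK all heavier alternative} and \ref{vK all lighter alternative}. When $u_0$ is a Stokes corner, testing the variational identity \eqref{definition vK critical point} with a vector field $\phi$ whose support crosses $\partial\Omega^-(u_0)\setminus\{0\}$ produces a nonzero boundary term from $|\nabla u_0|^2 \phi\cdot\nu$; this requires a corresponding jump in $\rho_+\chi_0^+ + \rho_-\chi_0^-$ of magnitude $\rho_- - \rho_+$, forcing $\chi_0^+\equiv 1$ on the complement of $\mathrm{supp}(u_0)$, giving alternatives~\ref{vK Stokes corner alternative} and \ref{vK corner alternative}.

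For the density itself, integration by parts using $3/2$-homogeneity of $u_0$ and harmonicity on $\Omega^-(u_0)$ gives $\int_{B_1^-}|\nabla u_0|^2\,dx\,dy = \tfrac{3}{2}\int_{\partial B_1^-}u_0^2\,d\mathcal{H}^1$, so the kinetic and boundary terms in $\mathcal{I}_\gc - \tfrac{3}{2}\mathcal{J}_\gc$ cancel exactly. Passing to the limit in $\mathcal{M}_\gc(u;r_m) = \mathcal{M}_\gc(u_m;1)$ using the $H^1_\loc \cap C^\varepsilon_\loc$ and $L^1_\loc$ convergences from Lemma~\ref{vK monotonicity lemma}, I obtain
\[
  \mathcal{M}_\gc(u;0+) = -\int_{B_1^-} y(\rho_+\chi_0^+ + \rho_-\chi_0^-)\,dx\,dy.
\]
Elementary polar integration ($\int_{B_1^-}(-y)\,dx\,dy = \tfrac{2}{3}$, $\int_{\mathrm{supp}(\Stokesu)\cap B_1^-}(-y)\,dx\,dy = \tfrac{1}{\sqrt{3}}$ from the angular extent $(-\tfrac{5\pi}{6},-\tfrac{\pi}{6})$, and $\int_{\mathrm{supp}(\Stokesu\circ A_{\pi/6})\cap B_1^-}(-y)\,dx\,dy = \tfrac{1}{2}$ from the angular extent $(-\pi,-\tfrac{\pi}{3})$) recovers the four stated values. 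Mutual exclusivity then follows: $\mathcal{M}_\gc(u;0+)$ is a single well-defined number by the monotonicity from Theorem~\ref{gc monotonicity formula theorem}, and the four values are pairwise distinct affine forms in $(\rho_+,\rho_-)$ whenever the densities are not both zero, so only one alternative is realized.

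Finally, suppose toward contradiction that $u_0 = \Stokesu$ is a blowup limit of some $u$ for which $\Omega^+(u)\cap B_\delta$ is connected. The $L^1_\loc$ convergence $\chi^+(u_m)\to \chi^+(\Stokesu)$ shows that, after rescaling, the stagnant phase fills almost all of the two disjoint wedges $W_\pm\subset B_1^-$ at angles $\theta\in(-\tfrac{\pi}{6},0)$ and $(-\pi,-\tfrac{5\pi}{6})$ respectively, and disappears on the middle cone. Using the finite-union-of-Lipschitz-curves hypothesis on $\Gamma(u)$ together with Hausdorff convergence of the rescaled free boundaries $r_m^{-1}(\Gamma(u)\cap B_{r_m})\to\Gamma(\Stokesu)$ on compact sets away from the origin, one deduces that two Lipschitz arcs of $\Gamma(u)\cap B_\delta$ emanate from $0$ with tangent directions $\theta=-\tfrac{\pi}{6}$ and $\theta=-\tfrac{5\pi}{6}$. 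These two arcs, together with $T$, locally partition $B_\delta^-$ into three regions, with the central one (containing the negative $y$-axis) matching $\Omega^-(u)$ and the two outer ones matching $\Omega^+(u)$. The two outer regions meet only at the origin, which lies on $T$ and hence is not in the open set $\Omega^+(u)$, forcing $\Omega^+(u)\cap B_\delta$ to have at least two components and contradicting connectedness. The main obstacle is precisely this last step: translating the weak $L^1_\loc/C^\varepsilon_\loc$ convergence into a concrete geometric statement about the Lipschitz tangent structure of $\Gamma(u)$ at $0$, and ruling out a vanishingly thin ``bridge'' of $\Omega^+(u)$ that might traverse $\mathrm{supp}(\Stokesu)$ at small scales and locally re-establish connectedness.
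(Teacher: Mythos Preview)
Your computation of the density and determination of $\chi_0^\pm$ match the paper's almost exactly: both obtain $\mathcal{M}_\gc(u;0+) = -\int_{B_1^-} y(\rho_+\chi_0^+ + \rho_-\chi_0^-)\,dx\,dy$ from the $3/2$-homogeneity cancellation (the paper reaches it via the integration-by-parts identity~\eqref{integration by parts identity} rather than by pairing the energy with the boundary term directly, but this is cosmetic), and both fix $\chi_0^\pm$ on $\{u_0=0\}$ by reading off the jump across $\partial\Omega^-(u_0)\setminus\{0\}$ in the limiting variational identity. One small correction: the four density values are pairwise distinct precisely when $\rho_+\neq\rho_-$, not merely when they are not both zero; this is guaranteed by the standing hypotheses on the model problem.

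Where the paper diverges from you is in excluding alternative~\ref{vK Stokes corner alternative} under connectedness of $\Omega^+(u)$, and its route is considerably simpler. Rather than attempting to establish tangent directions for $\Gamma(u)$ via Hausdorff convergence of rescaled free boundaries, the paper works only with the values of $u$ on the negative $y$-axis. Since the density values are pairwise distinct, once alternative~\ref{vK Stokes corner alternative} holds \emph{every} blowup limit equals $\Stokesu$, so $u(r\,\cdot)/r^{3/2}\to\Stokesu$ locally uniformly as $r\to 0$ along the full limit. Because $\Stokesu(0,-1/2)<0$, this forces $u(0,y)<0$ for all $y\in(-\delta,0)$ and some $\delta>0$. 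That segment lies in $\Omega^-(u)$, so $\Omega^+(u)\cap B_\delta^-$ does not meet the $y$-axis and therefore splits into its intersections with $\{x>0\}$ and $\{x<0\}$; both are open and nonempty since $\chi_0^+\equiv 1$ on each outer $30^\circ$ wedge. This yields the contradiction without any appeal to Lipschitz tangent structure, and it dispatches your ``thin bridge'' worry directly: no bridge in $\Omega^+(u)$ can cross a segment on which $u$ is strictly negative.
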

\begin{proof}
  Let $u$ be given as above. To evaluate $\mathcal{M}_\gc(u;\, 0+)$, take any sequence $r_m \searrow 0$ along which the corresponding blowup sequence $\{ u_m\}$ defined via~\eqref{blowup sequence} converges, and consider
  \begin{align*}
    \mathcal{M}_\gc(u; \, r_m) & = \frac{1}{r_m^3} \int_{B_{r_m}^-} \left( |\nabla u|^2 -  \left( \rho_+ \chi^+(u) + \rho_- \chi^-(u) \right) y \right)  \, dx \, dy - \frac{3}{2} \frac{1}{r_m^4} \int_{\partial B_{r_m}^-} u^2 \, d\mathcal{H}^1 \\
    & = \frac{1}{r_m} \int_{B_1^-} \left( |\nabla u_m|^2 -  \left( \rho_+ \chi_m^+ + \rho_- \chi_m^- \right) r_m y \right)  \, dx \, dy - \frac{3}{2} \frac{1}{r_m^2} \int_{\partial B_{1}^-} u_m^2 \, d\mathcal{H}^1  \\
    & = \frac{1}{r_m} \int_{\partial B_1^-} \left(  \nabla u_m \cdot \nu - \frac{3}{2}\frac{u_m}{r_m} \right) u_m \, d \mathcal{H}^1 - \int_{B_1^-}  \left( \rho_+ \chi_m^+ + \rho_- \chi_m^- \right) y \, dx \, dy,
  \end{align*}
  where in the last line we used the integration by parts identity~\eqref{integration by parts identity}. Sending $r_m \searrow 0$, and recalling the $H^1_\loc \cap C^\varepsilon$ convergence of $u_m \to u_0$, the $L^1_\loc$ convergence of $\chi_m^\pm \to \chi_0^\pm$, and the $3/2$-homogeneity of $u_0$ ensured by Lemma~\ref{vK monotonicity lemma}, we arrive at
  \begin{equation}
    \label{vK density formula}
    \mathcal{M}_\gc(u; \, 0+) = -  \int_{B_1^-} y \left( \rho_+ \chi_0^+ + \rho_- \chi_0^- \right)  \, dx \, dy.
  \end{equation}

  Consider now the four possibilities for $u_0$ in \eqref{vK blowup u}. If $u_0 = 0$, then from Lemma~\ref{vK monotonicity lemma}\ref{vK blowup domain part}, it follows that precisely one of $\chi_0^+$ and $\chi_0^-$ vanishes identically while the other takes on the constant value $1$. Thanks to the formula~\eqref{vK density formula}, we see that these correspond to cases~\ref{vK all heavier alternative} and \ref{vK all lighter alternative}. On the other hand, if $u_0 \neq 0$, then we have that $\chi_0^- = 1$ and $\chi_0^+ = 0$ on $\supp{\Stokesu}$. As in the proof of Lemma~\ref{limit of chi lemma}, we find that 
  \begin{equation}
    \label{vK u0 variational equation}
    0 = \int_{\mathbb{R}^2} \left(|\nabla u_0|^2 \nabla \cdot \phi - 2 D\phi[ \nabla u_0, \nabla u_0] + \left(  \rho_+ \chi_0^+ + \rho_- \chi_0^- \right)\nabla\cdot (y \phi) \right) \, dx \,dy.
  \end{equation}
  Since $u_0 = \Stokesu$, $\Stokesu \circ A_{\pi/6}$, or $\Stokesu \circ A_{\pi/6}^{-1}$, the set $\Omega^-(u)$ has exactly one connected component, and the outward unit normal along $\partial \Omega^-(u) \setminus \{ 0\}$ is locally constant. Reprising the argument in Lemma~\ref{limit of chi lemma}, we infer that $\rho \chi_0$ has a nonzero jump across the boundary, whence $\chi_0^\pm = \chi^\pm(u_0)$. Together, with~\eqref{vK density formula}, this gives the remaining cases~\ref{vK Stokes corner alternative} and \ref{vK corner alternative}. Note that because each of the values of the density are necessarily distinct, the blowup limits $u_0$ and $\chi_0^\pm$ are uniquely determined by $\mathcal{M}_\gc(u; \, 0+)$.

  Finally, assume that $\Omega^+(u)$ is connected in a neighborhood of $0$ and, seeking a contradiction, suppose that alternative~\ref{vK Stokes corner alternative} occurs. Let $r_m \searrow 0$ be given so that the corresponding blowup sequence $\{ u_m \}$ converges to $u_0 = \Stokesu$ and likewise $\chi_m^\pm$ converges to $\chi_0^\pm = \chi^\pm(\Stokesu)$. Since $u_0$ is strictly negative on the ball $B_r(0,-1/2)$ for $0 < r \ll 1$, the uniform convergence of $u_m \to u$ implies that each $u_m $ is strictly negative on the segment $\{ 0 \} \times (-1/2,0)$. But then we can partition $\Omega^+(u) \setminus \{0\}$ into the subset that lies strictly to the left of the $y$-axis and the subset that lies strictly to the right of the $y$-axis. Both of these are nonempty, as $\chi_0^+ = 1$ outside the support of $\Stokesu$, and open sets. Thus we have a contradiction to the connectedness of $\Omega^+(u) \setminus \{0\}$ near $0$.
\end{proof}

\subsection{The gravity current limit and von Kármán's conjecture}
\label{von Karman section}

We are now prepared to prove the main theorem on the gravity current limit along $\cmd$. It was already established in Theorem~\ref{limiting gravity current theorem} that, were overturning not to occur, then there exists a limiting variational solution to the model problem~\eqref{vK elliptic PDE} that is monotone. What remains is to apply the results of the previous subsection to resolve the contact angle for this liming solution at the point where the free surface meets the upper wall. As this is a question of independent interest, we make the effort to carry out the analysis in a more general setting; in particular, we do not assume monotonicity of $u$. This gives a partial proof of von Kármán's conjecture.

\begin{figure}
	\includegraphics{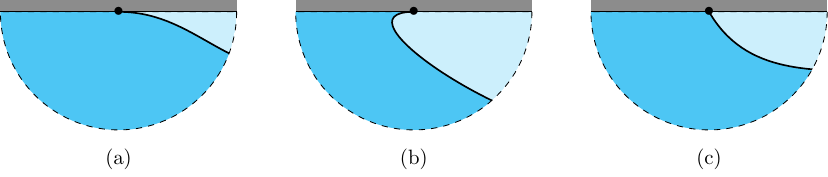}
	\caption{Possible contact angles in Theorem~\ref{contact angle theorem} assuming for definiteness that $\Omega^-(u)$, shaded darker blue, lies to the left of the free boundary and the  $\Omega^+(u)$, shaded in lighter blue, lies to the right. If $Q \neq 0$, then $\Gamma(u)$ is tangent to the upper wall, corresponding to either (a) or (b). If $Q = 0$, then either there is a cusp (b) or the contact angle is $60\degree$ as in (c).}  
\end{figure}

\begin{theorem}[Contact angle]
\label{contact angle theorem}
Let $(u,Q)$ be a variational solution to the gravity current problem~\eqref{vK elliptic PDE} satisfying~\eqref{Lipschitz constant decay}. Suppose that in a neighborhood of $0$, the free boundary $\Gamma(u)$ is a continuous injective curve with the parameterization 
\[
	\Sigma = (\Sigma_1(t), \Sigma_2(t)) \colon [0,1] \to B_1^- \cup T \quad \textrm{and} \quad \Sigma(0) = 0.
\]
Without loss of generality, assume that the region $\Omega^\pm(u)$ lies to the right of $\Gamma(u)$ near $0$.
\begin{enumerate}[label=\rm(\alph*)]
	\item \textup{(No stagnation)} \label{contact angle Q not 0 part} If $Q \neq 0$, then $\Gamma(u)$ is tangent to the wall at $0$ in that
	\[
		\lim_{t\searrow 0} \frac{\Sigma_2(t)}{\Sigma_1(t)} = 0.
	\]
	
	\item \textup{(Stagnation)} \label{contact angle Q is 0 part} If $Q = 0$, then either 
    \begin{equation}
      \label{contact angles Q is 0}
      \lim_{t\searrow 0} \frac{\Sigma_2(t)}{\Sigma_1(t)} = 0 \quad \textrm{and} \quad \pm\Sigma_1(t) < 0 \textrm{ for } 0 < t \ll1  \qquad \textrm{or} \qquad \lim_{t\searrow 0} \frac{\Sigma_2(t)}{\Sigma_1(t)} = \pm \sqrt{3}.
    \end{equation}
\end{enumerate}
\end{theorem}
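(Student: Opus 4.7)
The plan is to apply the blowup analyses developed in Section~\ref{gc monotonicity formula section}---namely Lemmas~\ref{vK blowup lemma Q not 0}, \ref{vK monotonicity lemma}, and \ref{vK density lemma}---to characterize the behavior of $\Sigma$ at the origin. I would argue by contradiction: supposing $\Sigma_2(t)/\Sigma_1(t)$ fails to converge to the conjectured limit, extract a subsequence $t_m \searrow 0$ along which the ratio is bounded away from that limit, set $r_m \colonequals 2|\Sigma(t_m)|$, and form the appropriate blowup $u_m$ given by~\eqref{vK blowup sequence}. The scaled point $\zeta_m \colonequals \Sigma(t_m)/r_m$ then lies on $\partial B_{1/2}^- \setminus T$ and satisfies $u_m(\zeta_m) = 0$ since $\zeta_m \in \Gamma(u_m)$. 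Using~\eqref{Lipschitz constant decay} and the cited lemmas, a further subsequence has $\zeta_m \to \zeta_* \in \overline{B_{1/2}^-} \setminus T$ and $u_m \to u_0$ with explicit structure.

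For part~\ref{contact angle Q not 0 part}, Lemma~\ref{vK blowup lemma Q not 0} yields $u_0 = \alpha y$ with $\alpha \geq 0$, together with $\chi_m^\pm \to \chi_0^\pm$ constant in $L^1_\loc$ and $\chi_0^+ + \chi_0^- = 1$. If $\alpha > 0$, passing to the limit in $u_m(\zeta_m) = 0$ gives $\alpha y_* = 0$, contradicting $y_* < 0$. If instead $\alpha = 0$, then one of $\chi_0^\pm$ vanishes identically, forcing the corresponding measure $|\Omega^\mp(u_m) \cap B_1^-| \to 0$; however, because $\Gamma(u_m)$ runs from the origin through the interior point $\zeta_m$ at depth bounded below, and $\Omega^+(u)$, $\Omega^-(u)$ are locally the two regions bounded between $\Gamma(u)$ and segments of $T$ or $\partial B_1^-$, both $\Omega^+(u_m) \cap B_{1/2}^-$ and $\Omega^-(u_m) \cap B_{1/2}^-$ must retain nontrivial area, a contradiction. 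Hence $\Sigma_2(t)/\Sigma_1(t) \to 0$.

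For part~\ref{contact angle Q is 0 part}, Lemma~\ref{vK monotonicity lemma} produces $u_0 \in \{0, \Stokesu, \Stokesu \circ A_{\pi/6}, \Stokesu \circ A_{\pi/6}^{-1}\}$, and Lemma~\ref{vK density lemma} identifies $u_0$ uniquely through $\mathcal{M}_\gc(u;0+)$, so the blowup is subsequence-independent. Alternative~(A3) is ruled out because $\Gamma(\Stokesu)$ has two symmetric components, incompatible with the connectedness of $\Omega^+(u)$ near the origin implied by the assumption that $\Gamma(u)$ is a single continuous injective curve. In the rotated Stokes cases~(A4), one computes directly that $\Gamma(u_0) \cap B_1^-$ is a single ray from the origin of slope $\mp\sqrt{3}$; the $L^1_\loc$ convergence $\chi^\pm_m \to \chi^\pm(u_0)$ together with non-degeneracy of $u_0$ across this ray promotes convergence to the Hausdorff sense on compact subsets of $B_1^-$, forcing $\zeta_* \in \Gamma(u_0)$ and hence $\Sigma_2(t_m)/\Sigma_1(t_m) \to \mp\sqrt{3}$. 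The vanishing alternatives~(A1)--(A2) reduce to the area-squeezing step from part~\ref{contact angle Q not 0 part}, which yields tangency of $\Sigma$ to $T$; the side $\{\pm\Sigma_1 < 0\}$ is determined by which of $\chi_0^\pm$ equals $1$, matching~\eqref{contact angles Q is 0}.

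The main obstacle is making the area-squeezing argument rigorous when $\Sigma$ is only assumed to be a continuous injective curve. Specifically, one must show that an asymptotic imbalance $|\Omega^\pm(u_m) \cap B_{1/2}^-| \to 0$ is incompatible with $\Gamma(u_m)$ carrying an interior point $\zeta_m$ at depth bounded below. This can be established by a local topological argument exploiting that $\Sigma$ is connected and starts at the origin, thus separating $B_{1/2}^-$ into two regions whose areas are each controlled from below by the geometry of the curve. A secondary technicality is verifying the claimed Hausdorff convergence of $\Gamma(u_m)$ in the rotated Stokes cases; this follows from standard free boundary arguments away from the origin using the non-degeneracy of $\Stokesu \circ A_{\pi/6}^{\pm 1}$ there.
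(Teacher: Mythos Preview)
Your overall strategy---reduce to blowup limits via Lemmas~\ref{vK blowup lemma Q not 0}--\ref{vK density lemma} and derive a contradiction from a free boundary point $\zeta_m$ persisting at positive depth---matches the paper's, but the mechanism you invoke to produce the contradiction is different and, as stated, does not go through.

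The essential gap is the area-squeezing step. A continuous injective curve from the origin that reaches a point $\zeta_m$ at depth $\gtrsim 1$ does \emph{not} force both complementary regions in $B_{1/2}^-$ to have area bounded below uniformly in $m$: think of a thin ``tongue'' of $\Omega^+(u_m)$ of width $\varepsilon_m\to 0$ reaching down to $\zeta_m$, whose boundary is the rescaled $\Sigma$. Then $|\Omega^+(u_m)\cap B_1^-|\to 0$, $\chi_m^+\to 0$ in $L^1_\loc$, $u_m\to 0$ uniformly, and yet $\zeta_m$ sits on $\Gamma(u_m)$ at fixed depth---no contradiction. Your proposed ``local topological argument'' cannot repair this, because area is simply not controlled from below by connectivity and depth alone. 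The paper sidesteps the issue entirely: instead of area it tracks the measure $\Delta u_m = |\jump{\rho}(r_m y+Q)|^{1/2}\, d\mathcal H^1\resmes\Gamma(u_m)$, which has mass $\gtrsim 1$ on any ball $\tilde B$ around $\zeta_m$ (since $\Gamma(u_m)$ must traverse $\tilde B$ and the density is bounded below there), while $\Delta u_m\to \Delta u_0$ weakly and $\Delta u_0$ vanishes on the relevant cone. This gives the contradiction without any area or Hausdorff-convergence considerations.

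There is a second gap in part~\ref{contact angle Q is 0 part}. You assert that in the vanishing alternatives (A1)--(A2) ``the side $\{\pm\Sigma_1<0\}$ is determined by which of $\chi_0^\pm$ equals $1$, matching~\eqref{contact angles Q is 0}'', but you have not excluded the \emph{wrong} one of (A1)/(A2) from occurring. Concretely, with $\Omega^+(u)$ to the right of $\Gamma(u)$, you must rule out the tangent direction in which $\Omega^-(u)$ contains a wide cone below the curve; the paper does this by applying Oddson's Theorem~\ref{oddson theorem} to $|u_-|$ on that cone, yielding a lower bound $|u|\gtrsim |(x,y)|^{5/4}$ that is incompatible with the $O(r)$ decay of $r^{-2}\int_{B_r^-}|\nabla u|^2$ from~\eqref{vK grad u bound}. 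Without this step, both tangent directions survive your analysis and the conclusion~\eqref{contact angles Q is 0} is not established.
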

\begin{remark}
This result falls short of completely resolving von Kármán's conjecture in two respects. First, we must assume the uniform Lipschitz continuity of the blowup~\eqref{vK blowup sequence Lipschitz bound}, which holds for the limiting solution furnished by Theorem~\ref{limiting gravity current theorem}, but may not hold in general. Also, the first alternative in~\ref{contact angle Q is 0 part} is that the free boundary has a cusp at the contact point, and this is not predicted by von Kármán. On the other hand, the hypotheses of Theorem~\ref{contact angle theorem} are considerably weaker than the assumptions underlying the complex analytic approach taken by von Kármán and others. The contact angle in the non-stagnation case~\ref{contact angle Q not 0 part} was not specifically discussed in~\cite{vonkarman1940engineer}, though it was obtained formally by Chandler and Trinh~\cite{chandler2018complex} by more sophisticated techniques but in the same complex variables vein. 
\end{remark}

\begin{proof}[Proof of Theorem~\ref{contact angle theorem}]
Define the set
\[
	\Theta \colonequals \left\{ \theta \in [-\pi,0] :~ \textrm{there exists $t_m \searrow 0$ with $\arg{\Sigma(t_m)} \to \theta$} \right\},
\]
where the argument $\arg{\Sigma}$ is measured with respect to the positive $x$-axis. As $\Sigma$ is continuous, the intermediate value theorem implies that $\Theta$ is convex.  We claim in fact that, if $Q \neq 0$, then $\Theta \subset \{ 0, -\pi\}$ and if $Q =0$, then $\Theta \subset \{ 0, -\pi/3, -2\pi/3, -\pi \}$; in either situation, convexity means that $\Theta$ is a singleton. The argument is based on \cite[Theorem 4]{weiss2021bernoulli} and quite similar to the proof of Lemma~\ref{flatness lemma}. 

Seeking a contradiction, suppose that there exists $\theta_0 \in \Theta \setminus \{ 0, -\pi\}$ if $Q \neq 0$ or $\theta_0 \in \Theta \setminus \{0,-\pi/3,-2\pi/3, -\pi \}$ if $Q = 0$. Let $t_m \searrow 0$ be given with $\arg{\Sigma(t_m)} \to \theta_0$.  We can therefore find a cone $\mathcal{K}_{\theta_0, \alpha}$ centered on $\{ \theta = \theta_0\}$ and with aperture $0 < 2\alpha \ll 1$ so that $\Sigma(t_m) \subset \mathcal{K}_{\theta_0, \alpha}$ for all $m \gg 1$, and 
\[
	\left\{ \begin{aligned}
		\mathcal{K}_{\theta_0, \alpha} \cap \left(  \{ \theta = 0\} \cup \{ \theta = -\pi\} \right) = \emptyset & \qquad \textrm{if } Q \neq 0 \\
		\mathcal{K}_{\theta_0, \alpha} \cap  \left( \{ \theta = 0\} \cup \{ \theta = -\tfrac{\pi}{3} \} \cup \{ \theta =-\tfrac{2\pi}{3}\} \cup \{ \theta =  -\pi\} \right) = \emptyset & \qquad \textrm{if } Q = 0.
	\end{aligned} \right.
\]
Now, set $r_m \colonequals |\Sigma(t_m)|$, and consider the corresponding blowup sequence $\{u_m\}$ given by~\eqref{blowup sequence}. Possibly passing to a subsequence, we have that $u_m \to u_0$ in $H_\loc^1$ and locally uniformly. In light of Lemma~\ref{vK density lemma}, the only possibilities are that 
\[
	u_0 = \left\{ \begin{aligned}
		\alpha y & \qquad \textrm{if } Q \neq 0 \\
		0,~\Stokesu \circ A_{\tfrac{\pi}{6}}, \textrm{ or } \Stokesu \circ A_{\tfrac{\pi}{6}}^{-1} & \qquad \textrm{if } Q = 0.
	\end{aligned} \right.
\]
Note that we have used the fact that $\Omega^+(u)$ is necessarily connected to rule out alternative~\ref{vK Stokes corner alternative}.
In all cases, then, the interior of $\mathcal{K}_{\theta,\alpha}$ is outside the support of the measure $\Delta u_0$, and hence $\Delta u_m(K) \to 0$ for any $K \subset\subset \mathcal{K}_{\theta_0,\alpha}$. On the other hand, note that the  ball $\tilde B$ of radius $\alpha$ centered at $(\cos{\theta_0}, \sin{\theta_0})$ lies inside $\mathcal{K}_{\theta_0,\alpha}$, and $\tilde B \cap \Gamma(u)$ contains a curve of length at least $2\alpha$. But then, because
\[
	\Delta u_m = |\jump{\rho} (y+Q)|^{\frac{1}{2}} d\mathcal{H}^1 \resmes \Gamma(u_m),
\]
it must be that 
\[
	\Delta u_m(\tilde B) \gtrsim 1 \qquad \textrm{as } m \to \infty, 
\]
a contradiction.

We have therefore shown that if $Q \neq 0$, then $\Theta = \{ 0\}$ or $\{ -\pi \}$; this completes the proof of part~\ref{contact angle Q not 0 part}. If $Q = 0$, the above argument shows that $\Theta = \{ -\pi/3\}$, $\{ -2\pi/3\}$, $\{0\}$, or $\{ -\pi\}$. The first two of these to correspond to alternative~\ref{vK corner alternative} for the blowup, with $u_0 = \Stokesu \circ A_{\pi/6}$ and $u_0 = \Stokesu \circ A_{\pi/6}^{-1}$, respectively. The latter two correspond to alternatives~\ref{vK all heavier alternative} and \ref{vK all lighter alternative}: if $\Omega^+(u)$ is locally to the right of $\Gamma(u)$ near $0$, then alternative~\ref{vK all heavier alternative} implies $\Theta = \{ 0 \}$ and alternative~\ref{vK all lighter alternative} implies $\Theta = \{ -\pi\}$; if $\Omega^+(u)$ is to the left of $\Gamma(u)$ locally, then the reverse is true. For simplicity, suppose that $\Omega^+(u)$ is to the right of $\Gamma(u)$. Then, the final step is simply to exclude the case $\Theta = \{0\}$. Once again we argue by contradiction: suppose $\Theta = \{0\}$. Thus $|\arg \Sigma(t)| < \pi/6$ for $0 < t \ll 1$, and so there exists a truncated $\mathcal{D}$ cone with vertex at $0$ and aperture $144\degree$ lying in the interior of $\Omega^-(u)$. Applying Theorem~\ref{oddson theorem}, we find that on $\mathcal{D}$,
\[
	|u_-(x,y) | \gtrsim |(x,y)|^{\tfrac{5}{4}} \cos{\left(\tfrac{5}{4}(\theta - \tfrac{7\pi}{5}) \right)}, \quad \theta = \arg{(x,y)}.
\]
But, as in the proof of Theorem~\ref{overturning theorem}, this gives a lower bound on the gradient decay
\begin{align*}
	\frac{1}{r^2} \int_{B_r^-} |\nabla u|^2 \, dx \, dy &  \gtrsim r^{\frac{1}{2}},
\end{align*}
which contradicts~\eqref{vK grad u bound}. It follows, then that $\Theta = \{-\pi/3\}$, $\{-2\pi/3\}$, or $\{-\pi\}$. The last of these clearly corresponds to the cusp case in~\eqref{contact angles Q is 0}. If this does not occur, then the interface meets the upper wall at an angle of either $-\pi/3$ or $-2\pi/3$. Which of these occurs is clearly determined by whether $\Omega^-(u)$ lies to the right or the left of $\Gamma(u)$ locally near $0$.
\end{proof}

The proof of Theorems~\ref{non-boussinesq gravity current theorem} and~\ref{boussinesq gravity current theorem} is now an easy corollary of Theorem~\ref{limiting gravity current theorem} and the above contact angle theorem.

\begin{proof}[Proof of Theorems~\ref{non-boussinesq gravity current theorem} and~\ref{boussinesq gravity current theorem}]
As mentioned above, we only give the argument for the depression bore case, as this can be straightforwardly adapted to give the statement in Theorem~\ref{boussinesq gravity current theorem}\ref{boussinesq elevation part}
Assume that overturning does not occur along $\cmd$ in that \eqref{depression no overturning} holds. Then, by Theorem~\ref{limiting gravity current theorem}, there exists a monotone variational solution $(u,Q)$ to the gravity current problem satisfying the uniform Lipschitz constant condition~\eqref{Lipschitz constant decay}. 

As a further consequence of Theorem~\ref{limiting gravity current theorem}, near $0$, the free boundary $\Gamma(u)$ admits the Lipschitz continuous parameterization 
\[
	\Sigma \colon t \in [0,1] \mapsto \left(\frac{t}{N}, \eta\left(\frac{t}{N}\right) \right) \in B_1^- \cup T,
\]
for some $N \gg 1$. Thus, Theorem~\ref{contact angle theorem} implies that $\eta$ is differentiable at $0$. In the non-Boussinesq setting, in which case $Q\neq 0$, by Theorem~\ref{contact angle theorem}\ref{contact angle Q not 0 part}, we must have $\eta^\prime(0) = 0$; this proves Theorem~\ref{non-boussinesq gravity current theorem}. On the other hand, in the Boussinesq case where $Q = 0$, Theorem~\ref{contact angle theorem}\ref{contact angle Q is 0 part} and the monotonicity of $\eta$ imply that $\eta^\prime(0) = -\sqrt{3}$, which completes the proof of Theorem~\ref{boussinesq gravity current theorem}\ref{boussinesq depression part}.
\end{proof}

\section*{Acknowledgments}

The research of RMC is supported in part by the NSF through DMS-2205910. The research of SW is supported in part by the NSF through DMS-2306243, and the Simons Foundation through award 960210. The authors would also like to thank Sunao Murashige for helpful conversations during the writing of this paper.
\bibliographystyle{siam}
\bibliography{projectdescription}

\end{document}